\newtheorem{theorem}{Theorem}
\newtheorem{lemma}[theorem]{Lemma}
\newtheorem{corollary}[theorem]{Corollary}
\theoremstyle{definition}
\theoremstyle{remark}
\newcommand{\blankbox}[2]{%
  \parbox{\columnwidth}{\centering
    \setlength{\fboxsep}{0pt}%
    \fbox{\raisebox{0pt}[#2]{\hspace{#1}}}%
  }%
}
 \def\doublespacing{\parskip 5 pt plus 1 pt \baselineskip 25 pt
      \lineskip 13 pt \normallineskip 13 pt}
 \newcommand{\abs}[1]{\lvert#1\rvert}
 \def\R{{\mathbb{R}}}
 \def\Z{{\mathbb{Z}}}
 \def\Q{{\mathbb{Q}}}
 \def\N{{\mathbb{N}}}
 \def\S{{\Sigma}}
 \def\C{{\mathbb{C}}}
 \def\mcg{{\rm MCG}}
\def\mod{{\rm Mod}}
\def\emod{{\rm Mod}^*}
 \def\GL{{\rm GL}}
 \def\SL{{\rm SL}}
 \def\Sp{{\rm Sp}}
 \def\Im{{\rm Im}}
 \def\PSp{{\rm PSp}}
 \def\G{{\mathcal{G}}}
\def\S{{\mathcal{S}}}
\begin{document}

\newenvironment{prooff}{\medskip \par \noindent {\it Proof}\ }{\hfill
$\square$ \medskip \par}
    \def\sqr#1#2{{\vcenter{\hrule height.#2pt
        \hbox{\vrule width.#2pt height#1pt \kern#1pt
            \vrule width.#2pt}\hrule height.#2pt}}}
    \def\square{\mathchoice\sqr67\sqr67\sqr{2.1}6\sqr{1.5}6}
\def\pf#1{\medskip \par \noindent {\it #1.}\ }
\def\endpf{\hfill $\square$ \medskip \par}
\def\demo#1{\medskip \par \noindent {\it #1.}\ }
\def\enddemo{\medskip \par}
\def\qed{~\hfill$\square$}

 \title[Generating Mapping Class Group By Two Torsion Elements]
{Generating Mapping Class Group By Two Torsion Elements}

\author[O\u{g}uz Y{\i}ld{\i}z]{O\u{g}uz Y{\i}ld{\i}z}
\date{\today}

 \address{Department of Mathematics, Middle East Technical University, 06800
  Ankara, Turkey
  }
 \email{oguzyildiz16@gmail.com}
\begin{abstract}
We prove that the mapping class group of a closed connected orientable surface of genus $g$
is generated by two elements of order $g$ for $g\geq 6$. Moreover, for $g\geq 7$ we found a generating set of two elements, of order $g$ and $g'$ which is the least divisor of $g$ such that $g'>2$.
Furthermore, we proved that it is generated by two elements of order $g/\gcd (g,k)$ for $g\geq 3k^2+4k+1$ and any positive integer $k$.
\end{abstract} 
 \maketitle

\section{Introduction}
The mapping class group $\mod(\Sigma_g)$ of a compact connected orientable 
surface $\Sigma_g$ without boundary is the group of  
orientation--preserving diffeomorphisms of $\Sigma_g\to \Sigma_g$ up to isotopy. This study aims to explore
generation of $\mod(\Sigma_g)$ by two torsion elements of small orders. 

Korkmaz is the first person to find a generating set consisting of two torsions, elements of finite order, for the mapping class group $\mod(\Sigma_g)$. He~\cite{Korkmaz} proved that the group $\mod(\Sigma_g)$ 
is generated by two elements of order $4g+2$. Korkmaz asked in~\cite{Korkmaz2012} the following problem: 
What are the other numbers $k$ (less than $4g+2$) such that $\mod(\Sigma_g)$ is generated by two elements of order $k$. What is the smallest such $k$?

Dan Margalit~\cite{Margalit} asked a quite similar question, too:
For which $k$ can $\mod(\Sigma_g)$ be generated by two elements of order $k$?

We proved on Theorem ~\ref{thm:2} that the mapping class group $\mod(\Sigma_g)$ is generated by 
two elements of order $g$ if $g \geq 6$. So, we showed that $k=g$ satisfies the desired result for $g \geq 6$.
Moreover, we found generating sets of two elements of smaller orders as shown in Theorem ~\ref{thm:1} and Theorem ~\ref{thm:3}.

\begin{theorem} \label{thm:2}
The mapping class group $\mod(\Sigma_g)$ is generated by 
two elements of order g for $g \geq 6$. 
\end{theorem}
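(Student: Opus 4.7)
The plan is to exhibit two explicit order-$g$ mapping classes $F$ and $G$ on $\Sigma_g$ and show that the subgroup $H = \langle F,G\rangle$ is all of $\mod(\Sigma_g)$. The natural source for $F$ is a rotational symmetry: realizing $\Sigma_g$ as a regular $(4g+2)$-gon with standard side identifications, or as a $g$-fold branched cover of $S^2$, produces a diffeomorphism of order $g$ whose action on a standard family of simple closed curves is combinatorially transparent. The element $G$ is then built by a controlled modification of $F$ --- for instance a conjugate $G = T F T^{-1}$ by some Dehn twist $T$, or a product $F\cdot S$ where $S$ is a product of twists along a chain chosen so that $(FS)^g = 1$ follows from a chain or hyperelliptic-type relation. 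The first concrete thing to verify is that such a $G$ exists, has order exactly $g$, and has its axis of symmetry positioned compatibly with that of $F$.

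The second step is to extract individual Dehn twists from $H$. The usual move is to compute a short word $W(F,G)$ --- often $F^{-1}G$, $FG^{-1}$, or $[F,G]$ --- and show it equals a single Dehn twist $T_c$ about an explicit simple closed curve $c$. Conjugation by the powers of $F$ then yields the $g$ Dehn twists $T_{F^i(c)}$ already inside $H$, and one or two further short-word tricks should produce a second cyclic family of twists. The geometric fact to pin down at this stage is exactly which isotopy classes of curves appear in the $F$-orbit of $c$, since the rest of the argument is driven by this explicit picture.

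The final step is to invoke a known finite Dehn-twist generating set of $\mod(\Sigma_g)$ --- Humphries's $2g+1$ twists, Wajnryb's set, or one of Korkmaz's smaller chains --- and write each of its members as a product of twists that lie in $H$. This typically uses braid and lantern relations among the twists produced in the previous step, together with the fact that conjugation by $F$ cyclically permutes them.

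The main obstacle I anticipate is precisely this combinatorial bookkeeping: the cyclic $F$-orbits and their compositions must contain a chain of consecutive non-separating curves long enough to cover a Humphries-type generating set. The hypothesis $g \geq 6$ should enter here, because only for $g$ sufficiently large does the orbit of a single curve under an order-$g$ rotation reach enough non-isotopic positions along such a chain, and only then do the lantern or chain relations used to assemble the standard generators fit on $\Sigma_g$ in the required configuration; for $g \leq 5$ either the orbit is too short or the auxiliary relations degenerate, consistent with the excluded range.
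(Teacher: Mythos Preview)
Your high-level strategy matches the paper's: take the rotation $R$ of order $g$ as the first generator, and take the second to be $R$ multiplied by a carefully chosen product of Dehn twists. But two concrete pieces of your plan are off, and they are exactly where the work lies.

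First, the mechanism you propose for forcing the second element to have order $g$ --- a chain or hyperelliptic-type relation --- is not what is used, and it is unclear it would ever produce an element of order exactly $g$. The paper's device is much simpler: if $R$ has order $g$ and $x,y$ satisfy $RxR^{-1}=y$, then $Rxy^{-1}=yRy^{-1}$ is conjugate to $R$, hence also of order $g$. So one takes $x=T_{d_1}T_{d_2}T_{d_3}$ for three disjoint curves and sets $y=R x R^{-1}$; the second generator is then $R\,x\,y^{-1}$, automatically of order $g$ with no relation needed. You should replace your order argument with this observation.

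Second, your extraction step assumes that a short word like $F^{-1}G$ will equal a \emph{single} Dehn twist. It does not: here $R^{-1}(Rxy^{-1})=xy^{-1}$ is a product of three positive and three negative twists about six disjoint curves (for $g\ge 9$, the paper uses $C_1B_4A_7A_8^{-1}B_5^{-1}C_2^{-1}$). The substantial part of the argument is a sequence of conjugations by powers of $R$ and by this six-twist word, each time checking where the six curves are sent, to successively isolate elements of the form $B_{i+1}C_i^{-1}$, $B_iA_i^{-1}$, $B_iC_i^{-1}$. Once these are in hand, one appeals not to Humphries directly but to Korkmaz's four-element generating set $\{R,\,A_1B_1^{-1},\,B_1C_1^{-1},\,C_1B_2^{-1}\}$, which is tailored to exactly these twist-ratios. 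The hypothesis $g\ge 6$ enters because the six curves in $xy^{-1}$ must be disjoint and sit in distinct handles; the paper treats $g=6,7,8$ by separate ad hoc choices and $g\ge 9$ uniformly. Your plan needs this explicit curve bookkeeping, which is the actual content of the proof.
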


\begin{theorem} \label{thm:1}
For $g\geq 7$ the mapping class group $\mod(\Sigma_g)$ is generated by 
two elements of order $g$ and order $g'$ where $g'$ is the least divisor of $g$ such that $g'>2$. 
\end{theorem}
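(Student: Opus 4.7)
The plan is to leverage the construction of Theorem~\ref{thm:2} as directly as possible. First, observe that when $g$ is prime, the only divisor of $g$ greater than $2$ is $g$ itself, so $g' = g$ and the statement is already contained in Theorem~\ref{thm:2}. I may therefore assume $g$ is composite and $g'$ is a proper divisor of $g$ with $g' \geq 3$ and $g/g' \geq 2$. The proof of Theorem~\ref{thm:2} produces two order-$g$ elements $A$ and $B$ generating $\mod(\Sigma_g)$, where $A$ is a periodic mapping class of order $g$ coming from a rotationally symmetric model of $\Sigma_g$, and $B$ is built from $A$ together with an auxiliary Dehn twist. The strategy is to keep $A$ and replace $B$ by a new element $B'$ of order $g'$ without losing the generation property.

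To construct $B'$, I would exploit the natural periodic mapping class $\rho$ of order $g'$ obtained from a $g'$-fold symmetric embedding of $\Sigma_g$: group the $g$ handles into $g'$ equal clusters of $g/g'$ handles each and rotate by $2\pi/g'$. The candidate $B'$ would be either $\rho$ itself or $\rho$ composed with a suitable Dehn twist $t_c$ along a curve $c$ arranged so that the product still has order exactly $g'$. The design principle is that $B'$ should share with the original $B$ enough of its action on a distinguished chain of curves $c_1, \dots, c_{2g+1}$ that the same short words in $A$ and $B'$ produce the same Dehn twists as the short words in $A$ and $B$ did before.

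Once $B'$ is in hand, I would prove $\langle A, B' \rangle = \mod(\Sigma_g)$ by the same route used for Theorem~\ref{thm:2}: first extract a single Dehn twist $t_{c_1}$ about a nonseparating curve from a short word in $A$ and $B'$ (typically a commutator), then conjugate repeatedly by $A$ to recover $t_{c_i}$ for every curve $c_i$ of a chain of $2g+1$ curves, and finally invoke Humphries' theorem to conclude that these Dehn twists already generate $\mod(\Sigma_g)$.

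The main obstacle is the first step of this last argument: producing a single Dehn twist from words in $A$ and $B'$. In the order-$g$ case of Theorem~\ref{thm:2}, the element $B$ has rich orbits on curves and a short commutator with $A$ isolates a twist almost by construction; with $B'$ of order $g'$ (possibly as small as $3$ or $4$), the orbits of $B'$ on the chain are smaller and the action is less transitive, so $B'$ must be chosen so that an analogous commutator $[A^i, (B')^j]$ still equals a Dehn twist, or an easily manipulable product thereof. Carrying this out uniformly for all $g \geq 7$, and especially for small composite cases such as $g = 8$ with $g' = 4$, will constitute the bulk of the verification.
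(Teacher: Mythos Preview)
Your proposal has a genuine gap: it misses the concrete mechanism by which the paper produces an element of order $g'$, and the substitute you sketch is both vague and unlikely to mesh with the rest of the argument.

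The paper does \emph{not} introduce a second periodic map $\rho$ coming from a different symmetric model of $\Sigma_g$. Both generators are built from the \emph{same} rotation $R$. The first generator is $R$ (order $g$). The second has the form $R^{k}F$, where $k=g/g'$ and $F$ is a product of commuting Dehn twists about curves $d_1,\dots,d_m$ and their inverses about $R^{k}(d_1),\dots,R^{k}(d_m)$. The key observation (Lemma~\ref{lem:order}) is elementary: if $R^{k}xR^{-k}=y$, then $R^{k}xy^{-1}$ is conjugate to $R^{k}$, so it has order exactly $g/\gcd(g,k)=g'$. Thus the order-$g'$ element is obtained essentially for free once the twist word $F$ is chosen; no second periodic model is needed, and there is no delicate order computation for a product $\rho\, t_c$.

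Your proposed generation argument is also off-track. The paper never isolates a single Dehn twist via a commutator and then conjugates around a chain. Instead, each of the case-by-case corollaries shows that the subgroup $\langle R,F\rangle$ contains the elements $A_iB_i^{-1}$, $B_iC_i^{-1}$, $C_iB_{i+1}^{-1}$ for all $i$, by repeatedly conjugating $F$ by powers of $R$ and tracking how the resulting twist words act on small tuples of curves; one then appeals to Corollary~\ref{cor:thmKorkmaz} (ultimately to Korkmaz's four-element generating set) rather than to Humphries directly. The bulk of the work is the careful choice of $F$ for each residue class of $k$ (and for small $g$ separately), not a uniform commutator trick. Your plan to make a commutator $[A^i,(B')^j]$ equal a single twist, with $B'$ coming from an unrelated $g'$-fold symmetry, gives you no handle on why such an identity should hold.
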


\begin{theorem} \label{thm:3}
For $g\geq 3k^2+4k+1$ and any positive integer $k$, the mapping class group $\mod(\Sigma_g)$ is generated by 
two elements of order $g/$$\gcd(g,k)$.
\end{theorem}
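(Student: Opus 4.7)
The plan is to follow the blueprint of Theorems~\ref{thm:2} and~\ref{thm:1}: construct a periodic homeomorphism $R$ of $\Sigma_g$ of the prescribed order $n := g/\gcd(g,k)$, then produce a second element $S$ of the same order such that $\langle R, S\rangle$ contains Dehn twists about a Humphries generating system for $\mod(\Sigma_g)$. Write $d = \gcd(g,k)$, so that $n = g/d$. A natural realisation of $R$ is obtained by embedding $\Sigma_g$ in $\mathbb{R}^3$ as the boundary of a regular neighbourhood of a graph invariant under rotation by $2\pi/n$ about the $z$-axis, chosen so that the quotient orbifold $\Sigma_g/\langle R\rangle$ has the correct genus and branch data dictated by the Riemann--Hurwitz formula; the rotation then lifts to an order-$n$ diffeomorphism $R$ of $\Sigma_g$.

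Next I would select a chain $a_0, a_1, \ldots, a_{2g}$ of simple closed curves on $\Sigma_g$ whose Dehn twists generate $\mod(\Sigma_g)$ (together with at most one further twist, as in Humphries's theorem), arranged so that $R$ cyclically shifts the chain by a step of~$k$. The second generator $S$ is taken to be a conjugate $W R W^{-1}$ for a short word $W$ in Dehn twists, chosen so that a short product of $R$ and $S$, such as $R^{-1}S$ or $[R,S]$, equals the single twist $t_{a_0}$; since $S$ is conjugate to $R$ it automatically has order $n$. Once $t_{a_0}$ lies in $\langle R,S\rangle$, conjugation by powers of $R$ produces Dehn twists about every $R$-image of $a_0$, which sweeps out one orbit on the chain.

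The principal obstacle is the bookkeeping required to hit \emph{every} curve $a_i$. When $d=1$ the single $R$-orbit of $a_0$ already realises a full Humphries chain and the argument closes immediately. When $d>1$, however, the orbit visits only $n = g/d$ of the $2g+1$ necessary curves, so $S$ must be engineered to translate $a_0$ across each of the $d$ residue classes while retaining the property that it produces $t_{a_0}$ in combination with $R$. Ensuring that $d$ such mutually compatible transversal shifts coexist on $\Sigma_g$, together with the space needed for the chain to fit inside a fundamental domain for $R$, is precisely the combinatorial constraint that forces the quadratic bound $g \geq 3k^2+4k+1 = (3k+1)(k+1)$: roughly, a fundamental domain of genus on the order of $d$ must accommodate $O(k)$ successive Humphries arcs, and the displayed inequality is what is needed to guarantee this. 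Once all twists $t_{a_i}$ have been obtained, Humphries's theorem delivers $\langle R,S\rangle = \mod(\Sigma_g)$.
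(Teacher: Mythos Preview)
Your sketch has the right spirit but misses the actual mechanism, and several steps are genuine gaps rather than routine details.

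First, the paper does \emph{not} construct a new periodic diffeomorphism of order $n=g/\gcd(g,k)$. It simply takes $R^k$, the $k$-th power of the standard order-$g$ rotation already used throughout; this has order $g/\gcd(g,k)$ automatically. Your proposal to build $R$ via Riemann--Hurwitz on an ad hoc quotient orbifold is unnecessary and, more seriously, you never verify that such an $R$ permutes any useful curve system. The paper's $R^k$ visibly permutes the curves $a_i,b_i,c_i$ by $i\mapsto i+k$, which is exactly what drives the argument.

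Second, the heart of your plan---arranging $S=WRW^{-1}$ so that $R^{-1}S$ or $[R,S]$ is a \emph{single} Dehn twist---is unsupported and does not match what actually works. The paper never isolates a single twist. Instead the second generator has the form $R^k\cdot F\cdot(R^kFR^{-k})^{-1}$, where $F$ is an explicit product of $3k$ pairwise-disjoint Dehn twists $B_1\cdots B_k\,A_{2k+1}\cdots A_{k^2+2k}\,C_{k^2+3k+1}\cdots C_{2k^2+3k}$; by Lemma~\ref{lem:order} this element also has order $g/\gcd(g,k)$. One then manipulates conjugates of $F_1=F(R^kF^{-1}R^{-k})$ by powers of $R^k$ to produce \emph{twist ratios} $A_iB_i^{-1}$, $C_iB_i^{-1}$, $C_iB_{i+1}^{-1}$, and finishes via Corollary~\ref{generating} (the Korkmaz reduction), not via Humphries's generators directly.

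Finally, your heuristic for the bound $g\ge 3k^2+4k+1$ (``a fundamental domain of genus on the order of $d$ must accommodate $O(k)$ arcs'') is not the real reason. The bound arises because the indices appearing in $F$ run up to $2k^2+3k$, and the disjointness/braid computations in the proof of Theorem~\ref{generalization} (illustrated for $k=2$ in Theorem~\ref{thm:r2}) require shifting by further multiples of $k$; the inequality $g\ge(3k+1)(k+1)$ is precisely what prevents these shifted curve families from colliding. Without the explicit $F$ and this index analysis, your bound is just an unexplained coincidence.
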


Since there is a surjective 
homomorphism from $\mod(\Sigma_g)$ onto the symplectic group 
$\Sp (2g,\Z)$, 
we have the following immediate result: 

\begin{corollary} 
The symplectic group $\Sp(2g,\Z)$ is generated 
by two elements of order g for $g\geq 6$.
\end{corollary}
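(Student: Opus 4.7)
The plan is to derive this corollary directly from Theorem~\ref{thm:2} via the standard symplectic representation. The action of $\mod(\Sigma_g)$ on $H_1(\Sigma_g,\Z)\cong\Z^{2g}$ preserves the algebraic intersection form, yielding a homomorphism $\rho\colon\mod(\Sigma_g)\to\Sp(2g,\Z)$ that is well known to be surjective (its image already contains the symplectic transvections coming from Dehn twists about nonseparating curves, and these generate $\Sp(2g,\Z)$).

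First I would take the pair of generators $a,b\in\mod(\Sigma_g)$ of order $g$ produced by Theorem~\ref{thm:2}, which is available since $g\geq 6$. Because surjective homomorphisms send generating sets to generating sets, the images $\rho(a)$ and $\rho(b)$ generate $\Sp(2g,\Z)$. The one point that needs care is to verify that these images still have order exactly $g$, and not merely a proper divisor of $g$.

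For this I would invoke the classical fact that the Torelli group $\mathcal{I}_g:=\ker\rho$ is torsion-free: every nontrivial finite-order mapping class acts nontrivially on $H_1(\Sigma_g,\Z)$ when $g\geq 1$. Hence if $\rho(a)^k=1$ for some $0<k<g$, then $a^k$ would be a nontrivial torsion element of $\mathcal{I}_g$, a contradiction, and similarly for $b$. Therefore $\rho(a)$ and $\rho(b)$ each have order exactly $g$, and together they generate $\Sp(2g,\Z)$. I do not anticipate any real obstacle here, since both ingredients---surjectivity of $\rho$ and torsion-freeness of $\mathcal{I}_g$---are standard, so the argument is a formal consequence of Theorem~\ref{thm:2}.
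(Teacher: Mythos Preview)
Your proposal is correct and follows the same route as the paper, which simply cites the surjection $\mod(\Sigma_g)\twoheadrightarrow\Sp(2g,\Z)$ and declares the corollary immediate from Theorem~\ref{thm:2}. You are in fact more careful than the paper: you explicitly justify that the orders of the two generators do not drop under $\rho$ by invoking the torsion-freeness of the Torelli group, a point the paper leaves implicit.
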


Firstly, Dehn~\cite{Dehn}
showed that $\mod(\Sigma_g)$ is generated by $2g(g-1)$ many Dehn twists. Lickorish~\cite{Lickorish} 
decreased this number to $3g-1$.
Humphries~\cite{Humphries} introduced a generating set of $2g+1$ many Dehn twists and proved that this is the least such number.
Lu~\cite{Lu} found a generating set of three elements and two of the generators are of finite order. 
  
Maclachlan~\cite{Maclachlan} proved that $\mod(\Sigma_g)$ can be generated by only using torsions. McCarthy and Papadopoulos~\cite{McCarthyPapadopoulos} showed
$\mod(\Sigma_g)$ can be generated by involutions. Stukow~\cite{Stukow2}
proved index five subgroup of $\mod(\Sigma_2)$ is generated by using involutions.
Luo~\cite{Luo} found an upper bound that is necessary to generate $\mod(\Sigma_g)$ by involutions.
After that, Brendle and Farb~\cite{BrendleFarb} showed that six involutions are enough. 
Kassabov~\cite{Kassabov} decreased this to $4$ for $g\geq 7$ . 
Recently, Korkmaz~\cite{Korkmaz3inv} introduced a generating set consisting of three involutions for $g\geq 8$ and of four involutions for $g\geq 3$.
Finally, we ~\cite{Yildiz} showed that $\mod(\Sigma_g)$ is generated by three involutions if $g\geq 6$.
Since the mapping class group cannot be generated by two involutions for homological reasons, three was the least possible such number. Now, we are interested in
generating $\mod(\Sigma_g)$ by two torsion elements of small orders. 

Wajnryb~\cite{Wajnryb1983} found a presentation for the mapping class group of an orientable surface.
Wajnryb~\cite{Wajnryb1996} also proved that
$\mod(\Sigma_g)$ can be generated by two elements; one is of order $4g+2$ and the other
is a product of opposite Dehn twists. 
After that, Korkmaz~\cite{Korkmaz} showed that $\mod(\Sigma_g)$ 
is generated by an element of order $4g+2$ and a Dehn twist, improving Wajnryb's result.
He also proved that $\mod(\Sigma_g)$ can be generated by two torsion elements of order $4g+2$. 
Recently, Baykur and Korkmaz~\cite{Baykur} proved that the mapping class group can be generated by two commutators.

Moreover, Monden~\cite{Monden} proved that $\mod(\Sigma_g)$ can be generated by three elements of order $3$ and by four elements of order $4$. 
Lanier~\cite{Lanier} showed $\mod(\Sigma_g)$ is generated by three elements of any order $k\geq 6$ if $g\geq (k-1)^2+1$.
See ~\cite{Du2} for generating set of torsion elements for the extended mapping class group.
%

\bigskip

\section{An overview of mapping class groups}

Throughout the paper only closed connected orientable surfaces, $\Sigma_g$,  
where all genera are distributed as in Figure ~\ref{fig1} are considered. Note that the rotation $R$ by $2\pi$$/g$ degrees about $z$-axis is a well-defined
self-diffeomorphism of $\Sigma_g$. The mapping class group $\mod(\Sigma_g)$ of a closed connected orientable 
surface $\Sigma_g$ is the group of orientation--preserving diffeomorphisms of $\Sigma_g\to \Sigma_g$ up to isotopy.
Diffeomorphisms and curves are classified up to isotopy. 
We refer to~\cite{FarbMargalit} for all other information on the mapping class groups.
We only deal with simple types of simple closed curves 
$a_i$'s, $b_i$'s and $c_i$'s as shown on Figure ~\ref{fig1} where $1 \leq i \leq g$.
In order to align with the notation in ~\cite{Yildiz}, 
we show simple closed curves by lowercase letters $a_i$, $b_i$, $c_i$
and corresponding positive Dehn twists by uppercase letters $A_i$, $B_i$, $C_i$ or the usual notation $t_{a_i}$,$t_{b_i}$,$t_{c_i}$, respectively.
All indices are to be considered as modulo $g$.
For the composition of diffeomorphisms, 
$f_1f_2$ means that $f_2$ is first and then $f_1$ comes second as usual.

Commutativity, Braid Relation and the following basic facts on the mapping class group are used along the paper for many times:
For any simple closed curves $c_1$ and $c_2$ on $\Sigma_g$ and diffeomorphism $f:\Sigma_g \to \Sigma_g$, 
$ft_{c_1}f^{-1}=t_{f(c_1)}$; $c_1$ is isotopic to $c_2$ if and only if $t_{c_1}=t_{c_2}$ in $\mod(\Sigma_g)$; and if $c_1$ and $c_2$ are disjoint, then $t_{c_1}(c_2)=c_2$.

After Dehn and Lickorish proved the mapping class group $\mod(\Sigma_g)$ is generated by $3g-1$ Dehn twists about nonseparating
simple closed curves,  Humphries~\cite{Humphries} stated the following theorem.
 
\begin{theorem} {\rm\bf(Dehn-Lickorish-Humphries)}\label{thm:Humphries}
The mapping class group $\mod(\Sigma_g)$ is generated by the set
$\{ A_1,A_2,B_1,B_2,\ldots ,B_g,C_1,C_2,\ldots ,C_{g-1}\}.$
\end{theorem}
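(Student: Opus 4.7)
The plan is to derive Humphries' theorem from the Dehn--Lickorish theorem, which asserts that $\mod(\Sigma_g)$ is generated by the $3g-1$ Dehn twists $A_1, \ldots, A_g, B_1, \ldots, B_g, C_1, \ldots, C_{g-1}$. Writing $H$ for the subgroup of $\mod(\Sigma_g)$ generated by the Humphries set, since $H$ already contains $A_1, A_2$, every $B_j$, and every $C_j$, it suffices to prove that $A_i \in H$ for each $i \geq 3$. By the conjugation formula $f t_c f^{-1} = t_{f(c)}$, this reduces to exhibiting, for each such $i$, a mapping class $f_i \in H$ with $f_i(a_2) = a_i$.

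The key tool is the braid relation: whenever two nonseparating simple closed curves $c$ and $c'$ meet in exactly one point, $t_c t_{c'} t_c^{-1} = t_{t_c(c')}$, so applying any element of $H$ to a curve whose Dehn twist already lies in $H$ produces another Dehn twist in $H$. I would then proceed by induction on $i$: assuming $A_2, A_3, \ldots, A_i$ already belong to $H$, construct a short word in the already-known Humphries generators that sends $a_i$ to $a_{i+1}$. The chain $a_i, c_i, a_{i+1}$ consists of pairwise once-intersecting curves, so a suitable composition of $B_{i+1}$, $C_i$ and the previously obtained twists slides $a_i$ across the $(i+1)$-st handle into the position of $a_{i+1}$.

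The main obstacle is the geometric verification that the chosen word in $H$ really does realize the isotopy $a_i \mapsto a_{i+1}$ on $\Sigma_g$. This is a direct but delicate computation involving the action of a few Dehn twists on a local configuration near the $i$-th and $(i+1)$-st handles. The most transparent organization is to isolate a subsurface of small genus containing the relevant curves $a_i, a_{i+1}, b_i, b_{i+1}, c_i$ and track the Dehn twist action on this finite collection of isotopy classes explicitly. Alternatively, one can reformulate the induction as connectivity of the graph whose vertices are isotopy classes of nonseparating simple closed curves and whose edges join curves meeting in one point, and then propagate membership in $H$ along geodesics in this graph starting from the Humphries curves. This combinatorial step is precisely the technical heart of Humphries' original argument.
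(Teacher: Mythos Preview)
The paper does not supply a proof of this theorem: it is quoted as a classical result of Dehn, Lickorish, and Humphries, with a reference to \cite{Humphries}, and is used only as a black box input to Theorem~\ref{thm:thmKorkmaz} and its corollaries. So there is no ``paper's own proof'' to compare against.

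Your outline is nonetheless the standard argument and is essentially correct: starting from the Dehn--Lickorish $3g-1$ generators, one shows each $A_i$ with $i\ge 3$ lies in $H$ by producing $f_i\in H$ with $f_i(a_2)=a_i$ and invoking $f t_c f^{-1}=t_{f(c)}$. One inaccuracy worth flagging: in the curve system of Figure~\ref{fig1}, the curves $a_i$ and $c_i$ are \emph{disjoint}, so ``the chain $a_i, c_i, a_{i+1}$ consists of pairwise once-intersecting curves'' is false here. The relevant chain is $a_i, b_i, c_i, b_{i+1}, a_{i+1}$, with consecutive members intersecting once; the sliding map carrying $a_i$ to $a_{i+1}$ is then a word in $B_i, C_i, B_{i+1}$ alone (no need for the inductively obtained $A_i$). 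With that correction, the inductive step goes through and your sketch becomes a faithful summary of Humphries' original proof.
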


Let $R$ denote the rotation by $2\pi/g$ about the $x$--axis represented in Figure~\ref{fig1}. 
Then, \mbox{$R(a_k)=a_{k+1}$,} $R(b_k)=b_{k+1}$ and
$R(c_k)=c_{k+1}$. 
Korkmaz~\cite{Korkmaz3inv} showed on the following handy theorem deducing from Theorem~\ref{thm:Humphries} that the mapping class group is generated the four elements. First element is the rotation element $R$
and others are products of one positive and one negative Dehn twists.
\begin{theorem} \label{thm:thmKorkmaz} 
If $g\geq 3$, then the mapping class group $\mod(\Sigma_g)$ is generated by the four elements
\(
R, A_1A_2^{-1},B_1B_2^{-1}, C_1C_2^{-1}. \)
\end{theorem}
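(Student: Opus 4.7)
The plan is to show that the subgroup $H := \langle R,\, A_1A_2^{-1},\, B_1B_2^{-1},\, C_1C_2^{-1}\rangle$ of $\mod(\Sigma_g)$ contains every element of the Humphries generating set $\{A_1, A_2, B_1, \ldots, B_g, C_1, \ldots, C_{g-1}\}$; Theorem~\ref{thm:Humphries} will then force $H = \mod(\Sigma_g)$. The first step will exploit the cyclic symmetry: because $R(a_k)=a_{k+1}$, $R(b_k)=b_{k+1}$, $R(c_k)=c_{k+1}$, conjugation of the three given difference generators by powers of $R$ places each of
\[
A_iA_{i+1}^{-1},\quad B_iB_{i+1}^{-1},\quad C_iC_{i+1}^{-1} \in H \qquad (i\in\Z/g\Z),
\]
and, since the $a_i$'s (resp.\ $b_i$'s, and $c_i$'s for non-adjacent indices) are pairwise disjoint, telescoping gives $A_iA_j^{-1}$, $B_iB_j^{-1}$, and $C_iC_j^{-1}$ in $H$ for all pairs $i,j$.

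The technical heart of the argument is extracting a single Dehn twist from these "difference" elements. Here the two clean consequences of the braid relation $ABA=BAB$ (valid whenever $i(a,b)=1$), namely $[A,B]=B^{-1}A$ and $[A^{-1},B^{-1}]=BA^{-1}$, are the main tools. Using the disjointness pattern among the Humphries curves, a direct calculation yields
\[
[A_iA_{i+1}^{-1},\,B_iB_{i+1}^{-1}] \;=\; B_i^{-1}A_i\cdot B_{i+1}A_{i+1}^{-1}\;\in H,
\]
together with analogous expressions for the mixed-index commutators $[X_i, Y_{i\pm 1}]$ and $[Y_i, Z_{i\pm 1}]$ (writing $X_i, Y_i, Z_i$ for the $R$-rotates of our original three generators). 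By combining these, their $R$-conjugates, and repeated braid substitutions such as $B^{-1}AB = ABA^{-1}$, I would distill an individual Dehn twist, say $A_1$, into $H$.

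Once $A_1\in H$, the remaining Humphries generators follow quickly. First, $A_2 = (A_1A_2^{-1})^{-1}A_1\in H$, and every $A_i = R^{i-1}A_1 R^{-(i-1)}\in H$. Since $A_1$ commutes with $B_2$ (disjoint curves), the identity $[A_1, B_1B_2^{-1}]=[A_1,B_1]=B_1^{-1}A_1$ lies in $H$, whence $B_1^{-1} = (B_1^{-1}A_1)A_1^{-1}\in H$; hence $B_1\in H$, and $B_i = R^{i-1}B_1R^{-(i-1)}\in H$ for all $i$. Since $b_1$ is disjoint from $c_2$, the same trick applied to $[B_1, C_1C_2^{-1}]=[B_1,C_1]=C_1^{-1}B_1$ places $C_1$ and then every $C_i$ in $H$.

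The hard part is the second paragraph. Each naive conjugation of one difference element by another produces Dehn twists about brand-new auxiliary curves that are not \emph{a priori} in $H$, so the cancellations must be engineered with care; the $C$-differences are essential because they provide the only non-commutativity that couples adjacent handles beyond the intra-handle $A$-$B$ braiding. Orchestrating the cascade of braid substitutions so that all extraneous twists cancel in pairs, leaving a single Dehn twist behind, is the combinatorial crux of the proof.
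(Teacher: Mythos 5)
Your first and third paragraphs are fine and agree with the standard route: conjugating the three differences by powers of $R$ and telescoping puts every $A_iA_j^{-1}$, $B_iB_j^{-1}$, $C_iC_j^{-1}$ in $H$, and once a single twist such as $A_1$ is known to lie in $H$, your identities $[A_1,B_1B_2^{-1}]=[A_1,B_1]=B_1^{-1}A_1$ and $[B_1,C_1C_2^{-1}]=C_1^{-1}B_1$ do recover the whole Humphries set, so Theorem~\ref{thm:Humphries} finishes. The genuine gap is the middle step, and it is not merely ``hard'': the mechanism you propose (braid and disjointness identities applied to the difference elements, plus $R$-conjugation) provably cannot produce a single Dehn twist. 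Your sample computation already shows this in miniature: since index-$1$ and index-$2$ twists commute, $[A_1A_2^{-1},B_1B_2^{-1}]=(B_1^{-1}A_1)(B_2A_2^{-1})=(B_1^{-1}B_2)(A_1A_2^{-1})$, i.e.\ just another product of two differences, so no progress is made. More decisively, any manipulation that uses only braid relations, commutation of disjoint twists, and conjugation by the rotation lifts to the Artin group of the intersection graph of the curves $a_i,b_i,c_i$ (extended by the graph rotation); that graph is connected, so in the abelianization all twist generators have the same image, every difference $t_xt_y^{-1}$ maps to $0$, and a single twist maps to a generator of $\Z$. Hence no cascade of braid substitutions starting from $R$ and the differences can cancel down to one Dehn twist. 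A symptom of the same problem: your argument nowhere uses $g\geq 3$, yet the statement fails for $g=2$, where $H_1(\mod(\Sigma_2))\cong\Z/10$ kills all differences and the image of the order-two rotation generates at most an order-two subgroup.

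The missing ingredient is the lantern relation, which is exactly what Korkmaz's proof (cited by the paper and sketched just before Theorem~\ref{thm:thmKorkmaz2}) uses at this point. One first uses the difference elements as homeomorphisms carrying pairs of curves: $A_1A_2^{-1}B_1B_2^{-1}$ sends $(a_1,a_3)$ to $(b_1,a_3)$, so conjugating $A_1A_3^{-1}$ by it gives $B_1A_3^{-1}\in H$; then $B_1A_3^{-1}C_1C_2^{-1}$ sends $(b_1,a_3)$ to $(c_1,a_3)$, giving $C_1A_3^{-1}\in H$. Then a four-holed sphere is embedded in $\Sigma_g$ so that all seven curves of the lantern relation are nonseparating (this is where $g\geq 3$ is genuinely needed), and the relation is rearranged to express the single twist $A_3$ as a product of three elements of the form $t_xt_y^{-1}$, each conjugate into $H$ by elements already shown to lie in $H$. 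After that, $A_i=(A_iA_3^{-1})A_3$, $B_i=(B_iB_1^{-1})(B_1A_3^{-1})A_3$, $C_i=(C_iC_1^{-1})(C_1A_3^{-1})A_3$ complete the proof. To repair your write-up, replace the ``cascade of braid substitutions'' by this lantern step (or some other relation that trivializes $H_1$ of the mapping class group); braid relations alone cannot do it.
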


The next result is easily deduced from Theorem ~\ref{thm:thmKorkmaz}.

\begin{corollary} \label{cor:thmKorkmaz} 
If $g\geq 3$, then the mapping class group $\mod(\Sigma_g)$ is generated by the four elements
\(
R, A_1B_1^{-1}, B_1C_1^{-1}, C_1B_2^{-1}. \)
\end{corollary}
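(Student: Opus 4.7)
The plan is to show that the subgroup $G$ generated by $R, A_1B_1^{-1}, B_1C_1^{-1}, C_1B_2^{-1}$ contains the four generators of Theorem~\ref{thm:thmKorkmaz}, namely $R, A_1A_2^{-1}, B_1B_2^{-1}, C_1C_2^{-1}$. Once this is done we conclude $G = \mod(\Sigma_g)$ immediately. The only tool needed beyond basic group operations is the action of $R$: since $R(a_k)=a_{k+1}$, $R(b_k)=b_{k+1}$ and $R(c_k)=c_{k+1}$, we have $RA_kR^{-1}=A_{k+1}$, $RB_kR^{-1}=B_{k+1}$, $RC_kR^{-1}=C_{k+1}$, so by conjugating the given generators by powers of $R$ I obtain $A_iB_i^{-1}, B_iC_i^{-1}, C_iB_{i+1}^{-1} \in G$ for every $i$.

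Next I exploit telescoping products in the group, which require no commutativity whatsoever:
\[
(B_1C_1^{-1})(C_1B_2^{-1}) \;=\; B_1B_2^{-1}, \qquad (C_1B_2^{-1})(B_2C_2^{-1}) \;=\; C_1C_2^{-1}.
\]
Since $B_2C_2^{-1} = R(B_1C_1^{-1})R^{-1}$ is already in $G$, this places both $B_1B_2^{-1}$ and $C_1C_2^{-1}$ in $G$. Thus three of the four desired generators are in hand at essentially no cost.

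The remaining element $A_1A_2^{-1}$ is obtained by the same telescoping trick, now chained through the $B$-curves:
\[
(A_1B_1^{-1})\,(B_1B_2^{-1})\,(B_2A_2^{-1}) \;=\; A_1A_2^{-1},
\]
where $B_2A_2^{-1} = \bigl(R(A_1B_1^{-1})R^{-1}\bigr)^{-1}$ lies in $G$ and $B_1B_2^{-1}$ was produced in the previous step. I expect no serious obstacle here: the argument is purely formal cancellation in the group, and does not rely on any specific relation between the Dehn twists, only on the rotational symmetry encoded by $R$ and on Theorem~\ref{thm:thmKorkmaz} to close the argument.
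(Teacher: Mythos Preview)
Your proof is correct and essentially identical to the paper's own argument: both conjugate the given generators by $R$ to obtain $B_2C_2^{-1}$ and $B_2A_2^{-1}$, then use the same telescoping products $(B_1C_1^{-1})(C_1B_2^{-1})=B_1B_2^{-1}$, $(C_1B_2^{-1})(B_2C_2^{-1})=C_1C_2^{-1}$, and $(A_1B_1^{-1})(B_1B_2^{-1})(B_2A_2^{-1})=A_1A_2^{-1}$ before invoking Theorem~\ref{thm:thmKorkmaz}.
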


\begin{proof}

Let $H$ be the subgroup of $\mod(\Sigma_g)$ generated by the set\\
$\{ R, A_1B_1^{-1}, B_1C_1^{-1}, C_1B_2^{-1}.\}$.

Then, $B_2A_2^{-1} = R(B_1A_1^{-1})R^{-1}$$\in$$H$ and 
$B_2C_2^{-1} = R(B_1C_1^{-1})R^{-1}\in$$H$.
$B_1B_2^{-1}=(B_1C_1^{-1})(C_1B_2^{-1})$$\in$$H$,
$C_1C_2^{-1}=(C_1B_2^{-1})(B_2C_2^{-1})$$\in$$H$ and
$A_1A_2^{-1}=(A_1B_1^{-1})(B_1B_2^{-1})(B_2A_2^{-1})$$\in$$H$.

It follows from  Theorem~\ref{thm:thmKorkmaz} that
$H=\mod(\Sigma_g)$, completing the proof of the corollary.
\end{proof}

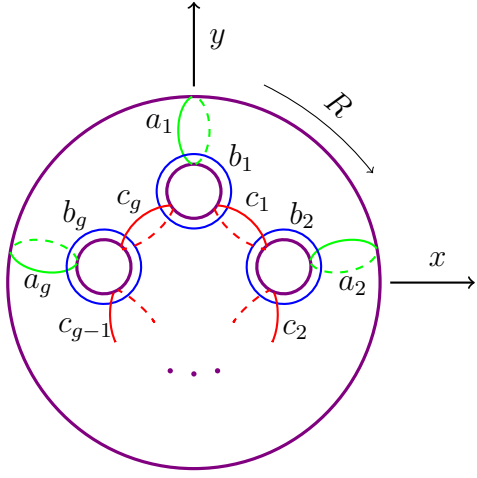
\begin{figure}
\begin{tikzpicture}[scale=0.45]
\begin{scope} [xshift=7cm]
 \draw[very thick, violet] (0,0) circle [radius=5.5cm];
 \draw[very thick, violet] (0,2.7) circle [radius=0.8cm]; 
 \draw[very thick, violet, rotate=80] (0,2.7) circle [radius=0.8cm]; 
 \draw[very thick, violet, rotate=-80] (0,2.7) circle [radius=0.8cm]; 
 \draw[very thick, violet, fill ] (0,-2.7) circle [radius=0.03cm]; 
 \draw[very thick, violet, fill, rotate=15] (0,-2.7) circle [radius=0.03cm]; 
\draw[very thick, violet, fill, rotate=-15] (0,-2.7) circle [radius=0.03cm]; 
\draw[thick, green,  rounded corners=10pt] (-0.05, 3.5) ..controls (-0.6,3.8) and (-0.6,5.2).. (-0.05,5.5) ;
\draw[thick, green, dashed, rounded corners=10pt] (0.05,3.5)..controls (0.6,3.8) and (0.6,5.2).. (0.05,5.5) ;
\node at (-1,4.7) {$a_1$};
\draw[thick, green, rotate=80, rounded corners=10pt] 
	(-0.05, 3.5) ..controls (-0.6,3.8) and (-0.6,5.2).. (-0.05,5.5) ;
\draw[thick, green, dashed, rotate=80, rounded corners=10pt] 
	(0.05,3.5)..controls (0.6,3.8) and (0.6,5.2).. (0.05,5.5) ;
\node at (-4.6,-0.1) {$a_g$};
\draw[thick, green, rotate=-80, rounded corners=10pt] 
	(-0.05, 3.5) ..controls (-0.6,3.8) and (-0.6,5.2).. (-0.05,5.5) ;
\draw[thick, green, dashed, rotate=-80, rounded corners=10pt] 
	(0.05,3.5)..controls (0.6,3.8) and (0.6,5.2).. (0.05,5.5) ;
\node at (4.7,-0.1) {$a_2$};
 \draw[thick, blue] (0,2.7) circle [radius=1.1cm]; 
 \draw[thick, rotate=80, blue] (0,2.7) circle [radius=1.1cm]; 
 \draw[thick, blue, rotate=-80] (0,2.7) circle [radius=1.1cm]; 
\node at (-3.5,2) {$b_g$};
\node at (3.2,2) {$b_2$};
\node at (1.4,3.7) {$b_1$};
\draw[thick, red, rounded corners=15pt] (-2.15, 1.05)-- (-1.856,2.233) -- (-0.7,2.28) ;
\draw[thick, red, dashed, rounded corners=15pt] (-2.03, 1)-- (-0.96,1.16) -- (-0.6,2.18) ;
\node at (-1.9,2.4) {$c_g$};
\draw[thick, red, rounded corners=15pt] (2.15, 1.05)-- (1.856,2.233) -- (0.7,2.28) ;
\draw[thick, red, dashed, rounded corners=15pt] (2.03, 1)-- (0.96,1.16) -- (0.6,2.18) ;
\node at (1.9,2.4) {$c_1$};
\draw[thick, red, rotate=80, rounded corners=10pt] (-2.1, 2)-- (-1.5,2.433) -- (-0.7,2.28) ;
\draw[thick, red, dashed, rotate=80, rounded corners=8pt] (-1.3, 1)-- (-0.9,1.36) -- (-0.6,2.18) ;
\node at (-3.2,-1.4) {$c_{g-1}$};
\draw[thick, red, rotate=-80, rounded corners=10pt] (2.1, 2)-- (1.5,2.433) -- (0.7,2.28) ;
\draw[thick, red, dashed, rotate=-80, rounded corners=8pt] (1.3, 1)-- (0.9,1.36) -- (0.6,2.18) ;
\node at (3,-1.4) {$c_2$};
\draw[->, rounded corners=20pt, rotate=-19.5] (0, 6.2)..controls (1.395, 6.039) and (2.716, 5.574).. (3.906,4.817);
\node[rotate=-40] at (4.28,5.284) {$R$};
\end{scope}
\node at (0.7+7, 6.7+0.5) {$y$};
\node at (6.7+7+0.5, 0.7) {$x$};
\draw[->, thick] (0+7,5.8)--(0+7, 8.3);
\draw[->, thick] (5.8+7,0)--(8.3+7,0);

\end{tikzpicture}
       \caption{The curves $a_i,b_i,c_i$ and the rotation $R$
          on the surface $\Sigma_g$.}
  \label{fig1}
\end{figure}

\bigskip


\section{Twelve new generating sets for $\mod(\Sigma_g)$.}
In this section, we obtain twelve new generating sets to 
generate the mapping class group by two elements of small orders.
We follow the idea of Korkmaz in~\cite{Korkmaz3inv} to create our generating sets in the next corollaries to Theorem~\ref{thm:thmKorkmaz}.
Our idea is to use the rotation element $R$, a torsion element of order $g$ in the group $\mod(\Sigma_g)$, as the first element and products of Dehn twists as the second element. 

We use the first four corollaries to create generating sets of elements of order $g$.
In order to follow Theorem~\ref{thm:thmKorkmaz}, we need the rotation element $R$, which is of order $g$, in our generating set. So, we cannot decrease order $g$ by following this method.
But indeed, we can reduce the order of the second element. Applying Lemma~\ref{lem:order}, we see that most suitable candidates are the divisors of $g$. Unfortunately, it is not possible 
to find an order $2$ element, an involution, as the second element in our generating sets by following this method. Instead, we found the least divisor of $g$ greater than $2$ as the minimal order for the second element. 
We use the next six corollaries after the first four ones to create generating sets of elements of order $g$ and $g'$ where $g'$ is the least divisor of $g$ such that $g'>2$.

\begin{corollary} \label{cor:gen1} 
If $g=6$, then the mapping class group  $\mod(\Sigma_g)$ is generated by 
the two elements
$R$ and $C_1B_4A_6A_1^{-1}B_5^{-1}C_2^{-1}$.
\end{corollary}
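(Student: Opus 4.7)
The plan is to set $H = \langle R, F\rangle$ with $F = C_1B_4A_6A_1^{-1}B_5^{-1}C_2^{-1}$, and to show that $H$ contains the four generators $R, A_1B_1^{-1}, B_1C_1^{-1}, C_1B_2^{-1}$ of Corollary~\ref{cor:thmKorkmaz}; the conclusion is then immediate from that corollary.

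The starting observation is that the six curves $c_1, c_2, b_4, b_5, a_6, a_1$ appearing in $F$ are pairwise disjoint (by inspection of Figure~\ref{fig1} at $g=6$, noting that for these specific indices none of the usually non-disjoint pairs $a_i\cap b_i$, $b_i\cap c_{i-1}$, $b_i\cap c_i$ actually occur). Hence the six Dehn twists in $F$ commute pairwise and one may regroup
\[
F \;=\; (C_1C_2^{-1})\,(B_4B_5^{-1})\,(A_6A_1^{-1}),
\]
a product of three commuting pieces of exactly the shape appearing in Theorem~\ref{thm:thmKorkmaz}, but at shifted indices. Since $R \in H$, for every $k$ the conjugate $F_k := R^k F R^{-k}$ also lies in $H$ and equals the same three-piece product with all indices shifted by $k$ modulo $6$; in particular $H$ contains six such mixed products.

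The main work is then to combine these $F_k$'s together with powers of $R$ to extract each of the three targets $A_1B_1^{-1}, B_1C_1^{-1}, C_1B_2^{-1}$. Concretely, one forms carefully chosen words in $R^{\pm 1}, F_0^{\pm 1},\ldots,F_5^{\pm 1}$ in which most Dehn twist factors cancel by telescoping, using repeatedly the commutation of Dehn twists about disjoint curves, the braid relation $t_\alpha t_\beta t_\alpha = t_\beta t_\alpha t_\beta$ for curves with $|\alpha \cap \beta| = 1$, and the conjugation identity $R t_\gamma R^{-1} = t_{R(\gamma)}$.

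The principal obstacle is precisely this extraction. Each $F_k$ involves six Dehn twists drawn from three different families $(C,B,A)$ and spread across distinct indices (offsets $1, 4, 0$ from $k$ respectively), whereas each target involves only two Dehn twists about \emph{intersecting} curves at essentially the same index. The cancellations therefore cannot be orchestrated purely by disjointness; they must interleave conjugation by $R$ with multiplication of $F_k^{\pm 1}$ and systematic use of the braid relation to convert commuting-type elements into mixed-type ones. Finding the exact sequence that realises $A_1B_1^{-1}, B_1C_1^{-1}, C_1B_2^{-1}$ is the heart of the argument and is what dictates the rather elaborate choice of $F$ at $g = 6$.
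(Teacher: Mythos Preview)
Your setup is correct and your disjointness observation is valid: the six curves in $F$ are indeed pairwise disjoint, so the factors commute and $F = (C_1C_2^{-1})(B_4B_5^{-1})(A_6A_1^{-1})$. But from that point on you have not actually proved anything. You write that ``the main work is then to combine these $F_k$'s \ldots\ to extract each of the three targets'' and that ``finding the exact sequence \ldots\ is the heart of the argument'', and then you stop. A proof proposal that identifies the heart of the argument and declines to perform it is not a proof.

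More importantly, the mechanism you sketch---telescoping cancellations of Dehn twists via commutation and the braid relation applied to the \emph{symbols}---is not the technique that makes this work, and I do not see how it would succeed on its own. The paper's method is different in a crucial way: one uses the general conjugation rule $\phi\, t_\gamma\, \phi^{-1} = t_{\phi(\gamma)}$ for \emph{arbitrary} $\phi \in H$, not just $\phi = R$. Concretely, with $F_2 = RF_1R^{-1}$ and $F_3 = F_2^{-1} = C_3B_6A_2A_1^{-1}B_5^{-1}C_2^{-1}$, one checks geometrically that the element $F_3F_1 \in H$ sends the six curves $(c_3,b_6,a_2,a_1,b_5,c_2)$ to $(b_4,a_6,a_2,a_1,b_5,c_2)$; the only nontrivial image is $F_3F_1(c_3)=b_4$, which follows from $t_{c_3}t_{b_4}(c_3)=b_4$. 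Conjugating $F_3$ by $F_3F_1$ therefore yields $F_4 = B_4A_6A_2A_1^{-1}B_5^{-1}C_2^{-1} \in H$, and then $F_1F_4^{-1} = C_1A_2^{-1} \in H$. This ``act on the curves, then conjugate the product'' step is iterated several more times (producing elements $F_5,\ldots,F_7$) before one finally isolates $B_iA_{i+1}^{-1}$, $B_iC_i^{-1}$, $B_{i+1}B_i^{-1}$ and hence the generators of Theorem~\ref{thm:thmKorkmaz}. Your proposal neither uses nor hints at this curve-tracking technique, and without it the extraction you promise remains unsubstantiated.
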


\begin{proof}
Let $F_1=C_1B_4A_6A_1^{-1}B_5^{-1}C_2^{-1}$. Let us denote by
$H$ the subgroup of $\mod(\Sigma_6)$ generated by the set
$\{ R, F_1\}$.

If $H$ contains the elements $A_1A_2^{-1}$, $B_1B_2^{-1}$ and $C_1C_2^{-1}$ then we are done by Theorem~\ref{thm:thmKorkmaz}. 

Let
\begin{eqnarray*}
F_2
&=& RF_1R^{-1} \\
&=& R(C_1B_4A_6A_1^{-1}B_5^{-1}C_2^{-1})R^{-1}\\
&=& RC_1R^{-1}RB_4R^{-1}RA_6R^{-1}RA_1^{-1}R^{-1}RB_5^{-1}R^{-1}RC_2^{-1}R^{-1}\\
&=& Rt_{c_1}R^{-1}Rt_{b_4}R^{-1}Rt_{a_6}R^{-1}Rt_{a_1}^{-1}R^{-1}Rt_{b_5}^{-1}R^{-1}Rt_{c_2}^{-1}R^{-1}\\
&=& t_{R(c_1)}t_{R(b_4)}t_{R(a_6)}t_{R(a_1)}^{-1}t_{R(b_5)}^{-1}t_{R(c_2)}^{-1}\\
&=& t_{c_2}t_{b_5}t_{a_1}t_{a_2}^{-1}t_{b_6}^{-1}t_{c_3}^{-1}\\
&=& C_2B_5A_1A_2^{-1}B_6^{-1}C_3^{-1}
\end{eqnarray*}
and \[
F_3 = F_2^{-1} = C_3B_6A_2A_1^{-1}B_5^{-1}C_2^{-1}.
\]

\begin{figure}
\begin{tikzpicture}[scale=0.7]
\begin{scope} [xshift=0cm, yshift=0cm]
 \draw[very thick, violet] (0,0) circle [radius=2.5cm];
 \draw[very thick, violet] (0,1.6) circle [radius=0.2cm]; 
 \draw[very thick, violet, rotate=60] (0,1.6) circle [radius=0.2cm]; 
 \draw[very thick, violet, rotate=120] (0,1.6) circle [radius=0.2cm];  
 \draw[very thick, violet, rotate=180] (0,1.6) circle [radius=0.2cm];  
 \draw[very thick, violet, rotate=-120] (0,1.6) circle [radius=0.2cm]; 
 \draw[very thick, violet, rotate=-60] (0,1.6) circle [radius=0.2cm]; 
 \draw[thick, blue, rotate=-60, rounded corners=8pt] (-1.2,0.94)--(-0.8, 1.04)--(-0.18,1.5);  
 \draw[thick, blue, dashed,  rotate=-60, rounded corners=8pt] (-1.2,0.98)--(-0.92, 1.28)--(-0.2,1.54);  
 \draw[thick, red, rotate=-120, rounded corners=8pt] (-1.2,0.94)--(-0.8, 1.04)--(-0.18,1.5);  
 \draw[thick, red, dashed,  rotate=-120, rounded corners=8pt] (-1.2,0.98)--(-0.92, 1.28)--(-0.2,1.54);  
 \draw[thick, blue, rotate=180] (0,1.6) circle [radius=0.3cm]; 
\draw[thick, red, rotate=120] (0,1.6) circle [radius=0.3cm]; 
 \draw[thick, blue, rotate=60, rounded corners=8pt] (-0.02,1.8)--(-0.1, 2.15)--(-0.02,2.5);  
 \draw[thick, blue, dashed, rotate=60, rounded corners=8pt]  (0.02,1.8)--(0.1, 2.15)--(0.02,2.5); 
 \draw[thick, dashed, red, rotate=0, rounded corners=8pt] (-0.02,1.8)--(-0.1, 2.15)--(-0.02,2.5);  
 \draw[thick, red, rotate=0, rounded corners=8pt]  (0.02,1.8)--(0.1, 2.15)--(0.02,2.5); 
  \node[scale=0.6, red] at (-0.3,2.1) {$-$};
  \node[scale=0.6, blue] at (0.5,1.0) {$+$};
  \node[scale=0.6, red] at (1.1,0.0) {$-$};
  \node[scale=0.6, blue] at (-2.0,0.9) {$+$};
  \node[scale=0.6, blue] at (0.0,-1.1) {$+$};
  \node[scale=0.6, red] at (-0.9,-0.7) {$-$};
\end{scope}

\begin{scope} [xshift=5.8cm, yshift=0cm, rotate=-60]
 \draw[very thick, violet] (0,0) circle [radius=2.5cm];
 \draw[very thick, violet] (0,1.6) circle [radius=0.2cm]; 
 \draw[very thick, violet, rotate=60] (0,1.6) circle [radius=0.2cm]; 
 \draw[very thick, violet, rotate=120] (0,1.6) circle [radius=0.2cm];  
 \draw[very thick, violet, rotate=180] (0,1.6) circle [radius=0.2cm];  
 \draw[very thick, violet, rotate=-120] (0,1.6) circle [radius=0.2cm]; 
 \draw[very thick, violet, rotate=-60] (0,1.6) circle [radius=0.2cm]; 
 \draw[thick, blue, rotate=-60, rounded corners=8pt] (-1.2,0.94)--(-0.8, 1.04)--(-0.18,1.5);  
 \draw[thick, blue, dashed,  rotate=-60, rounded corners=8pt] (-1.2,0.98)--(-0.92, 1.28)--(-0.2,1.54);  
 \draw[thick, red, rotate=-120, rounded corners=8pt] (-1.2,0.94)--(-0.8, 1.04)--(-0.18,1.5);  
 \draw[thick, red, dashed,  rotate=-120, rounded corners=8pt] (-1.2,0.98)--(-0.92, 1.28)--(-0.2,1.54);  
 \draw[thick, blue, rotate=180] (0,1.6) circle [radius=0.3cm]; 
\draw[thick, red, rotate=120] (0,1.6) circle [radius=0.3cm]; 
 \draw[thick, blue, rotate=60, rounded corners=8pt] (-0.02,1.8)--(-0.1, 2.15)--(-0.02,2.5);  
 \draw[thick, blue, dashed, rotate=60, rounded corners=8pt]  (0.02,1.8)--(0.1, 2.15)--(0.02,2.5); 
 \draw[thick, dashed, red, rotate=0, rounded corners=8pt] (-0.02,1.8)--(-0.1, 2.15)--(-0.02,2.5);  
 \draw[thick, red, rotate=0, rounded corners=8pt]  (0.02,1.8)--(0.1, 2.15)--(0.02,2.5); 
  \node[scale=0.6, red] at (-0.3,2.1) {$-$};
  \node[scale=0.6, blue] at (0.5,1.0) {$+$};
  \node[scale=0.6, red] at (1.1,0.0) {$-$};
  \node[scale=0.6, blue] at (-2.0,0.9) {$+$};
  \node[scale=0.6, blue] at (0.0,-1.1) {$+$};
  \node[scale=0.6, red] at (-0.9,-0.7) {$-$};
\end{scope}

\begin{scope} [xshift=11.6cm, yshift=0cm, rotate=-60]
 \draw[very thick, violet] (0,0) circle [radius=2.5cm];
 \draw[very thick, violet] (0,1.6) circle [radius=0.2cm]; 
 \draw[very thick, violet, rotate=60] (0,1.6) circle [radius=0.2cm]; 
 \draw[very thick, violet, rotate=120] (0,1.6) circle [radius=0.2cm];  
 \draw[very thick, violet, rotate=180] (0,1.6) circle [radius=0.2cm];  
 \draw[very thick, violet, rotate=-120] (0,1.6) circle [radius=0.2cm]; 
 \draw[very thick, violet, rotate=-60] (0,1.6) circle [radius=0.2cm]; 
 \draw[thick, red, rotate=-60, rounded corners=8pt] (-1.2,0.94)--(-0.8, 1.04)--(-0.18,1.5);  
 \draw[thick, red, dashed,  rotate=-60, rounded corners=8pt] (-1.2,0.98)--(-0.92, 1.28)--(-0.2,1.54);  
 \draw[thick, blue, rotate=-120, rounded corners=8pt] (-1.2,0.94)--(-0.8, 1.04)--(-0.18,1.5);  
 \draw[thick, blue, dashed,  rotate=-120, rounded corners=8pt] (-1.2,0.98)--(-0.92, 1.28)--(-0.2,1.54);  
 \draw[thick, red, rotate=180] (0,1.6) circle [radius=0.3cm]; 
\draw[thick, blue, rotate=120] (0,1.6) circle [radius=0.3cm]; 
 \draw[thick, red, rotate=60, rounded corners=8pt] (-0.02,1.8)--(-0.1, 2.15)--(-0.02,2.5);  
 \draw[thick, red, dashed, rotate=60, rounded corners=8pt]  (0.02,1.8)--(0.1, 2.15)--(0.02,2.5); 
 \draw[thick, dashed, blue, rotate=0, rounded corners=8pt] (-0.02,1.8)--(-0.1, 2.15)--(-0.02,2.5);  
 \draw[thick, blue, rotate=0, rounded corners=8pt]  (0.02,1.8)--(0.1, 2.15)--(0.02,2.5); 
  \node[scale=0.6, blue] at (-0.3,2.1) {$+$};
  \node[scale=0.6, red] at (0.5,1.0) {$-$};
  \node[scale=0.6, blue] at (1.1,0.0) {$+$};
  \node[scale=0.6, red] at (-2.0,0.9) {$-$};
  \node[scale=0.6, red] at (0.0,-1.1) {$-$};
  \node[scale=0.6, blue] at (-0.9,-0.7) {$+$};
  \end{scope}

\begin{scope} [xshift=8.7cm, yshift=-5.6cm]
 \draw[very thick, violet] (0,0) circle [radius=2.5cm];
 \draw[very thick, violet] (0,1.6) circle [radius=0.2cm]; 
 \draw[very thick, violet, rotate=60] (0,1.6) circle [radius=0.2cm]; 
 \draw[very thick, violet, rotate=120] (0,1.6) circle [radius=0.2cm];  
 \draw[very thick, violet, rotate=180] (0,1.6) circle [radius=0.2cm];  
 \draw[very thick, violet, rotate=-120] (0,1.6) circle [radius=0.2cm]; 
 \draw[very thick, violet, rotate=-60] (0,1.6) circle [radius=0.2cm]; 
 \draw[thick, red, rotate=-120, rounded corners=8pt] (-1.2,0.94)--(-0.8, 1.04)--(-0.18,1.5);  
 \draw[thick, red, dashed,  rotate=-120, rounded corners=8pt] (-1.2,0.98)--(-0.92, 1.28)--(-0.2,1.54);  
 \draw[thick, blue, rotate=180] (0,1.6) circle [radius=0.3cm]; 
\draw[thick, red, rotate=120] (0,1.6) circle [radius=0.3cm]; 
 \draw[thick, blue, rotate=-60, rounded corners=8pt] (-0.02,1.8)--(-0.1, 2.15)--(-0.02,2.5);  
 \draw[thick, blue, dashed, rotate=-60, rounded corners=8pt]  (0.02,1.8)--(0.1, 2.15)--(0.02,2.5); 
 \draw[thick, blue, rotate=60, rounded corners=8pt] (-0.02,1.8)--(-0.1, 2.15)--(-0.02,2.5);  
 \draw[thick, blue, dashed, rotate=60, rounded corners=8pt]  (0.02,1.8)--(0.1, 2.15)--(0.02,2.5); 
 \draw[thick, dashed, red, rotate=0, rounded corners=8pt] (-0.02,1.8)--(-0.1, 2.15)--(-0.02,2.5);  
 \draw[thick, red, rotate=0, rounded corners=8pt]  (0.02,1.8)--(0.1, 2.15)--(0.02,2.5); 
  \node[scale=0.6, red] at (-0.3,2.1) {$-$};
  \node[scale=0.6, blue] at (1.6,1.3) {$+$};
  \node[scale=0.6, red] at (1.1,0.0) {$-$};
  \node[scale=0.6, blue] at (-2.0,0.9) {$+$};
  \node[scale=0.6, blue] at (0.0,-1.1) {$+$};
  \node[scale=0.6, red] at (-0.9,-0.7) {$-$};
  \end{scope}

\begin{scope} [xshift=2.9cm, yshift=-5.6cm]
 \draw[very thick, violet] (0,0) circle [radius=2.5cm];
 \draw[very thick, violet] (0,1.6) circle [radius=0.2cm]; 
 \draw[very thick, violet, rotate=60] (0,1.6) circle [radius=0.2cm]; 
 \draw[very thick, violet, rotate=120] (0,1.6) circle [radius=0.2cm];  
 \draw[very thick, violet, rotate=180] (0,1.6) circle [radius=0.2cm];  
 \draw[very thick, violet, rotate=-120] (0,1.6) circle [radius=0.2cm]; 
 \draw[very thick, violet, rotate=-60] (0,1.6) circle [radius=0.2cm]; 
 \draw[thick, blue, rotate=180] (0,1.6) circle [radius=0.3cm]; 
\draw[thick, red, rotate=120] (0,1.6) circle [radius=0.3cm]; 
 \draw[thick, blue, rotate=-60, rounded corners=8pt] (-0.02,1.8)--(-0.1, 2.15)--(-0.02,2.5);  
 \draw[thick, blue, dashed, rotate=-60, rounded corners=8pt]  (0.02,1.8)--(0.1, 2.15)--(0.02,2.5); 
 \draw[thick, blue, rotate=60, rounded corners=8pt] (-0.02,1.8)--(-0.1, 2.15)--(-0.02,2.5);  
 \draw[thick, blue, dashed, rotate=60, rounded corners=8pt]  (0.02,1.8)--(0.1, 2.15)--(0.02,2.5); 
 \draw[thick, dashed, red, rotate=0, rounded corners=8pt] (-0.02,1.8)--(-0.1, 2.15)--(-0.02,2.5);  
 \draw[thick, red, rotate=0, rounded corners=8pt]  (0.02,1.8)--(0.1, 2.15)--(0.02,2.5); 
 \draw[thick, dashed, red, rotate=-120, rounded corners=8pt] (-0.02,1.8)--(-0.1, 2.15)--(-0.02,2.5);  
 \draw[thick, red, rotate=-120, rounded corners=8pt]  (0.02,1.8)--(0.1, 2.15)--(0.02,2.5); 
  \node[scale=0.6, red] at (-0.3,2.1) {$-$};
  \node[scale=0.6, blue] at (1.6,1.3) {$+$};
  \node[scale=0.6, red] at (2.0,-0.8) {$-$};
  \node[scale=0.6, blue] at (-2.0,0.9) {$+$};
  \node[scale=0.6, blue] at (0.0,-1.1) {$+$};
  \node[scale=0.6, red] at (-0.9,-0.7) {$-$};
\end{scope}

\begin{scope} [xshift=2.9cm, yshift=-11.2cm, rotate=-60]
 \draw[very thick, violet] (0,0) circle [radius=2.5cm];
 \draw[very thick, violet] (0,1.6) circle [radius=0.2cm]; 
 \draw[very thick, violet, rotate=60] (0,1.6) circle [radius=0.2cm]; 
 \draw[very thick, violet, rotate=120] (0,1.6) circle [radius=0.2cm];  
 \draw[very thick, violet, rotate=180] (0,1.6) circle [radius=0.2cm];  
 \draw[very thick, violet, rotate=-120] (0,1.6) circle [radius=0.2cm]; 
 \draw[very thick, violet, rotate=-60] (0,1.6) circle [radius=0.2cm]; 
 \draw[thick, blue, rotate=180] (0,1.6) circle [radius=0.3cm]; 
\draw[thick, red, rotate=120] (0,1.6) circle [radius=0.3cm]; 
 \draw[thick, blue, rotate=-60, rounded corners=8pt] (-0.02,1.8)--(-0.1, 2.15)--(-0.02,2.5);  
 \draw[thick, blue, dashed, rotate=-60, rounded corners=8pt]  (0.02,1.8)--(0.1, 2.15)--(0.02,2.5); 
 \draw[thick, blue, rotate=60, rounded corners=8pt] (-0.02,1.8)--(-0.1, 2.15)--(-0.02,2.5);  
 \draw[thick, blue, dashed, rotate=60, rounded corners=8pt]  (0.02,1.8)--(0.1, 2.15)--(0.02,2.5); 
 \draw[thick, dashed, red, rotate=0, rounded corners=8pt] (-0.02,1.8)--(-0.1, 2.15)--(-0.02,2.5);  
 \draw[thick, red, rotate=0, rounded corners=8pt]  (0.02,1.8)--(0.1, 2.15)--(0.02,2.5); 
 \draw[thick, dashed, red, rotate=-120, rounded corners=8pt] (-0.02,1.8)--(-0.1, 2.15)--(-0.02,2.5);  
 \draw[thick, red, rotate=-120, rounded corners=8pt]  (0.02,1.8)--(0.1, 2.15)--(0.02,2.5); 
  \node[scale=0.6, red] at (-0.3,2.1) {$-$};
  \node[scale=0.6, blue] at (1.6,1.3) {$+$};
  \node[scale=0.6, red] at (2.0,-0.8) {$-$};
  \node[scale=0.6, blue] at (-2.0,0.9) {$+$};
  \node[scale=0.6, blue] at (0.0,-1.1) {$+$};
  \node[scale=0.6, red] at (-0.9,-0.7) {$-$};
\end{scope}

\begin{scope} [xshift=8.7cm, yshift=-11.2cm]
 \draw[very thick, violet] (0,0) circle [radius=2.5cm];
 \draw[very thick, violet] (0,1.6) circle [radius=0.2cm]; 
 \draw[very thick, violet, rotate=60] (0,1.6) circle [radius=0.2cm]; 
 \draw[very thick, violet, rotate=120] (0,1.6) circle [radius=0.2cm];  
 \draw[very thick, violet, rotate=180] (0,1.6) circle [radius=0.2cm];  
 \draw[very thick, violet, rotate=-120] (0,1.6) circle [radius=0.2cm]; 
 \draw[very thick, violet, rotate=-60] (0,1.6) circle [radius=0.2cm]; 
 \draw[thick, blue, rotate=180] (0,1.6) circle [radius=0.3cm]; 
\draw[thick, red, rotate=60] (0,1.6) circle [radius=0.3cm]; 
 \draw[thick, blue, rotate=60, rounded corners=8pt] (-0.02,1.8)--(-0.1, 2.15)--(-0.02,2.5);  
 \draw[thick, blue, dashed, rotate=60, rounded corners=8pt]  (0.02,1.8)--(0.1, 2.15)--(0.02,2.5); 
 \draw[thick, dashed, red, rotate=180, rounded corners=8pt] (-0.02,1.8)--(-0.1, 2.15)--(-0.02,2.5);  
 \draw[thick, red, rotate=180, rounded corners=8pt]  (0.02,1.8)--(0.1, 2.15)--(0.02,2.5);  
  \node[scale=0.6, red] at (-0.3,-2.1) {$-$};
  \node[scale=0.6, blue] at (-2.0,0.9) {$+$};
  \node[scale=0.6, blue] at (0.0,-1.1) {$+$};
  \node[scale=0.6, red] at (-0.9,0.7) {$-$};
\end{scope}

	 \node[scale=0.8] at (-1.5,2.5) {$F_1$};
 	\node[scale=0.8] at (4.3,2.5) {$F_2$};
 	\node[scale=0.8] at (10.1,2.5) {$F_3$};
 	\node[scale=0.8] at (7.2,-3.1) {$F_4$};
 	\node[scale=0.8] at (1.4,-3.1) {$F_5$};
 	\node[scale=0.8] at (1.4,-8.7) {$F_6$};
 	\node[scale=0.8] at (7.2,-8.7) {$F_7$};
 	\draw[ ->, rounded corners=5pt] (2.4,1.5)--(2.9,1.6)-- (3.4,1.5);
 	\node[scale=0.8] at (2.9,2) {$R$};
 	\draw[ ->, rounded corners=5pt] (11.1,-2.7)--(11, -3.3)--(10.7,-3.7);
 	\node[scale=0.8] at (11.6,-3.2) {$F_3F_1$};
 	\draw[ ->, rounded corners=5pt] (5.7+0.97,-3.92)--(5.7+0.07,-3.84)--(5.7-0.83,-3.92) ;
 	\node[scale=0.8] at (5.7,-3.53) {$ \cdot (C_2A_3^{-1})$};
\end{tikzpicture}
 	\caption{Proof of Corollary~\ref{cor:gen1}.}
\end{figure}
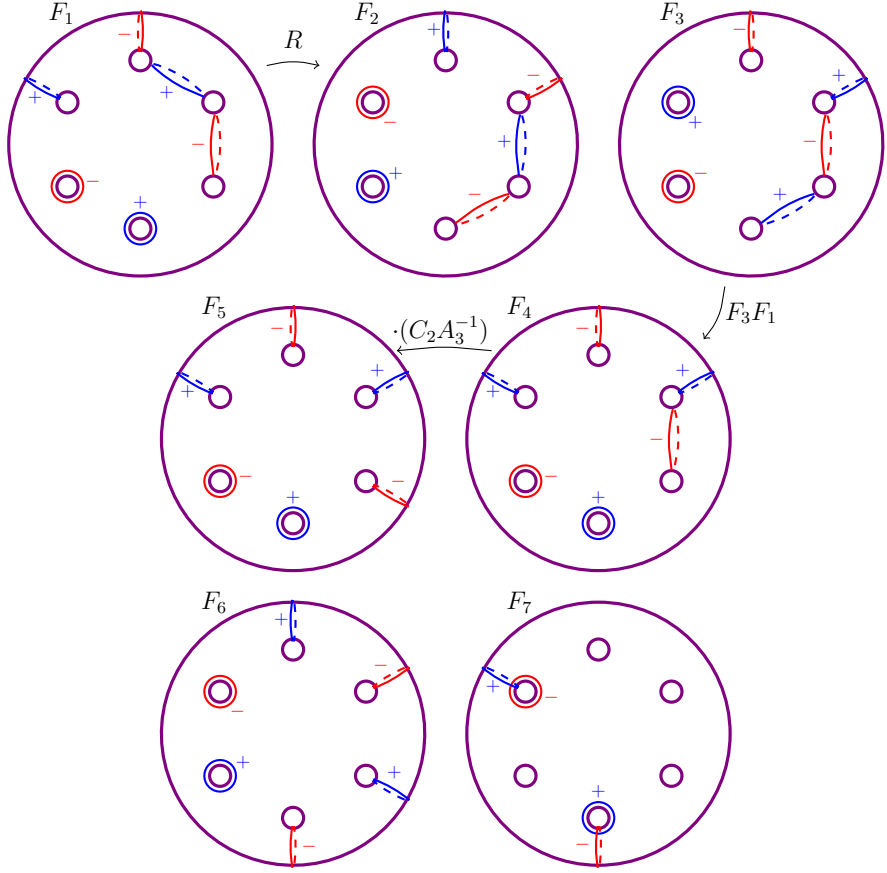

Then, $F_3F_1(c_3,b_6,a_2,a_1,b_5,c_2)=(b_4,a_6,a_2,a_1,b_5,c_2)$ so that\\
$F_4=B_4A_6A_2A_1^{-1}B_5^{-1}C_2^{-1}\in H$. Note that $F_3F_1(c_3) = b_4$ 
since \begin{eqnarray*}
t_{F_3F_1(c_3)}
&=& (F_3F_1)t_{c_3}(F_3F_1)^{-1}\\
&=& F_3F_1C_3F_1^{-1}F_3^{-1}\\
&=& C_3B_4C_3B_4^{-1}C_3^{-1}\\
&=& (t_{c_3}t_{b_4})t_{c_3}(t_{c_3}t_{b_4})^{-1}\\
&=& t_{t_{c_3}t_{b_4}(c_3)}\\
&=& t_{b_4}.
\end{eqnarray*}

$F_1F_4^{-1}=C_1A_2^{-1}$$\in$$H$ and then by conjugating $C_1A_2^{-1}$ with $R$ iteratively, we get $C_iA_{i+1}^{-1}$$\in$$H$ $\forall i$.

Let \[
F_5 = F_4(C_2A_3^{-1}) = B_4A_6A_2A_1^{-1}B_5^{-1}A_3^{-1}
\]
\[
F_6 = RF_5R^{-1} = B_5A_1A_3A_2^{-1}B_6^{-1}A_4^{-1}.
\]
%
%
%
%
and
\[ 
F_7 = F_5F_6 = B_4A_6B_6^{-1}A_4^{-1}.
\]

$(C_4A_5^{-1})F_7(c_4,a_5)=(b_4,a_5)$ so that $B_4A_5^{-1}$$\in$$H$ and then \mbox{$B_iA_{i+1}^{-1}$$\in$$H$ $\forall i$}.\\
$B_iC_i^{-1}=(B_iA_{i+1}^{-1})(A_{i+1}C_i^{-1})$$\in$$H$  $\forall i$.

$(A_4B_3^{-1})F_7(a_4,b_3)=(b_4,b_3)$ so that $B_4B_3^{-1}$$\in$$H$ and then \mbox{$B_{i+1}B_i^{-1}$$\in$$H$ $\forall i$}.
In particular, $B_1B_2^{-1}$$\in$$H$.

$C_1C_2^{-1}=(C_1B_1^{-1})(B_1B_2^{-1})(B_2C_2^{-1})$$\in$$H$.

$A_1A_2^{-1}=(A_1B_6^{-1})(B_6B_1^{-1})(B_1A_2^{-1})$$\in$$H$.

It follows from  Theorem~\ref{thm:thmKorkmaz} that
$H=\mod(\Sigma_6)$, completing the proof of the corollary.
\end{proof}

\begin{corollary} \label{cor:gen2} 
If $g=7$, then the mapping class group $\mod(\Sigma_g)$ is generated by two elements
$R$ and $C_1B_4A_6A_7^{-1}B_5^{-1}C_2^{-1}$.
\end{corollary}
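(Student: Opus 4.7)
The plan is to mirror the proof of Corollary~\ref{cor:gen1} at the structural level, adjusting the index arithmetic from modulo $6$ to modulo $7$. Let $F_1=C_1B_4A_6A_7^{-1}B_5^{-1}C_2^{-1}$ and let $H\le\mod(\Sigma_7)$ be the subgroup generated by $R$ and $F_1$; by Theorem~\ref{thm:thmKorkmaz} it suffices to produce $A_1A_2^{-1}$, $B_1B_2^{-1}$, and $C_1C_2^{-1}$ in $H$.

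First I would form $F_2=RF_1R^{-1}=C_2B_5A_7A_1^{-1}B_6^{-1}C_3^{-1}$ and consider $F_2^{-1}=C_3B_6A_1A_7^{-1}B_5^{-1}C_2^{-1}$; this plays the role that $F_3$ did in the $g=6$ case (the accidental equality $R^2F_1R^{-2}=F_2^{-1}$ used there fails when $g=7$, but one can simply substitute $F_2^{-1}$). Using that distinct Humphries curves are generically disjoint, together with the braid identities $t_{c_3}t_{b_4}(c_3)=b_4$ and $t_{b_6}t_{a_6}(b_6)=a_6$, the product $F_2^{-1}F_1$ should act by $c_3\mapsto b_4$ and $b_6\mapsto a_6$ while fixing each of $a_1,a_7,b_5,c_2$. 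Conjugation then yields $F_4=B_4A_6A_1A_7^{-1}B_5^{-1}C_2^{-1}\in H$, and the telescoping product $F_1F_4^{-1}$ collapses (by cancelling $C_2^{-1}C_2$, $B_5^{-1}B_5$, $A_7^{-1}A_7$ and commuting disjoint twists) to a two-twist element of $H$; rotating under $R$ distributes this relation across all indices.

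I would then replicate the remainder of Corollary~\ref{cor:gen1}: multiply $F_4$ by an appropriate rotate of the extracted two-twist element to kill the trailing $C_2^{-1}$, obtain $F_5$, rotate to form $F_6=RF_5R^{-1}$, and produce the four-twist word $F_7=F_5F_6\in H$. Two further braid-relation arguments applied to $F_7$ against carefully chosen test-curve pairs (one of type $(c,a)$ and one of type $(a,b)$) extract two more two-twist relations in $H$; these, combined with conjugation by $R$, supply enough short relations between adjacent pairs of Humphries curves to build $A_1A_2^{-1}$, $B_1B_2^{-1}$, and $C_1C_2^{-1}$ by elementary chain computations. Theorem~\ref{thm:thmKorkmaz} then gives $H=\mod(\Sigma_7)$.

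The main obstacle is bookkeeping rather than any new idea. Because the indices wrap modulo $7$ rather than $6$, the curves appearing in the intermediate elements $F_4,\ldots,F_7$ are shifted, and in particular the first extracted two-twist element is $C_1A_1^{-1}$ rather than the $C_1A_2^{-1}$ that arose for $g=6$. This in turn changes which short relations emerge from $F_7$ (one expects $B_{i+1}A_i^{-1}$ and $B_iB_{i+1}^{-1}$ in place of $B_iA_{i+1}^{-1}$ and $B_{i+1}B_i^{-1}$), so the final assembly of the three Humphries-type generators must be rederived with the new indices. Each disjointness check and each application of the braid relation has to be verified afresh, but the overall architecture of the proof transfers directly.
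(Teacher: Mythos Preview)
Your derivation of $F_4=B_4A_6A_1A_7^{-1}B_5^{-1}C_2^{-1}$ is correct, and the observation $F_1F_4^{-1}=C_1A_1^{-1}$ (hence $C_iA_i^{-1}\in H$ for all $i$) is valid. Your computation $F_5F_6=B_4A_6B_6^{-1}A_3^{-1}$ is also right. The gap is in the final paragraph: the braid-relation extractions that worked for $g=6$ do \emph{not} transfer. In the $g=6$ argument the four-twist word was $B_4A_6B_6^{-1}A_4^{-1}$ and the crucial input was $C_4A_5^{-1}\in H$, where the ``passenger'' curve $a_5$ is disjoint from all of $b_4,a_6,b_6,a_4,c_4$; this disjointness is exactly what lets $(C_4A_5^{-1})F_7$ send $c_4\mapsto b_4$ via the braid relation $t_{c_4}t_{b_4}(c_4)=b_4$ while fixing $a_5$. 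In your situation the only two-twist elements available are $C_iA_i^{-1}$, so the candidate is $C_4A_4^{-1}$; but $a_4$ meets $b_4$, so $(C_4A_4^{-1})F_7(c_4)=C_4A_4^{-1}B_4(c_4)\neq b_4$. No other choice of $i$ helps, since for $i\neq 4$ the curve $c_i$ itself intersects one of $b_4,b_6$ or is disjoint from the whole configuration and yields nothing new. The hoped-for relations $B_{i+1}A_i^{-1}$ and $B_iB_{i+1}^{-1}$ are therefore not obtainable by this template, and the ``elementary chain computations'' you allude to cannot be carried out.

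The paper's proof anticipates this obstruction and takes a different route after $F_4$: instead of reading off a two-twist element, it iterates the rotate--invert--conjugate step twice more (producing $F_5,\ldots,F_{10}$, six-twist words throughout) before the first two-twist relation $A_6B_6^{-1}$ emerges as $F_{10}F_8$. Only with $A_iB_i^{-1}$ in hand does it go back and manufacture $B_{i+1}C_i^{-1}$ and $C_iB_i^{-1}$ via further auxiliary elements $F_{11},\ldots,F_{14}$, and then concludes by Corollary~\ref{cor:thmKorkmaz} rather than Theorem~\ref{thm:thmKorkmaz}. So the $g=7$ argument is genuinely longer than the $g=6$ one, and your claim that ``the overall architecture of the proof transfers directly'' is where the proposal fails.
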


\begin{figure}
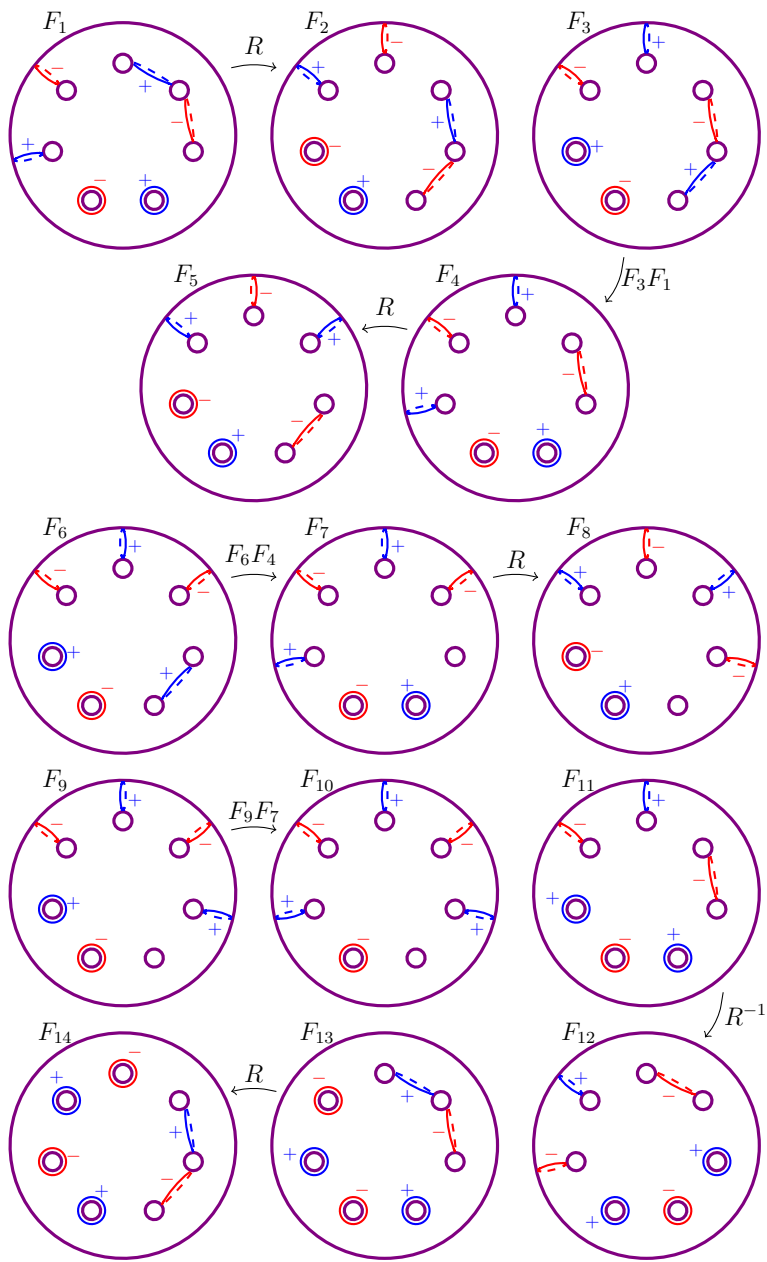


 	\caption{Proof of Corollary~\ref{cor:gen2}.}
\end{figure}

\begin{proof}
Let $F_1=C_1B_4A_6A_7^{-1}B_5^{-1}C_2^{-1}$. Let us denote by
$H$ the subgroup of $\mod(\Sigma_7)$ generated by the set
$\{ R, F_1\}$.

%
Let
\[
F_2 = RF_1R^{-1} = C_2B_5A_7A_1^{-1}B_6^{-1}C_3^{-1}
\]
and \[
F_3 = F_2^{-1} = C_3B_6A_1A_7^{-1}B_5^{-1}C_2^{-1}.
\]

Then, $F_3F_1(c_3,b_6,a_1,a_7,b_5,c_2)=(b_4,a_6,a_1,a_7,b_5,c_2)$ so that\\
$F_4=B_4A_6A_1A_7^{-1}B_5^{-1}C_2^{-1}\in H$.

Let \[
F_5 = RF_4R^{-1} = B_5A_7A_2A_1^{-1}B_6^{-1}C_3^{-1}
\]
and \[
F_6 = F_5^{-1} = C_3B_6A_1A_2^{-1}A_7^{-1}B_5^{-1}.
\]

Then, $F_6F_4(c_3,b_6,a_1,a_2,a_7,b_5)=(b_4,a_6,a_1,a_2,a_7,b_5)$ so that\\
$F_7=B_4A_6A_1A_2^{-1}A_7^{-1}B_5^{-1}$$\in$$H$.

Let \[
F_8 = RF_7R^{-1} = B_5A_7A_2A_3^{-1}A_1^{-1}B_6^{-1}
\]
and \[ 
F_9 = F_8^{-1} = B_6A_1A_3A_2^{-1}A_7^{-1}B_5^{-1}.
\]

$F_9F_7(b_6,a_1,a_3,a_2,a_7,b_5)=(a_6,a_1,a_3,a_2,a_7,b_5)$ so that\\
$F_{10}=A_6A_1A_3A_2^{-1}A_7^{-1}B_5^{-1}$$\in$$H$.

$F_{10}F_8=A_6B_6^{-1}$$\in$$H$ and then by conjugating $A_6B_6^{-1}$ with $R$ iteratively, we get $A_iB_i^{-1}$$\in$$H$ $\forall i$.

Let \[
F_{11} = (B_6A_6^{-1})F_4 = B_4B_6A_1A_7^{-1}B_5^{-1}C_2^{-1}
\]
and \[ 
F_{12} = R^{-1}F_{11}R = B_3B_5A_7A_6^{-1}B_4^{-1}C_1^{-1}.
\]
$F_{12}F_1=B_3C_2^{-1}$$\in$$H$ and then $B_{i+1}C_i^{-1}$$\in$$H$ $\forall i$.

Let \[
F_{13} = (B_6A_6^{-1})F_1(A_7B_7^{-1}) = C_1B_4B_6B_7^{-1}B_5^{-1}C_2^{-1}
\]
and \[ 
F_{14} = RF_{13}R^{-1} = C_2B_5B_7B_1^{-1}B_6^{-1}C_3^{-1}.
\]
%
%
%

Then  $F_{13}F_{14}(C_3B_4^{-1})=C_1B_1^{-1}$$\in$$H$ and then $C_iB_i^{-1}$$\in$$H$ $\forall i$.

It follows from Corollary~\ref{cor:thmKorkmaz} that
$H=\mod(\Sigma_7)$, completing the proof of the corollary.
\end{proof}

\begin{corollary} \label{cor:gen3} 
If $g=8$, then the mapping class group $\mod(\Sigma_g)$ is generated by two elements
$R$ and $B_1C_4A_7A_8^{-1}C_5^{-1}B_2^{-1}$.
\end{corollary}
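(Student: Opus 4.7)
The plan mirrors the structure of the proofs of Corollaries~\ref{cor:gen1} and~\ref{cor:gen2}. Let $F_1 = B_1C_4A_7A_8^{-1}C_5^{-1}B_2^{-1}$, and let $H$ denote the subgroup of $\mod(\Sigma_8)$ generated by $\{R, F_1\}$. By Corollary~\ref{cor:thmKorkmaz}, it suffices to show that the elements $A_1B_1^{-1}$, $B_1C_1^{-1}$, and $C_1B_2^{-1}$ all lie in $H$, since $R$ is already a generator.

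First I would generate a family of related words inside $H$ by alternately conjugating by $R$ and taking inverses. For instance,
\[
F_2 = R F_1 R^{-1} = B_2 C_5 A_8 A_1^{-1} C_6^{-1} B_3^{-1}, \qquad
F_3 = F_2^{-1} = B_3 C_6 A_1 A_8^{-1} C_5^{-1} B_2^{-1}
\]
both lie in $H$. Note that $F_3$ and $F_1$ share the tail $A_8^{-1}C_5^{-1}B_2^{-1}$, so the product $F_3 F_1$ should collapse using the braid-relation identity $t_x t_y(x) = y$ that holds whenever $x,y$ intersect once (this is exactly the device used in Corollary~\ref{cor:gen1} to deduce $t_{F_3F_1(c_3)} = t_{b_4}$). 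Here the analogous computation should turn $b_3$ into $c_4$, producing a shorter word $F_4 \in H$.

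I would then iterate the recipe: form $F_5 = R F_4 R^{-1}$ and $F_6 = F_5^{-1}$, compute $F_6 F_4$ to obtain a still shorter word $F_7$, and so on, exactly as in the proof of Corollary~\ref{cor:gen2}. Each collapse uses the fact, read off from Figure~\ref{fig1}, that the only pairs among our curves meeting in a single point are $(a_i,b_i)$, $(b_i,c_i)$, and $(c_i,a_{i+1})$, while all other pairs are disjoint. After a sequence of such reductions I expect to isolate two-Dehn-twist words of the form $X_i Y_j^{-1}$; closing up under conjugation by $R$ then produces the elements $A_iB_i^{-1}$, $B_iC_i^{-1}$, and $C_iB_{i+1}^{-1}$ needed to invoke Corollary~\ref{cor:thmKorkmaz}.

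The main obstacle I anticipate is the bookkeeping. For $g=8$ the roles of $b$- and $c$-curves in $F_1$ are swapped relative to the $g=7$ case, and the indices jump by $3$ rather than by $2$, so the precise choice of which product to form at each stage, and which single-twist element to peel off first, will differ from the $g=7$ argument. Care will also be required to verify at each step that the curves being manipulated genuinely intersect once, since otherwise the braid-type collapse $t_x t_y(x) = y$ fails and the reduction stalls. Once enough of the generators from Corollary~\ref{cor:thmKorkmaz} have been produced, the equality $H = \mod(\Sigma_8)$ follows immediately.
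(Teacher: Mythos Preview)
Your overall strategy matches the paper's: set $F_2=RF_1R^{-1}$, $F_3=F_2^{-1}$, use the braid identity $t_xt_y(x)=y$ to show that $F_3F_1$ sends the sextuple of curves underlying $F_3$ to a sextuple differing in a single entry, conjugate to obtain $F_4\in H$, and then take $F_4F_3^{-1}$ to peel off a two-twist element. Iterating and invoking Corollary~\ref{cor:thmKorkmaz} is exactly what the paper does.

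Two concrete corrections. First, your stated intersection pattern is off: $c_i$ meets $b_{i+1}$, not $a_{i+1}$. Second, and more importantly, the curve that moves under $F_3F_1$ is $a_1$, not $b_3$. The curve $b_3$ is disjoint from every curve in both $F_1$ and $F_3$ and hence is fixed; by contrast $a_1$ meets only $b_1$ among the twelve curves, so $F_3F_1(a_1)=A_1B_1(a_1)=b_1$. This gives $F_4=B_3C_6B_1A_8^{-1}C_5^{-1}B_2^{-1}$ (same length as $F_3$, not shorter) and $F_4F_3^{-1}=B_1A_1^{-1}\in H$, whence $B_iA_i^{-1}\in H$ for all $i$.

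One ingredient your outline omits and the paper uses essentially: after the first two-twist family is in hand, the next collapse does not come from a bare $R$-conjugate. The paper passes to $F_5=R^2F_1R^{-2}$, then sets $F_7=(B_2A_2^{-1})F_5^{-1}(A_1B_1^{-1})$ \emph{before} forming $F_7F_1$; only with these already-obtained two-twist factors inserted does the braid collapse go through and yield $C_iB_i^{-1}$. A similar modification is needed to extract $C_iB_{i+1}^{-1}$. So the $g=7$ recipe of plain rotate--invert--multiply will stall here; build the insertion step into your plan.
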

\begin{figure}
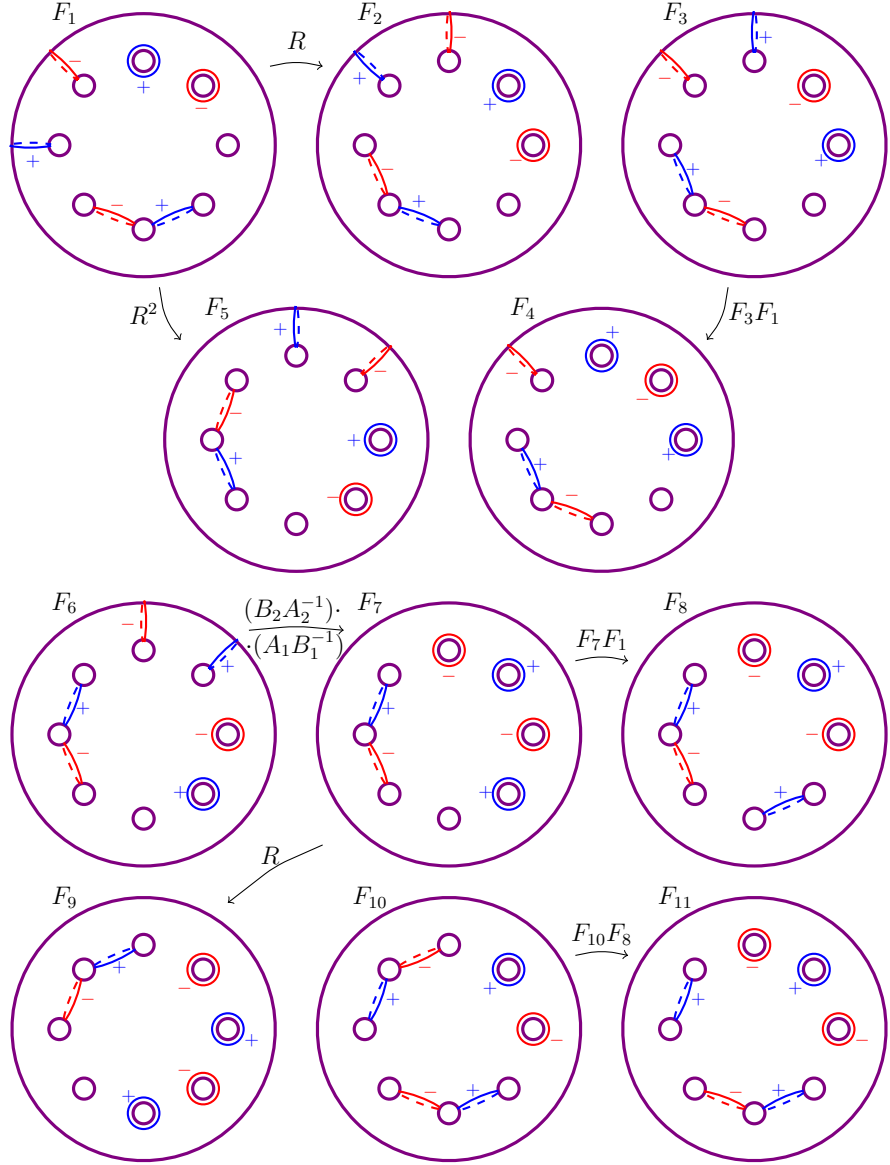


 	\caption{Proof of Corollary~\ref{cor:gen3}.}
\end{figure}

\begin{proof}

Let $F_1=B_1C_4A_7A_8^{-1}C_5^{-1}B_2^{-1}$. Let us denote by
$H$ the subgroup of $\mod(\Sigma_8)$ generated by the set
$\{ R, F_1\}$.

Let
\[ 
F_2 = RF_1R^{-1} = B_2C_5A_8A_1^{-1}C_6^{-1}B_3^{-1}
\]
and \[
F_3 = F_2^{-1} = B_3C_6A_1A_8^{-1}C_5^{-1}B_2^{-1}.\]

Then, $F_3F_1(b_3,c_6,a_1,a_8,c_5,b_2)=(b_3,c_6,b_1,a_8,c_5,b_2)$ so that\\
$F_4=B_3C_6B_1A_8^{-1}C_5^{-1}B_2^{-1}\in H$.

$F_4F_3^{-1}=B_1A_1^{-1}$$\in$$H$ and then by conjugating $B_1A_1^{-1}$ with $R$ iteratively, we get $B_iA_i^{-1}$$\in$$H$  $\forall i$.

Let \[
F_5 = R^2F_1R^{-2} = B_3C_6A_1A_2^{-1}C_7^{-1}B_4^{-1}
\]
\[
F_6 = F_5^{-1} = B_4C_7A_2A_1^{-1}C_6^{-1}B_3^{-1}\]
and \[
F_7 = (B_2A_2^{-1})F_6(A_1B_1^{-1}) = B_4C_7B_2B_1^{-1}C_6^{-1}B_3^{-1}.\]

Then, $F_7F_1(b_4,c_7,b_2,b_1,c_6,b_3)=(c_4,c_7,b_2,b_1,c_6,b_3)$ so that\\
$F_8=C_4C_7B_2B_1^{-1}C_6^{-1}B_3^{-1}$$\in$$H$.

$F_8F_7^{-1}=C_4B_4^{-1}$$\in$$H$ and then we get $C_iB_i^{-1}$$\in$$H$  $\forall i$.

Let \[
F_9 = RF_7R^{-1} = B_5C_8B_3B_2^{-1}C_7^{-1}B_4^{-1} \]
and \[
F_{10} = (C_4B_4^{-1})F_9^{-1}(B_5C_5^{-1}) = C_4C_7B_2B_3^{-1}C_8^{-1}C_5^{-1}.
\]

Then, $F_{10}F_8(c_4,c_7,b_2,b_3,c_8,c_5)=(c_4,c_7,b_2,b_3,b_1,c_5)$ so that\\
$F_{11}=C_4C_7B_2B_3^{-1}B_1^{-1}C_5^{-1}$$\in$$H$.

$F_{10}^{-1}F_{11}=C_8B_1^{-1}$$\in$$H$ and then we get $C_iB_{i+1}^{-1}$$\in$$H$  $\forall i$.

It follows from Corollary~\ref{cor:thmKorkmaz} that
$H=\mod(\Sigma_8)$, completing the proof of the corollary.
\end{proof}
 
\begin{corollary} \label{cor:gen4} 
If $g\geq 9$, then the mapping class group $\mod(\Sigma_g)$ is generated by two elements
$R$ and $C_1B_4A_7A_8^{-1}B_5^{-1}C_2^{-1}$.
\end{corollary}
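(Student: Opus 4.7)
The plan is to follow the strategy of the proof of Corollary~\ref{cor:gen2} (the $g=7$ case), with indices adapted for $g \ge 9$. Let $F_1 = C_1B_4A_7A_8^{-1}B_5^{-1}C_2^{-1}$ and $H = \langle R, F_1\rangle$; since $g\ge 9$ the six curves $c_1, b_4, a_7, a_8, b_5, c_2$ are pairwise disjoint, so the six twists in $F_1$ commute. I first form $F_2 = RF_1R^{-1}$ and $F_3 = F_2^{-1}$ and observe that the only non-disjoint pair among the curves of $F_1$ and $F_3$ is $\{c_3, b_4\}$, so by the braid identity $T_\alpha T_\beta(\alpha) = \beta$ for curves meeting once, $F_3F_1$ sends $c_3 \mapsto b_4$ and fixes the remaining curves of $F_3$. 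Conjugation gives $F_4 := (F_3F_1)F_3(F_3F_1)^{-1} = B_4B_6A_9A_8^{-1}B_5^{-1}C_2^{-1} \in H$, and since all twists in $F_4F_3^{-1}$ commute apart from the pair $\{b_4, c_3\}$, the product collapses to $F_4F_3^{-1} = B_4C_3^{-1}$; $R$-conjugation then yields $B_{i+1}C_i^{-1} \in H$ for all $i$.

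The next phase extracts $A_iB_i^{-1}$. Setting $F_5 = RF_4R^{-1}$ and $F_6 = F_5^{-1}$, the same braid swap produces $F_7 := (F_6F_4)F_6(F_6F_4)^{-1} = B_4B_6A_9A_{10}^{-1}B_7^{-1}B_5^{-1} \in H$ and the analogous collapse $F_7^{-1}F_5^{-1} = B_4^{-1}C_3 \in H$. The key new step is to consider $F_5F_1$, which now has \emph{two} simultaneous swaps: $b_7 \mapsto a_7$ (from the interaction of $a_7\in F_1$ with $b_7\in F_5$) and $c_3 \mapsto b_4$ (as before). Conjugating $F_5$ by $F_5F_1$ produces $F_8 := B_5A_7A_{10}A_9^{-1}B_6^{-1}B_4^{-1} \in H$. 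Sliding the commuting twists in $F_8F_5^{-1}$ past each other cancels the terms involving $b_5, b_6, a_9, a_{10}$, and the lone $T_{b_7}^{-1}$ slides past $a_{10}, a_9, b_6, c_3, b_4$ (all disjoint from $b_7$) to leave $F_8F_5^{-1} = A_7B_7^{-1}B_4^{-1}C_3$. Multiplying on the right by $C_3^{-1}B_4 = (B_4^{-1}C_3)^{-1} \in H$ causes $C_3C_3^{-1}$ and $B_4^{-1}B_4$ to cancel trivially, giving $A_7B_7^{-1} \in H$ and hence $A_iB_i^{-1} \in H$ for all $i$ by $R$-conjugation.

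What remains is to produce an element of the form $B_1C_1^{-1}$ needed to invoke Corollary~\ref{cor:thmKorkmaz}. A natural starting point is the identity $F_1F_4^{-1} = C_1A_7A_9^{-1}B_6^{-1} \in H$ (obtained by cancelling the four central commuting twists). Replacing the $A$-twists using $A_7B_7^{-1}, A_9B_9^{-1} \in H$ yields $C_1B_7B_6^{-1}B_9^{-1} \in H$, and I plan to combine suitable $R$-shifts of this element with the known translates $B_{i+1}C_i^{-1}$ so that, after reordering around the unique non-commuting pair $\{b_1, c_1\}$, the resulting product factors as $C_1B_1^{-1} \cdot X$ with $X \in H$. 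The main obstacle will be executing this final alignment: because $b_1$ and $c_1$ intersect, extracting $B_1C_1^{-1}$ cannot be a purely abelian cancellation, and one must choose the shifts so that every other $A$- and $B$-twist pairs up and cancels, leaving only the desired element. Once $B_1C_1^{-1} \in H$ is established, Corollary~\ref{cor:thmKorkmaz} yields $H = \mod(\Sigma_g)$.
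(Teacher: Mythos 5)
Your first stage is correct and is essentially the paper's: $F_2=RF_1R^{-1}$, $F_3=F_2^{-1}$, the swap $c_3\mapsto b_4$ under $F_3F_1$ giving $F_4=B_4B_6A_9A_8^{-1}B_5^{-1}C_2^{-1}\in H$, and the collapse $F_4F_3^{-1}=B_4C_3^{-1}$, hence $B_{i+1}C_i^{-1}\in H$. The trouble starts with your ``key new step''. You assert that $F_5F_1$ sends $c_3\mapsto b_4$ ``as before'', but in $F_5=RF_4R^{-1}=B_5B_7A_{10}A_9^{-1}B_6^{-1}C_3^{-1}$ the twist about $c_3$ occurs with exponent $-1$ (and is applied first), so $F_5F_1(c_3)=t_{c_3}^{-1}t_{b_4}(c_3)$, not $t_{c_3}t_{b_4}(c_3)=b_4$; a check on first homology gives the class $2[c_3]+[b_4]$ (up to sign), so this curve is not $b_4$. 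Consequently $(F_5F_1)F_5(F_5F_1)^{-1}$ is not $B_5A_7A_{10}A_9^{-1}B_6^{-1}B_4^{-1}$, and your $F_8$, hence $A_7B_7^{-1}\in H$, is not established. The braid trick only yields the clean swap when the twist about the curve being replaced enters the conjugating word with positive exponent, as in your correct computations of $F_4$ and $F_7$ (where $C_3$ appears positively in $F_3$, resp.\ $F_6$). The paper avoids exactly this sign obstruction: it first uses the stage-one element $C_2B_3^{-1}$ to trade $C_2^{-1}$ for $B_3^{-1}$, forming $F_5=F_4(C_2B_3^{-1})=B_4B_6A_9A_8^{-1}B_5^{-1}B_3^{-1}$, then conjugates by $R^{-2}$ and inverts so that a single swap $a_6\mapsto b_6$ occurs with the correct sign, which yields $B_6A_6^{-1}$ and hence $A_iB_i^{-1}\in H$.

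The second genuine gap is your final stage. For Corollary~\ref{cor:thmKorkmaz} you still need $B_1C_1^{-1}$ (equivalently $B_iC_i^{-1}$), and you leave its production as a ``plan'' whose ``main obstacle'' you acknowledge; the elements $R$, $A_iB_i^{-1}$, $B_{i+1}C_i^{-1}$ that you (would) have at that point are not known to generate, so this step cannot be waved through. The paper does it concretely: from $F_5=B_4B_6A_9A_8^{-1}B_5^{-1}B_3^{-1}$ it forms $F_9=F_5(A_8B_8^{-1})(B_8C_7^{-1})=B_4B_6A_9C_7^{-1}B_5^{-1}B_3^{-1}$, conjugates by $R^{-1}$, inverts, and performs one more swap ($c_6\mapsto b_6$) to extract $B_6C_6^{-1}$. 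To repair your argument, redo the second stage along these lines (arranging that every curve you intend to swap carries a positive twist in the conjugating word) and then carry out an explicit third stage of the same type rather than the sketched alignment.
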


\begin{figure}
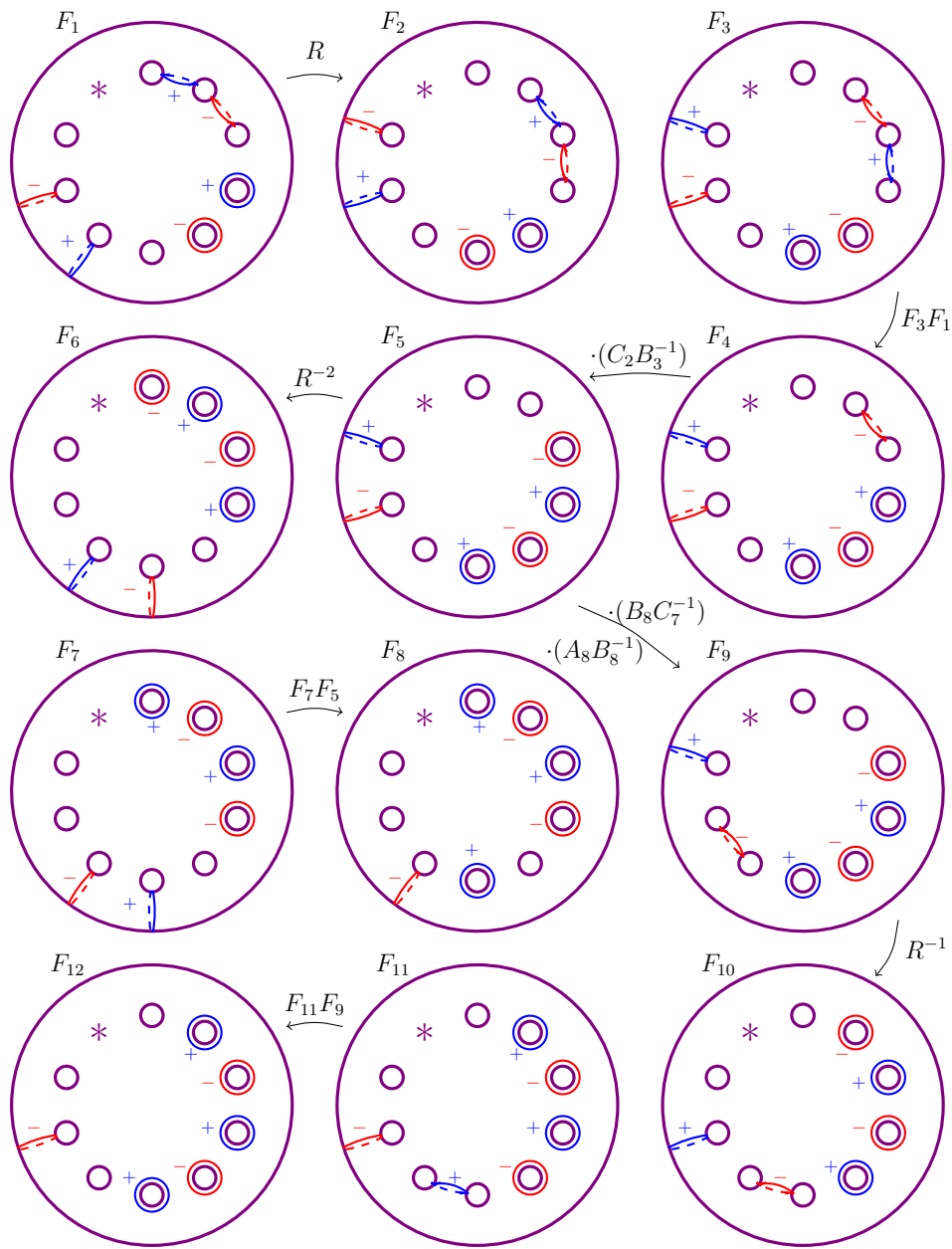


 	\caption{Proof of Corollary~\ref{cor:gen4}.}
\end{figure}

\begin{proof}

Let $F_1=C_1B_4A_7A_8^{-1}B_5^{-1}C_2^{-1}$. Let us denote by
$H$ the subgroup of $\mod(\Sigma_g)$ generated by the set
$\{ R, F_1\}$.

%
Let
\[
F_2 = RF_1R^{-1} = C_2B_5A_8A_9^{-1}B_6^{-1}C_3^{-1}
\]
and \[
F_3 = F_2^{-1} = C_3B_6A_9A_8^{-1}B_5^{-1}C_2^{-1}.
\]

Then, $F_3F_1(c_3,b_6,a_9,a_8,b_5,c_2)=(b_4,b_6,a_9,a_8,b_5,c_2)$ so that\\
$F_4=B_4B_6A_9A_8^{-1}B_5^{-1}C_2^{-1}\in H$.

$F_4F_3^{-1}=B_4C_3^{-1}$$\in$$H$ and then by conjugating $B_4C_3^{-1}$ with $R$ iteratively, we get $B_{i+1}C_i^{-1}$$\in$$H$ $\forall i$.

Let \[
F_5 = F_4(C_2B_3^{-1}) = B_4B_6A_9A_8^{-1}B_5^{-1}B_3^{-1}
\]
\[
F_6 = R^{-2}F_5R^2 = B_2B_4A_7A_6^{-1}B_3^{-1}B_1^{-1}
\]
and \[
F_7 = F_6^{-1} = B_1B_3A_6A_7^{-1}B_4^{-1}B_2^{-1}. 
\]

Then, $F_7F_5(b_1,b_3,a_6,a_7,b_4,b_2)=(b_1,b_3,b_6,a_7,b_4,b_2)$ so that\\
$F_8=B_1B_3B_6A_7^{-1}B_4^{-1}B_2^{-1}$$\in$$H$.

Then, $F_8F_7^{-1}= B_6A_6^{-1}$$\in$$H$ and then \mbox{$B_iA_i^{-1}$$\in$$H$ $\forall i$}.

Let \[
F_9 = F_5(A_8B_8^{-1})(B_8C_7^{-1}) = B_4B_6A_9C_7^{-1}B_5^{-1}B_3^{-1}
\]
\[
F_{10} = R^{-1}F_9R = B_3B_5A_8C_6^{-1}B_4^{-1}B_2^{-1}
\]
and \[
F_{11} = F_{10}^{-1} = B_2B_4C_6A_8^{-1}B_5^{-1}B_3^{-1}. 
\]

Then, $F_{11}F_9(b_2,b_4,c_6,a_8,b_5,b_3)=(b_2,b_4,b_6,a_8,b_5,b_3)$ so that\\
$F_{12}=B_2B_4B_6A_8^{-1}B_5^{-1}B_3^{-1}$$\in$$H$.

Then, $F_{12}F_{11}^{-1}= B_6C_6^{-1}$$\in$$H$ and then \mbox{$B_iC_i^{-1}$$\in$$H$ $\forall i$}.

It follows from Corollary~\ref{cor:thmKorkmaz} that
$H=\mod(\Sigma_g)$, completing the proof of the corollary.
\end{proof}

\begin{corollary} \label{cor:gen12} 
If $g=8$, then the mapping class group $\mod(\Sigma_g)$ is generated by two elements
$R$ and $B_1A_5C_5C_7^{-1}A_7^{-1}B_3^{-1}$.
\end{corollary}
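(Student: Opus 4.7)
The plan is to follow the strategy of Corollaries~\ref{cor:gen1}--\ref{cor:gen4}: starting from $F_1 = B_1A_5C_5C_7^{-1}A_7^{-1}B_3^{-1}$, I will produce progressively simpler elements of $H := \langle R, F_1 \rangle$ until the three elements $A_1B_1^{-1}$, $B_1C_1^{-1}$, $C_1B_2^{-1}$ of Corollary~\ref{cor:thmKorkmaz} are exhibited in $H$; this will suffice, since $R \in H$ and $g = 8 \geq 3$.

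One first checks that the six curves $b_1, a_5, c_5, c_7, a_7, b_3$ are pairwise disjoint on $\Sigma_8$, so the six Dehn twists appearing in $F_1$ commute. Conjugating by $R^2$ gives
\[
F_2 = R^2 F_1 R^{-2} = B_3 A_7 C_7 C_1^{-1} A_1^{-1} B_5^{-1},
\]
and the product $F_1 F_2$ telescopes through three successive adjacent cancellations ($B_3^{-1}B_3$, then $A_7^{-1}A_7$, then $C_7^{-1}C_7$), producing the shorter element
\[
F_3 := F_1 F_2 = B_1 A_5 C_5 C_1^{-1} A_1^{-1} B_5^{-1} \in H.
\]
Since the curves $\{b_1, c_1, a_1\}$ lie near handle~$1$ while $\{a_5, c_5, b_5\}$ lie near handles~$5$ and $6$, and the two groups are mutually disjoint, $F_3$ may be rewritten as the commuting product $F_3 = (B_1 C_1^{-1} A_1^{-1})(A_5 C_5 B_5^{-1})$. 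A further useful identity is $R^4 F_3 R^{-4} = F_3^{-1}$, reflecting the $\Z/2$-symmetry interchanging the two blocks.

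The heart of the argument will be to extract, from $F_3$ and the $R$-conjugates of $F_1, F_2, F_3$, the three simple two-twist elements $A_1 B_1^{-1}$, $B_1 C_1^{-1}$, $C_1 B_2^{-1}$. The standard technique, used throughout Section~$3$, is to form a product of available elements so that, on a carefully chosen simple closed curve $\gamma$, all but one or two of the constituent Dehn twists act trivially; reading off the image of $\gamma$ then identifies the product as a short product of Dehn twists, from which one peels off the desired difference. Iterating this procedure places the three required differences in $H$, and Corollary~\ref{cor:thmKorkmaz} then yields $H = \mod(\Sigma_8)$.

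The principal obstacle will be this last extraction step: the specific chain of conjugations and multiplications that isolates the three single-twist differences is not forced by the algebraic structure and must be found by inspection, just as in the explicit chains of auxiliary elements $F_i$ exhibited in the proofs of Corollaries~\ref{cor:gen1}--\ref{cor:gen4}.
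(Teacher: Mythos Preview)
Your preliminary computations are correct: the six curves in $F_1$ are pairwise disjoint, the $R^2$-conjugate $F_2$ is as stated, the telescoping $F_1F_2 = B_1A_5C_5C_1^{-1}A_1^{-1}B_5^{-1}$ is valid, and the symmetry $R^4F_3R^{-4}=F_3^{-1}$ holds. But after this setup you stop, and you say so explicitly: the ``heart of the argument'' and ``principal obstacle'' are left undone. That is not a proof sketch with a routine verification remaining; it is the entire content of the corollary. Your $F_3$ is a product of three twists near handle~$1$ times three twists near handle~$5$, with each triple internally braided (e.g.\ $b_1$ meets both $a_1$ and $c_1$), so no further purely algebraic telescoping will peel off a single difference like $A_1B_1^{-1}$. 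Something concrete must be done, and you have not done it.

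The paper proceeds differently and actually carries out the extraction. It conjugates by $R$ (not $R^2$), sets $F_3=(RF_1R^{-1})^{-1}$, and then uses the curve-image trick: since $F_3F_1$ sends the tuple $(b_4,a_8,c_8,c_6,a_6,b_2)$ to $(b_4,a_8,b_1,c_6,a_6,b_2)$, the conjugate $(F_3F_1)F_3(F_3F_1)^{-1}$ equals $F_4:=B_4A_8B_1C_6^{-1}A_6^{-1}B_2^{-1}$, and then $F_4F_3^{-1}=B_1C_8^{-1}\in H$. Conjugating by powers of $R$ gives all $B_{i+1}C_i^{-1}$. A second application of the same trick (using $F_5=RF_4R^{-1}$ and $F_5F_4$) yields $B_1A_1^{-1}$, hence all $B_iA_i^{-1}$. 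A third, slightly longer chain produces $C_4B_4^{-1}$, and Corollary~\ref{cor:thmKorkmaz} finishes. Each step names specific curves and checks a specific image; none of this is in your proposal.
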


\begin{proof}

Let $F_1=B_1A_5C_5C_7^{-1}A_7^{-1}B_3^{-1}$. Let us denote by
$H$ the subgroup of $\mod(\Sigma_g)$ generated by the set
$\{ R, F_1\}$.

%
Let
\[
F_2 = RF_1R^{-1} = B_2A_6C_6C_8^{-1}A_8^{-1}B_4^{-1}
\]
and \[
F_3 = F_2^{-1} = B_4A_8C_8C_6^{-1}A_6^{-1}B_2^{-1}.
\]

Then, $F_3F_1(b_4,a_8,c_8,c_6,a_6,b_2)=(b_4,a_8,b_1,c_6,a_6,b_2)$ so that\\
$F_4=B_4A_8B_1C_6^{-1}A_6^{-1}B_2^{-1}\in H$.

$F_4F_3^{-1}=B_1C_8^{-1}$$\in$$H$ and then by conjugating $B_1C_8^{-1}$ with $R$ iteratively, we get $B_{i+1}C_i^{-1}$$\in$$H$ $\forall i$.

Let \[
F_5 = RF_4R^{-1} = B_5A_1B_2C_7^{-1}A_7^{-1}B_3^{-1}. 
\]

Then, $F_5F_4(b_5,a_1,b_2,c_7,a_7,b_3)=(b_5,b_1,b_2,c_7,a_7,b_3)$ so that\\
$F_6=B_5B_1B_2C_7^{-1}A_7^{-1}B_3^{-1}$$\in$$H$.

Then, $F_6F_5^{-1}= B_1A_1^{-1}$$\in$$H$ and then \mbox{$B_iA_i^{-1}$$\in$$H$ $\forall i$}.

Let \[
F_7 = (C_4B_5^{-1})F_6(C_7B_8^{-1})(A_7B_7^{-1}) = C_4B_1B_2B_3^{-1}B_8^{-1}B_7^{-1}
\]
\[
F_8 = RF_9R^{-1} = C_5B_2B_3B_4^{-1}B_1^{-1}B_8^{-1}
\]
and \[
F_9 = F_8^{-1} = B_8B_1B_4B_3^{-1}B_2^{-1}C_5^{-1}. 
\]

Then, $F_9F_7(b_8,b_1,b_4,b_3,b_2,c_5)=(b_8,b_1,c_4,b_3,b_2,c_5)$ so that\\
$F_{10}=B_8B_1C_4B_3^{-1}B_2^{-1}C_5^{-1}$$\in$$H$.

Then, $F_{10}F_9^{-1}= C_4B_4^{-1}$$\in$$H$ and then \mbox{$C_iB_i^{-1}$$\in$$H$ $\forall i$}.

It follows from Corollary~\ref{cor:thmKorkmaz} that
$H=\mod(\Sigma_8)$, completing the proof of the corollary.
\end{proof}

\begin{corollary} \label{cor:gen13} 
If $g=9$, then the mapping class group $\mod(\Sigma_g)$ is generated by two elements
$R$ and $B_1A_3C_5C_8^{-1}A_6^{-1}B_4^{-1}$.
\end{corollary}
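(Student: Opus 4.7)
The plan is to proceed exactly as in the proofs of Corollaries~\ref{cor:gen3} and~\ref{cor:gen4}: set $F_1 = B_1A_3C_5C_8^{-1}A_6^{-1}B_4^{-1}$, let $H$ be the subgroup of $\mod(\Sigma_9)$ generated by $\{R, F_1\}$, and then produce, step by step, the three elements $A_1B_1^{-1}$, $B_1C_1^{-1}$, $C_1B_2^{-1}$ inside $H$ so that Corollary~\ref{cor:thmKorkmaz} completes the argument.

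First I would record the conjugate $F_2 = RF_1R^{-1} = B_2A_4C_6C_9^{-1}A_7^{-1}B_5^{-1}$ and its inverse $F_3 = F_2^{-1} = B_5A_7C_9C_6^{-1}A_4^{-1}B_2^{-1}$. Both have the same shape as $F_1$, with indices shifted by one, because conjugation by $R$ simply relabels curves via $Rt_c R^{-1} = t_{R(c)}$. The next step is to form a product such as $F_3F_1$ (or $F_1F_2$, $F_2F_1$, etc.) and look for a place where the curve of some Dehn twist in the middle is disjoint from all of the twists that separate it from its inverse elsewhere in the word, so that those two factors may be commuted together and cancelled. The resulting word lives in $H$ but contains one fewer pair of twists. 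Because the local intersection pattern of the $a_i,b_i,c_i$ is $R$-equivariant, which commutations are available depends only on the differences of indices modulo $9$, so the search for the right product is entirely combinatorial.

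Iterating this reduction, I expect to produce after a handful of steps a two-letter element of the form $X_iY_j^{-1}$ with $X,Y\in\{A,B,C\}$. Conjugating such an element by powers of $R$ immediately gives the full $R$-orbit $X_{i+k}Y_{j+k}^{-1}$ for every $k$, i.e.\ an index-uniform family. Stringing together a few of these $R$-invariant two-letter relations via the identity $X_iZ_k^{-1} = (X_iY_j^{-1})(Y_jZ_k^{-1})$ should then put each of $A_1B_1^{-1}$, $B_1C_1^{-1}$, $C_1B_2^{-1}$ inside $H$.

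The main obstacle is the bookkeeping: at each stage one must choose exactly the right conjugate, inverse, or product so that the middle curves of the resulting word really are disjoint from their neighbours, giving an honest cancellation rather than a longer word. The arrangement of indices $\{1,3,5\}$ and $\{8,6,4\}$ in $F_1$, together with the palindromic letter pattern $B\,A\,C\,C^{-1}A^{-1}B^{-1}$, is tailored to mirror the $g=8$ construction of Corollary~\ref{cor:gen3}, and I expect a sequence of reductions of essentially the same length and flavour to carry through for $\Sigma_9$. Beyond this, the only inputs required are the standard identity $ft_c f^{-1} = t_{f(c)}$, the disjointness data read off Figure~\ref{fig1}, and Corollary~\ref{cor:thmKorkmaz}.
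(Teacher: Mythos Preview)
Your outline is the right one and matches the paper's approach: start from $F_1$, conjugate by $R$, and manufacture enough two--letter elements $X_iY_j^{-1}$ to feed into Corollary~\ref{cor:thmKorkmaz}. But as written this is a plan, not a proof: you have not carried out a single reduction beyond recording $F_2$ and $F_3$, and the entire content of the corollary is precisely that the specific element $B_1A_3C_5C_8^{-1}A_6^{-1}B_4^{-1}$ survives this procedure. Saying ``I expect a sequence of reductions of essentially the same length and flavour to carry through'' is not enough; for other superficially similar six--twist words the procedure stalls.

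Two more specific points. First, your description of the mechanism is slightly off. The key move is not to simplify the \emph{word} $F_3F_1$ by commuting and cancelling twists; rather, one conjugates an element $G\in H$ (here $G=F_3$) by another element $\psi\in H$ (here $\psi=F_3F_1$) and uses $\psi\, t_c\, \psi^{-1}=t_{\psi(c)}$ curve by curve. The payoff is a new six--twist element $F_4\in H$ whose twist curves are $\psi$ applied to those of $G$; one then takes a ratio like $F_4F_3^{-1}$ or $F_4G^{-1}$ to isolate a two--letter element. Second, the $g=9$ case does \emph{not} mirror Corollary~\ref{cor:gen3} as closely as you suggest. In the paper's proof the first conjugation $F_3F_1$ moves three curves at once, $(b_5,c_9,a_4)\mapsto(c_5,b_1,b_4)$, rather than one; the two--letter elements are then extracted in the order $C_iB_i^{-1}$, $A_iB_i^{-1}$, $B_{i+1}C_i^{-1}$, with the last of these obtained from the somewhat ad hoc product $F_4(B_4A_4^{-1})F_2(B_5C_5^{-1})=B_1C_9^{-1}$. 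You need to actually trace through a sequence of this kind and check the disjointness and braid computations at each step.
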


\begin{proof}
Let $F_1=B_1A_3C_5C_8^{-1}A_6^{-1}B_4^{-1}$. Let us denote by
$H$ the subgroup of $\mod(\Sigma_9)$ generated by the set
$\{ R, F_1\}$.

%
Let
\[
F_2 = RF_1R^{-1} = B_2A_4C_6C_9^{-1}A_7^{-1}B_5^{-1}
\]
and \[
F_3 = F_2^{-1} = B_5A_7C_9C_6^{-1}A_4^{-1}B_2^{-1}.
\]

Then, $F_3F_1(b_5,a_7,c_9,c_6,a_4,b_2)=(c_5,a_7,b_1,c_6,b_4,b_2)$ so that\\
$F_4=C_5A_7B_1C_6^{-1}B_4^{-1}B_2^{-1}\in H$.

Let \[
F_5 = RF_4R^{-1} = C_6A_8B_2C_7^{-1}B_5^{-1}B_3^{-1}
\]
and \[
F_6 = F_5^{-1} = B_3B_5C_7B_2^{-1}A_8^{-1}C_6^{-1}. 
\]

Then, $F_6F_4(b_3,b_5,c_7,b_2,a_8,c_6)=(b_3,c_5,c_7,b_2,a_8,c_6)$ so that\\
$F_7=B_3C_5C_7B_2^{-1}A_8^{-1}C_6^{-1}\in H$.

$F_7F_6^{-1}=C_5B_5^{-1}$$\in$$H$ and then by conjugating $C_5B_5^{-1}$ with $R$ iteratively, we get $C_iB_i^{-1}$$\in$$H$ $\forall i$.

Let \[
F_8 = (B_7C_7^{-1})F_6(C_6B_6^{-1}) = B_3B_5B_7B_2^{-1}A_8^{-1}B_6^{-1}
\]
\[
F_9 = RF_8R^{-1} = B_4B_6B_8B_3^{-1}A_9^{-1}B_7^{-1}
\]
and \[
F_{10} = F_9^{-1} = B_7A_9B_3B_8^{-1}B_6^{-1}B_4^{-1}. 
\]

Then, $F_{10}F_8(b_7,a_9,b_3,b_8,b_6,b_4)=(b_7,a_9,b_3,a_8,b_6,b_4)$ so that\\
$F_{11}=B_7A_9B_3A_8^{-1}B_6^{-1}B_4^{-1}$$\in$$H$.

Then, $F_{11}^{-1}F_{10}= A_8B_8^{-1}$$\in$$H$ and then \mbox{$A_iB_i^{-1}$$\in$$H$ $\forall i$}.

Then, $F_4(B_4A_4^{-1})F_2(B_5C_5^{-1})= B_1C_9^{-1} $$\in$$H$ and then \mbox{$B_{i+1}C_i^{-1}$$\in$$H$ $\forall i$}.

It follows from Corollary~\ref{cor:thmKorkmaz} that
$H=\mod(\Sigma_9)$, completing the proof of the corollary.
\end{proof}

\begin{corollary} \label{cor:gen5} 
If $g=10$, then the mapping class group $\mod(\Sigma_g)$ is generated by two elements
$R$ and $A_1C_1B_3B_7^{-1}C_5^{-1}A_5^{-1}$.
\end{corollary}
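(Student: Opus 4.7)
The plan is to follow the template established in Corollaries~\ref{cor:gen1}--\ref{cor:gen13}. Set $F_1 = A_1C_1B_3B_7^{-1}C_5^{-1}A_5^{-1}$ and let $H$ denote the subgroup of $\mod(\Sigma_{10})$ generated by $R$ and $F_1$. By Corollary~\ref{cor:thmKorkmaz}, it suffices to exhibit inside $H$ the three elements $A_1B_1^{-1}$, $B_1C_1^{-1}$, and $C_1B_2^{-1}$; and since $H$ is closed under conjugation by $R$, it is enough to produce, for some one index in each case, a representative from each of these three ``displacement'' families.

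First I would form the auxiliary conjugates
$F_2 = RF_1R^{-1} = A_2C_2B_4B_8^{-1}C_6^{-1}A_6^{-1}$
and $F_3 = F_2^{-1} = A_6C_6B_8B_4^{-1}C_2^{-1}A_2^{-1}$,
and then examine products such as $F_3F_1$, $F_1F_3$, and $(R^kF_1R^{-k})F_1$ for a few small $k$. Each useful product should agree with one of its factors in the support of all but one Dehn twist; when the mismatch involves a pair of curves meeting once, the curve-replacement identity $ft_cf^{-1} = t_{f(c)}$ combined with the braid consequence $t_\alpha t_\beta(\alpha) = \beta$ (used throughout the preceding corollaries) converts the product into a new, shorter element of $H$. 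Iterating this reduction — picking up new $F_k$'s as intermediaries — one expects to isolate a descending sequence of two-letter elements $X_iY_j^{-1}$ with $X,Y\in\{A,B,C\}$. Conjugation by $R$ then spreads each such element across all indices, and suitable combinations produce $A_iB_i^{-1}$, $B_iC_i^{-1}$, and $C_iB_{i+1}^{-1}$, at which point Corollary~\ref{cor:thmKorkmaz} forces $H = \mod(\Sigma_{10})$.

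Note that the six curves underlying $F_1$ are pairwise disjoint (the $a, b, c$ indices are separated enough that no two of the listed curves meet), so each $F_k$ factors as a commuting product of six Dehn twists; this eases the bookkeeping since the only non-commutativity that can arise is between $F_1$ and its $R$-conjugates, through explicit pairs of type $(c_i, b_{i+1})$ or $(c_i, b_i)$. The main obstacle is the combinatorial choice of the sequence of conjugations, inversions, and multiplications so that exactly one single-curve braid substitution is effected at each step and so that the eventual atomic elements fall into the three required families. This is a case-by-case diagram chase of the same flavor as in Corollaries~\ref{cor:gen3} and~\ref{cor:gen4}; I anticipate needing on the order of ten intermediate elements $F_k$ before the three target displacements emerge and the argument closes via Corollary~\ref{cor:thmKorkmaz}.
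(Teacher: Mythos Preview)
Your general strategy matches the paper's exactly: set $F_1=A_1C_1B_3B_7^{-1}C_5^{-1}A_5^{-1}$, conjugate by powers of $R$, combine with inverses so that a single braid substitution occurs at each step, peel off two-letter elements $X_iY_j^{-1}$, and finish via Corollary~\ref{cor:thmKorkmaz}. But what you have written is a plan, not a proof. Every sentence of substance is in the conditional or anticipatory mood (``I would form\ldots'', ``one expects to isolate\ldots'', ``I anticipate needing\ldots''). The entire content of this corollary \emph{is} the specific sequence of conjugations and products; asserting that such a sequence should exist is not the same as exhibiting it. In the paper's proof the chain runs through fifteen explicit elements $F_1,\ldots,F_{15}$, and several of the steps (for instance producing $C_iB_{i+1}^{-1}$) require non-obvious detours via auxiliary elements built by pre- and post-multiplying by already-obtained two-letter words. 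Until you actually write down the chain and verify each curve computation, you have not proved anything.

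One small correction to your heuristic: it is not true that the only intersections arising between $F_1$ and its $R$-conjugates are of type $(c_i,b_i)$ or $(c_i,b_{i+1})$. Pairs of type $(a_i,b_i)$ appear as well---indeed they must, since otherwise you could never manufacture $A_iB_i^{-1}$ from braid substitutions. In the paper's argument this first happens when an $R^4$-shifted descendant of $F_1$ is played against an earlier $F_k$, and the intersection $a_2\cap b_2$ is what produces $A_2B_2^{-1}$.
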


\begin{proof}

Let $F_1=A_1C_1B_3B_7^{-1}C_5^{-1}A_5^{-1}$. Let us denote by
$H$ the subgroup of $\mod(\Sigma_{10})$ generated by the set
$\{ R, F_1\}$.

%
Let
\[
F_2 = RF_1R^{-1} = A_2C_2B_4B_8^{-1}C_6^{-1}A_6^{-1}.
\]

Then, $F_2F_1(a_2,c_2,b_4,b_8,c_6,a_6)=(a_2,b_3,b_4,b_8,b_7,a_6)$ so that\\
$F_3=A_2B_3B_4B_8^{-1}B_7^{-1}A_6^{-1}\in H$.

Let \[
F_4 = R^4F_3R^{-4} = A_6B_7B_8B_2^{-1}B_1^{-1}A_{10}^{-1}
\]
and \[
F_5 = F_4^{-1} = A_{10}B_1B_2B_8^{-1}B_7^{-1}A_6^{-1}.
\]

Then, $F_5F_3(a_{10},b_1,b_2,b_8,b_7,a_6)=(a_{10},b_1,a_2,b_8,b_7,a_6)$ so that\\
$F_6=A_{10}B_1A_2B_8^{-1}B_7^{-1}A_6^{-1}$$\in$$H$.

$F_6F_5^{-1}=A_2B_2^{-1}$$\in$$H$ and then by conjugating $A_2B_2^{-1}$ with $R$ iteratively, we get $A_iB_i^{-1}$$\in$$H$ $\forall i$.

Let \[
F_7 = (B_2A_2^{-1})(A_3B_3^{-1})F_3(B_7A_7^{-1})(A_6B_6^{-1}) = B_2A_3B_4B_8^{-1}A_7^{-1}B_6^{-1}
\]
\[
F_8 = RF_2F_3^{-1}R^{-1}F_7 = B_2A_3C_3C_7^{-1}A_7^{-1}B_6^{-1}
\]
\[
F_9 = F_8^{-1} = B_6A_7C_7C_3^{-1}A_3^{-1}B_2^{-1}
\]
and \[
F_{10} = R^4F_9R^{-4} = B_{10}A_1C_1C_7^{-1}A_7^{-1}B_6^{-1}.
\]

Then, $F_{10}F_8(b_{10},a_1,c_1,c_7,a_7,b_6)=(b_{10},a_1,b_2,c_7,a_7,b_6)$ so that\\
$F_{11}=B_{10}A_1B_2C_7^{-1}A_7^{-1}B_6^{-1}$$\in$$H$.

Then, $F_{11}F_{10}^{-1}= B_2C_1^{-1}$$\in$$H$ and then \mbox{$B_{i+1}C_i^{-1}$$\in$$H$ $\forall i$}.

Let \[
F_{12} =(B_2A_2^{-1})F_3(A_6B_6^{-1})(B_6C_5^{-1})(B_7A_7^{-1})(B_8A_8^{-1})= B_2B_3B_4A_8^{-1}A_7^{-1}C_5^{-1}
\]
\[
F_{13} = F_{12}^{-1} = C_5A_7A_8B_4^{-1}B_3^{-1}B_2^{-1}
\]
and \[
F_{14} = RF_{13}R^{-1} = C_6A_8A_9B_5^{-1}B_4^{-1}B_3^{-1}. 
\]

Then, $F_{14}F_{12}(c_6,a_8,a_9,b_5,b_4,b_3)=(c_6,a_8,a_9,c_5,b_4,b_3)$ so that\\
$F_{15}=C_6A_8A_9C_5^{-1}B_4^{-1}B_3^{-1}$$\in$$H$.

Then, $F_{15}^{-1}F_{14}= C_5B_5^{-1}$$\in$$H$ and then \mbox{$C_iB_i^{-1}$$\in$$H$ $\forall i$}.

It follows from Corollary~\ref{cor:thmKorkmaz} that
$H=\mod(\Sigma_{10})$, completing the proof of the corollary.
\end{proof}

\begin{corollary} \label{cor:gen6} 
If $g \geq 13$, then the mapping class group $\mod(\Sigma_g)$ is generated by two elements
$R$ and $A_1B_4C_8C_{10}^{-1}B_6^{-1}A_3^{-1}$.
\end{corollary}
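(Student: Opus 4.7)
Let $H = \langle R, F_1 \rangle$ where $F_1 = A_1B_4C_8C_{10}^{-1}B_6^{-1}A_3^{-1}$. For $g \geq 13$ the six curves $a_1, b_4, c_8, c_{10}, b_6, a_3$ are pairwise disjoint: the cyclic indices $1, 3, 4, 6, 8, 10$ stay sufficiently separated modulo $g$ so that no $a_i$--$b_j$ pair (requiring $i=j$), no $b_i$--$c_j$ pair (requiring $i \in \{j, j+1\}$), and no same-type pair coincides. Consequently all six Dehn twists in $F_1$ commute. By Corollary~\ref{cor:thmKorkmaz} it suffices to show $A_1B_1^{-1}, B_1C_1^{-1}, C_1B_2^{-1} \in H$, and by $R$-equivariance this reduces to producing one element of each of the three types $A_iB_i^{-1}$, $B_iC_i^{-1}$, $C_iB_{i+1}^{-1}$ in $H$.

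The plan mirrors the proof of Corollary~\ref{cor:gen5}. Put $F_2 = RF_1R^{-1} = A_2B_5C_9C_{11}^{-1}B_7^{-1}A_4^{-1}$; a case check shows the only intersection between the twelve curves of $F_1 \cup F_2$ is the single pair $(b_4, a_4)$. Using the braid identity $(t_{a_4}t_{b_4})(a_4)=b_4$, and its corollary $t_{b_4}^{-1}(a_4) = t_{a_4}(b_4)$, the element $F_2F_1^{-1} \in H$ fixes the five curves $a_2, b_5, c_9, c_{11}, b_7$ of $F_2$ and sends $a_4$ to $b_4$. Conjugating $F_2$ by $F_2F_1^{-1}$ therefore produces $F_3 \in H$ that agrees with $F_2$ except the last factor $A_4^{-1}$ is replaced by $B_4^{-1}$. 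Since every other factor of $F_2$ commutes with both $A_4$ and $B_4$, the product $F_3^{-1}F_2$ telescopes to $B_4A_4^{-1}$, so $A_4B_4^{-1} \in H$, and hence $A_iB_i^{-1} \in H$ for every $i$ after conjugation by powers of $R$.

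Next put $F_4 = R^2F_1R^{-2} = A_3B_6C_{10}C_{12}^{-1}B_8^{-1}A_5^{-1}$. Here $F_1$ and $F_4$ share the three curves $a_3, b_6, c_{10}$, and the only remaining intersection between their curve sets is the single pair $(c_8, b_8)$. Consequently $F_1F_4$ telescopes after three pairs of cancellations to $G = A_1B_4C_8C_{12}^{-1}B_8^{-1}A_5^{-1} \in H$, a six-factor expression in which only $(C_8, B_8^{-1})$ fails to commute. Treating $G$ as a new base element and invoking the already available quotients $A_iB_i^{-1} \in H$ to perform the swap-surgeries illustrated by $F_7$ and $F_{12}$ in the proof of Corollary~\ref{cor:gen5} (multiplying on the left/right by appropriate $A_iB_i^{-1}$ to replace $A$-factors by $B$-factors within $G$), then taking further conjugates $R^kGR^{-k}$, inversions, and computing actions on the relevant curve tuples, one extracts a clean quotient $C_iB_{i+1}^{-1} \in H$. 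A parallel argument with the conjugate $R^6F_1R^{-6}$, whose only intersection with $F_1$ is the pair $(c_{10}, b_{10})$ when $g$ is large enough to avoid wraparound collisions, produces $B_iC_i^{-1} \in H$; boundary values of $g$ close to $13$ are handled by the same device with different shift exponents that avoid accidental coincidences.

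With $R, A_1B_1^{-1}, B_1C_1^{-1}, C_1B_2^{-1}$ all in $H$, Corollary~\ref{cor:thmKorkmaz} immediately gives $H = \mod(\Sigma_g)$. The main obstacle is the combinatorial choreography of the third paragraph: one must sequence the conjugates, inversions, and $A_iB_i^{-1}$-surgeries so that, after commutativity of disjoint Dehn twists cancels every unwanted factor, exactly one $b$--$c$ transposition survives and can be read off via the braid relation as the desired quotient. The hypothesis $g \geq 13$ is precisely what guarantees that the indices $\{1, 3, 4, 6, 8, 10\}$ of $F_1$, together with their shifts by $R^k$ for the small values of $k$ needed, remain distinct modulo $g$ and do not create parasitic extra intersections that would spoil the telescoping.
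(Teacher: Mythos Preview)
Your first step — extracting $B_4A_4^{-1}$ by conjugating $F_2$ with $F_2F_1^{-1}$ — is correct and is essentially the paper's opening move (the paper uses $F_2^{-1}F_1$ rather than $F_2F_1^{-1}$, but the mechanism and the outcome $B_iA_i^{-1}\in H$ are identical).

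The gap is in the remaining two steps. You explicitly identify the ``combinatorial choreography'' needed to isolate a $C_iB_i^{-1}$ and a $C_iB_{i+1}^{-1}$ as the main obstacle, and then you do not carry it out: phrases like ``one extracts a clean quotient'' and ``boundary values of $g$ close to $13$ are handled by the same device with different shift exponents'' are promises, not arguments. In particular, your detour through $G=F_1(R^2F_1R^{-2})=A_1B_4C_8C_{12}^{-1}B_8^{-1}A_5^{-1}$ introduces a non-commuting pair $(C_8,B_8^{-1})$ inside the word, which makes the subsequent ``telescoping'' you allude to non-trivial and not obviously realizable with the moves you list. Your $R^6$-shift claim is also false as stated at $g=13$: there $c_{16}=c_3$ meets $b_4$, so $(c_{10},b_{10})$ is \emph{not} the only intersection between the two curve sets.

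For comparison, the paper avoids your $G$-detour entirely and stays with six mutually disjoint curves throughout. Setting $F_6=(R^2F_1R^{-2})^{-1}=A_5B_8C_{12}C_{10}^{-1}B_6^{-1}A_3^{-1}$, the map $F_6F_1$ sends $b_8\mapsto c_8$ and fixes the other five curves of $F_6$; conjugating $F_6$ yields $F_7=A_5C_8C_{12}C_{10}^{-1}B_6^{-1}A_3^{-1}$ and hence $F_7F_6^{-1}=C_8B_8^{-1}\in H$. For the last quotient the paper first performs the only ``surgery'' needed, $F_8=(A_4B_4^{-1})F_1(A_3B_3^{-1})=A_1A_4C_8C_{10}^{-1}B_6^{-1}B_3^{-1}$, shifts by $R^3$, inverts, and applies the same one-intersection trick to obtain $C_8B_9^{-1}\in H$; this works uniformly for all $g\ge 13$ with no separate boundary analysis. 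Filling in your third paragraph requires producing exactly this kind of explicit sequence, and that is what is missing.
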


\begin{proof}
Let $F_1=A_1B_4C_8C_{10}^{-1}B_6^{-1}A_3^{-1}$. Let us denote by
$H$ the subgroup of $\mod(\Sigma_g)$ generated by the set
$\{ R, F_1\}$.

%
Let
\[
F_2 = RF_1R^{-1} = A_2B_5C_9C_{11}^{-1}B_7^{-1}A_4^{-1}
\]
and \[
F_3 = F_2^{-1} = A_4B_7C_{11}C_9^{-1}B_5^{-1}A_2^{-1}.
\]

Then, $F_3F_1(a_4,b_7,c_{11},c_9,b_5,a_2)=(b_4,b_7,c_{11},c_9,b_5,a_2)$ so that\\
$F_4=B_4B_7C_{11}C_9^{-1}B_5^{-1}A_2^{-1}\in H$.

$F_4F_3^{-1}=B_4A_4^{-1}$$\in$$H$ and then by conjugating $B_4A_4^{-1}$ with $R$
iteratively, we get $B_iA_i^{-1}$$\in$$H$ $\forall i$.

Let \[
F_5 = R^2F_1R^{-2} = A_3B_6C_{10}C_{12}^{-1}B_8^{-1}A_5^{-1}
\]
and \[
F_6 = F_5^{-1} = A_5B_8C_{12}C_{10}^{-1}B_6^{-1}A_3^{-1}.
\]

Then, $F_6F_1(a_5,b_8,c_{12},c_{10},b_6,a_3)=(a_5,c_8,c_{12},c_{10},b_6,a_3)$ so that\\
$F_7=A_5C_8C_{12}C_{10}^{-1}B_6^{-1}A_3^{-1}$$\in$$H$.

Then, $F_7F_6^{-1}= C_8B_8^{-1}$$\in$$H$ and then
\mbox{$C_iB_i^{-1}$$\in$$H$ $\forall i$}.

Let \[
F_8 = (A_4B_4^{-1})F_1(A_3B_3^{-1}) = A_1A_4C_8C_{10}^{-1}B_6^{-1}B_3^{-1}
\]
\[
F_9 = R^3F_8R^{-3} = A_4A_7C_{11}C_{13}^{-1}B_9^{-1}B_6^{-1}
\]
and \[
F_{10} = F_9^{-1} = B_6B_9C_{13}C_{11}^{-1}A_7^{-1}A_4^{-1}.
\]

Then, $F_{10}F_8(b_6,b_9,c_{13},c_{11},a_7,a_4)=(b_6,c_8,c_{13},c_{11},a_7,a_4)$ so
that\\
$F_{11}=B_6C_8C_{13}C_{11}^{-1}A_7^{-1}A_4^{-1}$$\in$$H$.

Then, $F_{11}F_{10}^{-1}= C_8B_9^{-1}$$\in$$H$ and then
\mbox{$C_iB_{i+1}^{-1}$$\in$$H$ $\forall i$}.

It follows from Corollary~\ref{cor:thmKorkmaz} that
$H=\mod(\Sigma_g)$, completing the proof of the corollary.
\end{proof}

\begin{corollary} \label{cor:gen7} 
If $g \geq 12$, then the mapping class group $\mod(\Sigma_g)$ is generated by two elements
$R$ and $B_1A_3C_6C_{10}^{-1}A_7^{-1}B_5^{-1}$.
\end{corollary}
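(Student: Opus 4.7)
Let $F_1 = B_1A_3C_6C_{10}^{-1}A_7^{-1}B_5^{-1}$ and let $H$ denote the subgroup of $\mod(\Sigma_g)$ generated by $\{R, F_1\}$. The plan is to follow the template established in Corollary~\ref{cor:gen6} and the preceding corollaries: build up a short list of auxiliary elements of $H$ by conjugating $F_1$ by powers of $R$ (which shifts all indices by the exponent) and by taking inverses and products, then reduce these down to the three two-twist elements $A_1B_1^{-1}$, $B_1C_1^{-1}$, and $C_1B_2^{-1}$, after which Corollary~\ref{cor:thmKorkmaz} yields $H = \mod(\Sigma_g)$.

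Concretely, I would first set $F_2 = RF_1R^{-1}$ and $F_3 = F_2^{-1}$, both of which automatically lie in $H$. The symmetric choice of indices $1,3,6,10,7,5$ in $F_1$ (positive twists of types $B,A,C$ at $1,3,6$ and negative twists of types $C,A,B$ at $10,7,5$) is engineered so that a product like $F_3F_1$, or a minor variant of it formed with one or two additional two-twist factors, agrees with $F_1$ on five of its six curves while replacing a single curve by a neighbouring one. The operative identity is $(fg)t_c(fg)^{-1} = t_{fg(c)}$ together with the braid relation $t_xt_yt_x = t_yt_xt_y$ when $x,y$ meet in one point: once the matching Dehn twists at a boundary cancel, these together rewrite a single Dehn twist in the product as a twist about a different, nearby curve.

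Next, the quotient of such a new element by a suitable $R$-conjugate of $F_1$ extracts an element of the form $X_iY_i^{-1}$. Conjugating by powers of $R$ propagates this to a full family $\{X_jY_j^{-1}\}_{j=1}^g \subset H$. Multiplying elements of such a family against $F_1$ strips off chosen symbols and produces a shorter word on which the same reduction can be iterated, extracting in turn a second and then a third family of two-twist elements. Once $A_iB_i^{-1}$, $B_iC_i^{-1}$, and $C_iB_{i+1}^{-1}$ all lie in $H$ for every $i$, Corollary~\ref{cor:thmKorkmaz} applies and the proof concludes.

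The main obstacle is essentially bookkeeping: at each reduction step one must certify that the pair of curves being manipulated either meets in exactly one point (so that the braid relation and the substitution $t_xt_y(x) = y$ apply as intended) or is disjoint (so the corresponding Dehn twists commute past each other), and that every other curve in the word is disjoint from that pair so it can be slid out of the way. The hypothesis $g \geq 12$ is precisely what guarantees that every index produced during the reductions, obtained by small $R$-shifts of $1,3,5,6,7,10$, remains distinct modulo $g$, so that no unintended coincidence of curves disrupts a planned cancellation.
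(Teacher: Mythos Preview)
Your outline correctly identifies the paper's template and matches its strategy exactly: conjugate $F_1$ by powers of $R$, take inverses and products so that five of six Dehn twists cancel and one is replaced via a braid move, extract a two-twist element $X_iY_j^{-1}$, propagate with $R$, and iterate until $A_iB_i^{-1}$, $C_iB_i^{-1}$, $C_iB_{i+1}^{-1}$ all lie in $H$. That is precisely what the paper does.

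The problem is that you have written a description of the method rather than a proof. The entire content of this corollary \emph{is} the bookkeeping you defer: one must name the specific conjugates, compute the specific images of curves, and verify the specific disjointness and single-intersection conditions. In the paper's argument, for instance, the first nontrivial step is checking that $F_3F_1$ sends the tuple $(b_6,a_8,c_{11},c_7,a_4,b_2)$ to $(c_6,a_8,c_{11},c_7,a_4,b_2)$, which yields $C_6B_6^{-1}\in H$; the next round requires first modifying $F_1$ to $F_1(C_{10}B_{10}^{-1})(B_5C_5^{-1})$ and then conjugating by $R^2$ rather than $R$ before the cancellation works. None of these choices is forced by the template, and a wrong choice of shift or of which two-twist factors to insert simply fails to produce the needed single-curve replacement. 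Your proposal contains no such choices and no such verifications, so as written it is not yet a proof.
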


\begin{proof}
Let $F_1=B_1A_3C_6C_{10}^{-1}A_7^{-1}B_5^{-1}$. Let us denote by
$H$ the subgroup of $\mod(\Sigma_g)$ generated by the set
$\{ R, F_1\}$.

%
Let
\[
F_2 = RF_1R^{-1} = B_2A_4C_7C_{11}^{-1}A_8^{-1}B_6^{-1}
\]
and \[
F_3 = F_2^{-1} = B_6A_8C_{11}C_7^{-1}A_4^{-1}B_2^{-1}.
\]

Then, $F_3F_1(b_6,a_8,c_{11},c_7,a_4,b_2)=(c_6,a_8,c_{11},c_7,a_4,b_2)$ so that\\
$F_4=C_6A_8C_{11}C_7^{-1}A_4^{-1}B_2^{-1}\in H$.

$F_4F_3^{-1}=C_6B_6^{-1}$$\in$$H$ and then by conjugating $C_6B_6^{-1}$ with $R$
iteratively, we get $C_iB_i^{-1}$$\in$$H$ $\forall i$.

Let \[
F_5 = F_1(C_{10}B_{10}^{-1})(B_5C_5^{-1}) = B_1A_3C_6B_{10}^{-1}A_7^{-1}C_5^{-1}
\]
and \[
F_6 = R^2F_5R^{-2} = B_3A_5C_8B_{12}^{-1}A_9^{-1}C_7^{-1}.
\]

Then, $F_6F_5(b_3,a_5,c_8,b_{12},a_9,c_7)=(a_3,a_5,c_8,b_{12},a_9,c_7)$ so that\\
$F_7=A_3A_5C_8B_{12}^{-1}A_9^{-1}C_7^{-1}$$\in$$H$.

Then, $F_7F_6^{-1}= A_3B_3^{-1}$$\in$$H$ and then
\mbox{$A_iB_i^{-1}$$\in$$H$ $\forall i$}.

Let \[
F_8 = (C_1B_1^{-1})(B_3A_3^{-1})F_1(B_5C_5^{-1}) = C_1B_3C_6C_{10}^{-1}A_7^{-1}C_5^{-1}
\]
and \[
F_9 = RF_8R^{-1} = C_2B_4C_7C_{11}^{-1}A_8^{-1}C_6^{-1}.
\]

Then, $F_9F_8(c_2,b_4,c_7,c_{11},a_8,c_6)=(b_3,b_4,c_7,c_{11},a_8,c_6)$ so that\\
$F_{10}=B_3B_4C_7C_{11}^{-1}A_8^{-1}C_6^{-1}$$\in$$H$.

Then, $F_{10}F_9^{-1}= B_3C_2^{-1}$$\in$$H$ and then
\mbox{$B_{i+1}C_i^{-1}$$\in$$H$ $\forall i$}.

It follows from Corollary~\ref{cor:thmKorkmaz} that
$H=\mod(\Sigma_g)$, completing the proof of the corollary.
\end{proof}

\begin{lemma} \label{lem:gen9} 
If $g \geq 11$, then the mapping class group $\mod(\Sigma_g)$ is generated 
by two elements $R$ and $A_1B_2C_4C_{g-1}^{-1}B_{g-3}^{-1}A_{g-4}^{-1}$.
\end{lemma}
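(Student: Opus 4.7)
The plan is to follow the template of Corollaries~\ref{cor:gen6} and~\ref{cor:gen7}. Set $F_1 = A_1B_2C_4C_{g-1}^{-1}B_{g-3}^{-1}A_{g-4}^{-1}$ and let $H$ denote the subgroup of $\mod(\Sigma_g)$ generated by $\{R, F_1\}$. The hypothesis $g \geq 11$ guarantees that the six indices $1,2,4,g-4,g-3,g-1$ are all distinct, with the ``low'' triple $\{1,2,4\}$ and the ``high'' triple $\{g-4,g-3,g-1\}$ well-separated. This separation ensures that the six curves $a_1,b_2,c_4,c_{g-1},b_{g-3},a_{g-4}$ are pairwise disjoint, so the six Dehn twists in $F_1$ commute, and it also provides enough room so that the intended $R$-shifts applied to one triple do not interfere with the other.

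First I would form $F_2 = RF_1R^{-1}$ and $F_3 = F_2^{-1}$, then compute the product $F_3F_1 \in H$. Most of the twelve Dehn twists in this product should collapse pairwise by commuting cancellations, leaving a shorter word $F_4$ in which a single pair of intersecting curves has been absorbed via the identity $t_at_bt_a^{-1} = t_{t_a(b)}$ into a new standard twist. Iterating this strategy (taking further inverses, conjugating by small powers of $R$, and multiplying with words already established in $H$), I expect to peel off one Dehn twist at a time and eventually produce at least one of the short relations $A_iB_i^{-1}$, $B_iC_i^{-1}$, and $C_iB_{i+1}^{-1}$ for some $i$. Conjugation by $R$ then propagates each such relation to every index, after which Corollary~\ref{cor:thmKorkmaz} forces $H = \mod(\Sigma_g)$.

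The main obstacle is the case-by-case bookkeeping. At each stage one must identify the unique pair of intersecting curves among the current product (the rest commuting freely), verify via the braid-type identity that the resulting absorbed curve is again a \emph{standard} $a_j$, $b_j$, or $c_j$ rather than some more complicated curve, and check that the indices produced lie inside the separated low and high blocks where the pattern remains predictable. The shared gap pattern $(1,2)$ in both triples $1,2,4$ and $g-4,g-3,g-1$ is precisely what allows the same sequence of manipulations to work uniformly across all $g \geq 11$; the bound $g \geq 11$ is the smallest value for which the high block ending at $g-4$ clears the low block ending at $4$ with enough slack to absorb all the intermediate $R$-shifts without index collisions.
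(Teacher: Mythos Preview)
Your high-level plan is exactly the paper's: manufacture new six-twist words in $H$ by conjugation, compare them to squeeze out two-twist words of type $A_iB_i^{-1}$, $C_iB_i^{-1}$, $C_iB_{i+1}^{-1}$, propagate via $R$, and finish with Corollary~\ref{cor:thmKorkmaz}. But your description of the opening move is off in a way that would stall the argument if carried out literally.

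The product $F_3F_1$ with $F_3=F_2^{-1}$ is a word in twelve Dehn twists about twelve \emph{pairwise distinct} curves; nothing ``collapses pairwise.'' The mechanism used in all of these corollaries is not word-level cancellation but conjugation: one picks $\phi\in H$ and applies it to the six curves underlying a known six-twist element, obtaining a new six-twist element $\phi(\,\cdot\,)\phi^{-1}\in H$. Here your choice $\phi=F_2^{-1}F_1$ does not send the curves of $F_3$ to standard curves, because the signs on the two intersecting pairs are mismatched: for instance $(F_2^{-1}F_1)(a_2)=A_2^{-1}B_2(a_2)$, which is not $b_2$ (only $t_at_b(a)=b$ and $t_a^{-1}t_b^{-1}(a)=b$ hold). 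The paper instead conjugates $F_2$ by $\phi=F_2F_1$; now both intersecting pairs $(a_2,b_2)$ and $(a_{g-3},b_{g-3})$ carry matching signs, giving $A_2B_2(a_2)=b_2$ and $A_{g-3}^{-1}B_{g-3}^{-1}(a_{g-3})=b_{g-3}$, so the conjugate is the honest six-twist element $B_2B_3C_5C_g^{-1}B_{g-2}^{-1}B_{g-3}^{-1}\in H$. From there the iteration you outline runs through, extracting in order $B_{i+1}C_i^{-1}$, then $C_iB_i^{-1}$, then $B_iA_i^{-1}$. So the fix is small---use $F_2F_1$ rather than $F_2^{-1}F_1$, and think ``conjugate then compare'' rather than ``multiply then cancel''---but without it the first step does not produce a usable $F_4$.
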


\begin{proof}
Let $F_1=A_1B_2C_4C_{g-1}^{-1}B_{g-3}^{-1}A_{g-4}^{-1}$. Let us denote by
$H$ the subgroup of $\mod(\Sigma_g)$ generated by the set
$\{ R, F_1\}$.

%
Let
\[
F_2 = RF_1R^{-1} = A_2B_3C_5C_g^{-1}B_{g-2}^{-1}A_{g-3}^{-1}.
\]

Then, $F_2F_1(a_2,b_3,c_5,c_g,b_{g-2},a_{g-3})=(b_2,b_3,c_5,c_g,b_{g-2},b_{g-3})$ so
that\\
$F_3=B_2B_3C_5C_g^{-1}B_{g-2}^{-1}B_{g-3}^{-1}\in H$.

Let \[
F_4 = R^{-1}F_3R = B_1B_2C_4C_{g-1}^{-1}B_{g-3}^{-1}B_{g-4}^{-1}
\]
and \[
F_5 = F_3^{-1} = B_{g-3}B_{g-2}C_gC_5^{-1}B_3^{-1}B_2^{-1}.
\]

Then, $F_5F_4(b_{g-3},b_{g-2},c_g,c_5,b_3,b_2)=(b_{g-3},b_{g-2},b_1,c_5,b_3,b_2)$ so
that\\
$F_6=B_{g-3}B_{g-2}B_1C_5^{-1}B_3^{-1}B_2^{-1}$$\in$$H$.

$F_6F_5^{-1}=B_1C_g^{-1}$$\in$$H$ and then by conjugating $B_1C_g^{-1}$ with $R$
iteratively, we get $B_{i+1}C_i^{-1}$$\in$$H$ $\forall i$.

Let \[
F_7 = (C_{g-3}B_{g-2}^{-1})(C_{g-4}B_{g-3}^{-1})F_6 = C_{g-3}C_{g-4}B_1C_5^{-1}B_3^{-1}B_2^{-1}
\]
and \[
F_8 = R^2F_7R^{-2} = C_{g-1}C_{g-2}B_3C_7^{-1}B_5^{-1}B_4^{-1}.
\]

Then, $F_8F_7(c_{g-1},c_{g-2},b_3,c_7,b_5,b_4)=(c_{g-1},c_{g-2},b_3,c_7,c_5,b_4)$ so
that\\
$F_9=C_{g-1}C_{g-2}B_3C_7^{-1}C_5^{-1}B_4^{-1}$$\in$$H$.

Then, $F_9F_8^{-1}= C_5B_5^{-1}$$\in$$H$ and then
\mbox{$C_iB_i^{-1}$$\in$$H$ $\forall i$}.

Let \[
F_{10} = F_1(B_{g-3}C_{g-3}^{-1}) = A_1B_2C_4C_{g-1}^{-1}C_{g-3}^{-1}A_{g-4}^{-1}
\]
and \[
F_{11} = RF_{10}R^{-1} = A_2B_3C_5C_g^{-1}C_{g-2}^{-1}A_{g-3}^{-1}.
\]

Then,
$F_{11}F_{10}(a_2,b_3,c_5,c_g,c_{g-2},a_{g-3})=(b_2,b_3,c_5,c_g,c_{g-2},a_{g-3})$\\
so that $F_{12}=B_2B_3C_5C_g^{-1}C_{g-2}^{-1}A_{g-3}^{-1}$$\in$$H$.

Then, $F_{12}F_{11}^{-1}= B_2A_2^{-1}$$\in$$H$ and then
\mbox{$B_iA_i^{-1}$$\in$$H$ $\forall i$}.

It follows from Corollary~\ref{cor:thmKorkmaz} that
$H=\mod(\Sigma_g)$, completing the proof of the lemma.
\end{proof}

\begin{lemma} \label{lem:gen10} 
If $g \geq 13$, then the mapping class group $\mod(\Sigma_g)$ is generated 
by two elements $R$ and $A_1B_2C_4C_{g-2}^{-1}B_{g-4}^{-1}A_{g-5}^{-1}$.
\end{lemma}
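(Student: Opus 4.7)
The plan is to run the same four-round shift-swap-subtract argument used in the proof of Lemma~\ref{lem:gen9}, noting that the new generator $F_1=A_1B_2C_4C_{g-2}^{-1}B_{g-4}^{-1}A_{g-5}^{-1}$ differs from the one in Lemma~\ref{lem:gen9} only by a uniform down-shift of the three negative-side indices. Let $H$ denote the subgroup of $\mod(\Sigma_g)$ generated by $\{R,F_1\}$. By Corollary~\ref{cor:thmKorkmaz} it is enough to show that the three short words $A_1B_1^{-1}$, $B_1C_1^{-1}$, and $C_1B_2^{-1}$ lie in $H$.

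First I will set $F_2=RF_1R^{-1}$ and consider the product $F_2F_1$. For $g\geq 13$ the positive block of curves $\{a_1,b_2,c_4\}$ is disjoint from every $R^k$-translate of the negative block $\{c_{g-2},b_{g-4},a_{g-5}\}$ that appears in the argument, so the only non-trivial interactions in $F_2F_1$ come from the two braid pairs $(a_2,b_2)$ and $(a_{g-4},b_{g-4})$. Each of these collapses an $A$-letter into a $B$-letter via the standard identity $t_a^{\pm 1}t_b^{\pm 1}(a)=b$ for curves meeting once, producing the shorter word $F_3=B_2B_3C_5C_{g-1}^{-1}B_{g-3}^{-1}B_{g-4}^{-1}\in H$.

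Next I will imitate the middle and final blocks of the proof of Lemma~\ref{lem:gen9}. Conjugating $F_3$ by $R^{-1}$ and taking the inverse yields $F_4$ and $F_5$; the product $F_5F_4$ swaps one $c$-letter for a $b$-letter, and multiplying by the appropriate shift isolates an element of the form $B_1C_{g-1}^{-1}$, which upon $R$-conjugation gives $B_{i+1}C_i^{-1}\in H$ for every $i$. Plugging two such corrections into the resulting word and repeating the swap-subtract procedure with an $R^2$-shift then produces $C_iB_i^{-1}\in H$ for every $i$. A third round, beginning from the modification $F_1\cdot(B_{g-4}C_{g-4}^{-1})$ and conjugating by $R$, yields $B_iA_i^{-1}\in H$ for every $i$. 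Combining these three families immediately gives the three elements demanded by Corollary~\ref{cor:thmKorkmaz}.

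The main obstacle is purely the index bookkeeping: at each round one must verify that the curves appearing in $F_k$ and in its $R^{\pm 1}$- or $R^{\pm 2}$-shift are pairwise disjoint, so that every braid pair to be collapsed really is isolated from the other, commuting twists. This is precisely what forces the bound $g\geq 13$; for smaller values of $g$ some index from the positive block $\{1,2,4\}$ collides with a shifted image of the negative block $\{g-2,g-4,g-5\}$, and the braid-versus-commutativity dichotomy on which the entire argument rests breaks down. Once disjointness is checked across the four rounds, the conclusion follows at once from Corollary~\ref{cor:thmKorkmaz}.
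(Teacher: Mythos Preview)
Your first round is fine and matches the paper: $F_2F_1$ does collapse the two $A$-letters and produces $F_3=B_2B_3C_5C_{g-1}^{-1}B_{g-3}^{-1}B_{g-4}^{-1}\in H$.

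The genuine gap is in your second round. You claim that with $F_4=R^{-1}F_3R$ and $F_5=F_3^{-1}$ the product $F_5F_4$ ``swaps one $c$-letter for a $b$-letter'', exactly as in Lemma~\ref{lem:gen9}. It does not. In Lemma~\ref{lem:gen9} the swap worked because the negative $c$-curve of $F_3$ there is $c_g$, and after the $R^{-1}$-shift the curve $b_1$ appears in $F_4$; since $c_g$ meets $b_1$ once, the braid collapse $t_{c_g}t_{b_1}(c_g)=b_1$ occurs. Here the negative $c$-curve of $F_3$ is $c_{g-1}$, and $F_4=R^{-1}F_3R=B_1B_2C_4C_{g-2}^{-1}B_{g-4}^{-1}B_{g-5}^{-1}$ contains no curve meeting $c_{g-1}$ (for $g\ge 13$ every curve of $F_4$ is disjoint from every curve of $F_5$ except for the shared $b$'s, which only fix themselves). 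Hence $F_5F_4$ fixes all six curves of $F_5$ and you recover nothing but $F_5$ itself; no $B_1C_{g-1}^{-1}$ (or anything else) is produced. The down-shift of the negative indices by one is precisely what breaks the wrap-around intersection on which Lemma~\ref{lem:gen9} relies, so the argument cannot be ``run the same''.

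The paper circumvents this by inserting an extra step: it first forms the four-letter word $F_2F_3^{-1}=A_2B_2^{-1}A_{g-4}^{-1}B_{g-4}$, conjugates by $R$, and multiplies back into $F_3$ to obtain $F_6=B_2A_3C_5C_{g-1}^{-1}A_{g-3}^{-1}B_{g-4}^{-1}$. Only then does an $R^{-2}$-shift of $F_6$ bring in a $b_g$ that meets $c_{g-1}$, and the swap finally goes through to give $C_{g-1}B_g^{-1}\in H$. Your third and fourth rounds (for $C_iB_i^{-1}$ and $B_iA_i^{-1}$) are in the right spirit and do match the paper once the correct $C_iB_{i+1}^{-1}$ family is available, but as written they depend on the failed second round.
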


\begin{proof}
Let $F_1=A_1B_2C_4C_{g-2}^{-1}B_{g-4}^{-1}A_{g-5}^{-1}$. Let us denote by
$H$ the subgroup of $\mod(\Sigma_g)$ generated by the set
$\{ R, F_1\}$.

%
Let
\[
F_2 = RF_1R^{-1} = A_2B_3C_5C_{g-1}^{-1}B_{g-3}^{-1}A_{g-4}^{-1}.
\]

Then,
$F_2F_1(a_2,b_3,c_5,c_{g-1},b_{g-3},a_{g-4})=(b_2,b_3,c_5,c_{g-1},b_{g-3},b_{g-4})$
so that $F_3=B_2B_3C_5C_{g-1}^{-1}B_{g-3}^{-1}B_{g-4}^{-1}\in H$.

Let \[
F_4 = F_2F_3^{-1} = A_2B_2^{-1}A_{g-4}^{-1}B_{g-4}
\]
\[
F_5 = RF_4R^{-1} = A_3B_3^{-1}A_{g-3}^{-1}B_{g-3}
\]
\[
F_6 = F_5F_3 = B_2A_3C_5C_{g-1}^{-1}A_{g-3}^{-1}B_{g-4}^{-1}
\]
\[
F_7 = R^{-2}F_6R^2 = B_gA_1C_3C_{g-3}^{-1}A_{g-5}^{-1}B_{g-6}^{-1}
\]
and \[
F_8 = F_7^{-1} = B_{g-6}A_{g-5}C_{g-3}C_3^{-1}A_1^{-1}B_g^{-1}.
\]

Then,
$F_8F_6(b_{g-6},a_{g-5},c_{g-3},c_3,a_1,b_g)=(b_{g-6},a_{g-5},c_{g-3},c_3,a_1,c_{g-1})$
so that $F_9=B_{g-6}A_{g-5}C_{g-3}C_3^{-1}A_1^{-1}C_{g-1}^{-1}$$\in$$H$.

$F_9F_8^{-1}=C_{g-1}B_g^{-1}$$\in$$H$ and then by conjugating $C_{g-1}B_g^{-1}$ with
$R$
iteratively, we get $C_iB_{i+1}^{-1}$$\in$$H$ $\forall i$.

Let \[
F_{10} = F_3(C_{g-1}B_g^{-1}) = B_2B_3C_5B_g^{-1}B_{g-3}^{-1}B_{g-4}^{-1}
\]
and \[
F_{11} = R^2F_{10}R^{-2} = B_4B_5C_7B_2^{-1}B_{g-1}^{-1}B_{g-2}^{-1}.
\]

Then,
$F_{11}F_{10}(b_4,b_5,c_7,b_2,b_{g-1},b_{g-2})=(b_4,c_5,c_7,b_2,b_{g-1},b_{g-2})$\\
so that $F_{12}=B_4C_5C_7B_2^{-1}B_{g-1}^{-1}B_{g-2}^{-1}$$\in$$H$.

Then, $F_{12}F_{11}^{-1}= C_5B_5^{-1}$$\in$$H$ and then
\mbox{$C_iB_i^{-1}$$\in$$H$ $\forall i$}.

Let \[
F_{13} = F_1(B_{g-4}C_{g-4}^{-1}) = A_1B_2C_4C_{g-2}^{-1}C_{g-4}^{-1}A_{g-5}^{-1}
\]
and \[
F_{14} = RF_{13}R^{-1} = A_2B_3C_5C_{g-1}^{-1}C_{g-3}^{-1}A_{g-4}^{-1}.
\]

Then,
$F_{14}F_{13}(a_2,b_3,c_5,c_{g-1},c_{g-3},a_{g-4})=(b_2,b_3,c_5,c_{g-1},c_{g-3},a_{g-4})$
so that $F_{15}=B_2B_3C_5C_{g-1}^{-1}C_{g-3}^{-1}A_{g-4}^{-1}$$\in$$H$.

Then, $F_{15}F_{14}^{-1}= B_2A_2^{-1}$$\in$$H$ and then
\mbox{$B_iA_i^{-1}$$\in$$H$ $\forall i$}.

It follows from Corollary~\ref{cor:thmKorkmaz} that
$H=\mod(\Sigma_g)$, completing the proof of the corollary.
\end{proof}

\begin{lemma} \label{lem:gen11} 
If $k \geq 7$ and $g \geq 2k+1$, then the mapping class group $\mod(\Sigma_g)$ is generated 
by elements $R$ and $A_1B_2C_4C_{g-k+4}^{-1}B_{g-k+2}^{-1}A_{g-k+1}^{-1}$.
\end{lemma}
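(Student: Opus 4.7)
Set $F_1 = A_1B_2C_4C_{g-k+4}^{-1}B_{g-k+2}^{-1}A_{g-k+1}^{-1}$ and let $H$ denote the subgroup of $\mod(\Sigma_g)$ generated by $R$ and $F_1$. By Corollary~\ref{cor:thmKorkmaz}, it suffices to exhibit inside $H$ one element of each of the three types $A_iB_i^{-1}$, $C_iB_i^{-1}$, and $B_{i+1}C_i^{-1}$; conjugating by powers of $R$ then produces the full families.

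The plan is to follow the template of Lemmas~\ref{lem:gen9} and~\ref{lem:gen10}, which treat the cases $k=5$ and $k=6$. First I would compute $F_2 = RF_1R^{-1} = A_2B_3C_5C_{g-k+5}^{-1}B_{g-k+3}^{-1}A_{g-k+2}^{-1}$. The hypothesis $g\geq 2k+1$ gives $g-k+1\geq k+2$, so after applying any rotation $R^j$ with $0\leq j\leq k-4$ used in the argument, the ``left cluster'' $\{a_1,a_2,b_2,b_3,c_4,c_5,\ldots\}$ remains disjoint from the ``right cluster'' $\{a_{g-k+1},a_{g-k+2},b_{g-k+2},b_{g-k+3},c_{g-k+4},c_{g-k+5}\}$. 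Consequently the left and right halves of $F_1$, $F_2$, and their successors commute pairwise, and any braid-type reduction performed on one half leaves the other untouched.

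Next I would simplify the product $F_2F_1$ using the braid identity $t_xt_y(x)=y$ for once-intersecting curves $x,y$: the pair $(B_2,A_2)$ interacting with a $C$-factor converts $a_2$ into $b_2$, and symmetrically $a_{g-k+1}$ is converted into $b_{g-k+1}$, yielding the element $F_3=B_2B_3C_5C_{g-k+5}^{-1}B_{g-k+3}^{-1}B_{g-k+2}^{-1}\in H$, the direct analog of $F_3$ in Lemma~\ref{lem:gen10}. I would then apply an appropriate power of $R$ (namely $R^{-(k-4)}$, matching the shifts $R^{-1}$ for $k=5$ and $R^{-2}$ for $k=6$) to a suitable variant of $F_3$ so that its right cluster aligns with the left cluster of $F_3^{-1}$, and a single braid reduction in the product extracts an element of the form $B_{i+1}C_i^{-1}$. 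Two further rounds, each beginning with $F_1$ (or $F_3$) modified by the previously found relation to replace an $A$ or a $C$ by a $B$, successively yield $C_iB_i^{-1}$ and $A_iB_i^{-1}$.

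The main obstacle is the index bookkeeping: as $k$ grows, the rotation exponent aligning the two clusters grows with it, and one must verify that each disjointness and each once-intersecting configuration exploited in Lemma~\ref{lem:gen10} continues to hold under the shift by $k-4$. This is exactly what the uniform bound $g\geq 2k+1$ guarantees. Once that is checked, every step transcribes from Lemma~\ref{lem:gen10} with routine index shifts, and the conclusion follows from Corollary~\ref{cor:thmKorkmaz}.
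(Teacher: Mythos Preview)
Your proposal is essentially correct in its first stage but contains a genuine gap at the key reduction step. You are right that $F_2=RF_1R^{-1}$ and that $F_2F_1$ converts $a_2\mapsto b_2$ and $a_{g-k+2}\mapsto b_{g-k+2}$, producing $F_3=B_2B_3C_5C_{g-k+5}^{-1}B_{g-k+3}^{-1}B_{g-k+2}^{-1}\in H$ exactly as in Lemma~\ref{lem:gen10}. The difficulty is the next move. Following the Lemma~\ref{lem:gen10} template literally, one forms $F_6=B_2A_3C_5C_{g-k+5}^{-1}A_{g-k+3}^{-1}B_{g-k+2}^{-1}$, shifts by $R^{-(k-4)}$ to obtain $F_7$, and attempts the braid reduction in $F_8F_6$ with $F_8=F_7^{-1}$. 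The intended single interaction is between $c_{g-k+5}$ (in $F_6$) and $b_{g-k+6}$ (in $F_8$). But $F_8$ also contains the curve $c_{g-2k+9}$, and for $k=7$ one has $g-2k+9=g-5=g-k+2$, so $b_{g-k+2}$ from $F_6$ meets $c_{g-2k+9}$ from $F_8$; for $k=8$ the collision is $g-2k+10=g-6=g-k+2$. In these cases $F_8F_6$ moves \emph{two} of the six curves rather than one, the conjugate is no longer a product of standard twists, and no element of type $B_{i+1}C_i^{-1}$ falls out. The claim that ``every step transcribes from Lemma~\ref{lem:gen10} with routine index shifts'' therefore fails precisely at the small values $k=7,8$ that the lemma must cover.

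The paper avoids this by a different alignment: instead of $RF_1R^{-1}$, it takes $F_2=R^{k-3}F_1R^{3-k}$, which rotates the right cluster so that its $c$-curve becomes $c_1$. Now $c_1$ meets $b_2$ from the left cluster of $F_1$, and one checks (using $k\geq 7$ and $g\geq 2k+1$) that this is the \emph{only} interaction; the conjugate $F_4$ differs from $F_3$ in a single factor, yielding $B_2C_1^{-1}$ and hence all $B_{i+1}C_i^{-1}$. With those in hand, a second pass using $F_1$ modified by $B_{g-k+2}C_{g-k+1}^{-1}$ gives $B_iA_i^{-1}$, and a further rotation by $R^{k-2}$ gives $B_iC_i^{-1}$. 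If you want to repair your argument, the cleanest fix is to adopt this large rotation $R^{k-3}$ at the first step; the uniform separation it creates is exactly what prevents the extra collisions you would otherwise have to deal with case by case.
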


\begin{proof}
Let $F_1=A_1B_2C_4C_{g-k+4}^{-1}B_{g-k+2}^{-1}A_{g-k+1}^{-1}$. Let us denote by
$H$ the subgroup of $\mod(\Sigma_g)$ generated by the set
$\{ R, F_1\}$.

%
Let
\[
F_2 = R^{k-3}F_1R^{3-k} = A_{k-2}B_{k-1}C_{k+1}C_1^{-1}B_{g-1}^{-1}A_{g-2}^{-1}
\]
and \[
F_3 = F_2^{-1} = A_{g-2}B_{g-1}C_1C_{k+1}^{-1}B_{k-1}^{-1}A_{k-2}^{-1}.
\]

$F_3F_1(a_{g-2},b_{g-1},c_1,c_{k+1},b_{k-1},a_{k-2})=(a_{g-2},b_{g-1},b_2,c_{k+1},b_{k-1},a_{k-2})$ so
that $F_4=A_{g-2}B_{g-1}B_2C_{k+1}^{-1}B_{k-1}^{-1}A_{k-2}^{-1}\in H$.

$F_4F_3^{-1}=B_2C_1^{-1}$$\in$$H$ and then by conjugating $B_2C_1^{-1}$ with $R$
iteratively, we get $B_{i+1}C_i^{-1}$$\in$$H$ $\forall i$.

Let \[
F_5 = F_1(B_{g-k+2}C_{g-k+1}^{-1}) = A_1B_2C_4C_{g-k+4}^{-1}C_{g-k+1}^{-1}A_{g-k+1}^{-1}
\]
and \[
F_6 = RF_5R^{-1} = A_2B_3C_5C_{g-k+5}^{-1}C_{g-k+2}^{-1}A_{g-k+2}^{-1}.
\]

$F_6F_5(a_2,b_3,c_5,c_{g-k+5},c_{g-k+2},a_{g-k+2})=(b_2,b_3,c_5,c_{g-k+5},c_{g-k+2},a_{g-k+2})$\\
so that $F_7=B_2B_3C_5C_{g-k+5}^{-1}C_{g-k+2}^{-1}A_{g-k+2}^{-1}$$\in$$H$.

Then, $F_7F_6^{-1}= B_2A_2^{-1}$$\in$$H$ and then
\mbox{$B_iA_i^{-1}$$\in$$H$ $\forall i$}.

Let \[
F_8 = R^{k-2}F_6R^{2-k} = A_kB_{k+1}C_{k+3}C_3^{-1}Cg^{-1}A_g^{-1}
\]
and \[
F_9 = F_8^{-1} = A_gC_gC_3C_{k-3}^{-1}B_{k+1}^{-1}A_k^{-1}.
\]

Then,
$F_9F_6(a_g,c_g,c_3,c_{k+3},b_{k+1},a_k)=(a_g,c_g,b_3,c_{k+3},b_{k+1},a_k)$\\
so that $F_{10}=A_gC_gB_3C_{k-3}^{-1}B_{k+1}^{-1}A_k^{-1}$$\in$$H$.

Then, $F_{10}F_9^{-1}= B_3C_3^{-1}$$\in$$H$ and then
\mbox{$B_iC_i^{-1}$$\in$$H$ $\forall i$}.

It follows from Corollary~\ref{cor:thmKorkmaz} that
$H=\mod(\Sigma_g)$, completing the proof of the lemma.
\end{proof}

\begin{corollary} \label{cor:gen8} 
If $k \geq 5$ and $g \geq 2k+1$, then the mapping class group $\mod(\Sigma_g)$ is generated 
by elements $R$ and $A_1B_2C_4C_{g-k+4}^{-1}B_{g-k+2}^{-1}A_{g-k+1}^{-1}$.
\end{corollary}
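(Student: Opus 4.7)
The plan is to observe that Corollary \ref{cor:gen8} is just the consolidation of the three preceding results, stated uniformly in terms of the parameter $k$. The generating element
$$A_1B_2C_4C_{g-k+4}^{-1}B_{g-k+2}^{-1}A_{g-k+1}^{-1}$$
is exactly the one used in Lemma \ref{lem:gen11}, and substituting the specific values $k=5$ and $k=6$ into this formula recovers the generators used in Lemmas \ref{lem:gen9} and \ref{lem:gen10} respectively (indeed, $k=5$ gives the indices $(g-1,g-3,g-4)$ matching Lemma \ref{lem:gen9}, and $k=6$ gives $(g-2,g-4,g-5)$ matching Lemma \ref{lem:gen10}).

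First I would verify these index identifications explicitly so that the reader sees the three lemmas are truly instances of a single pattern. Then the proof becomes a trivial case split: if $k=5$, then the hypothesis $g\geq 2k+1=11$ is exactly the hypothesis of Lemma \ref{lem:gen9}, so that lemma directly gives $\langle R, F_1\rangle = \mod(\Sigma_g)$; if $k=6$, the hypothesis $g\geq 13$ is exactly that of Lemma \ref{lem:gen10}, which gives the same conclusion; and if $k\geq 7$, the hypothesis $g\geq 2k+1$ matches Lemma \ref{lem:gen11}. In all three cases the conclusion is the same, so the corollary follows.

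No step here is a real obstacle, since all the technical work has already been carried out in the three lemmas; the only thing to be careful about is confirming that the unified formula $A_1B_2C_4C_{g-k+4}^{-1}B_{g-k+2}^{-1}A_{g-k+1}^{-1}$ substitutes correctly into each lemma, and that the three hypotheses $g \geq 11$, $g \geq 13$, $g \geq 2k+1$ fit together under the single assumption $g \geq 2k+1$ for $k \geq 5$. The point of stating the result as a corollary is precisely to record this uniformity, which will be useful in the proof of Theorem~\ref{thm:3}.
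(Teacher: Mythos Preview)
Your proposal is correct and matches the paper's own proof, which simply states that the corollary ``directly follows from Lemma~\ref{lem:gen9}, Lemma~\ref{lem:gen10} and Lemma~\ref{lem:gen11}''; you have merely spelled out the index substitutions and hypothesis-matching that the paper leaves implicit. One small inaccuracy in your closing remark: the corollary is actually invoked in the proof of Theorem~\ref{thm:1}, not Theorem~\ref{thm:3}.
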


\begin{proof}
It directly follows from Lemma~\ref{lem:gen9}, Lemma~\ref{lem:gen10} and Lemma~\ref{lem:gen11}.
\end{proof}

\bigskip

\section{Main Results}

\begin{lemma} \label{lem:order}
If $R$ is an element of order k in a group $G$ and if $x$ and $y$ are 
elements in $G$ satisfying $RxR^{-1}=y$, then the order of $Rxy^{-1}$ is also k.
 \end{lemma}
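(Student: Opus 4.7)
The plan is to observe that $Rxy^{-1}$ is conjugate to $R$, so they must have the same order. The identity $RxR^{-1} = y$ can be rewritten in the equivalent form $Rx = yR$, simply by multiplying both sides on the right by $R$. This is the key algebraic rearrangement.

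Substituting $Rx = yR$ into the expression $Rxy^{-1}$ yields
\[
Rxy^{-1} = (Rx)y^{-1} = (yR)y^{-1} = yRy^{-1},
\]
which exhibits $Rxy^{-1}$ as a conjugate of $R$. Since conjugation by any group element preserves the order of an element (because $(yRy^{-1})^n = yR^ny^{-1}$, which equals the identity if and only if $R^n = 1$), we conclude that $Rxy^{-1}$ has the same order as $R$, namely $k$.

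There is no real obstacle here; the proof is a two-line manipulation, and the lemma is stated mostly for bookkeeping purposes — it explains why, in the constructions of Section~3, taking products of the form $R \cdot (\text{Dehn twist word}) \cdot (\text{its $R$-conjugate})^{-1}$ automatically produces elements of order $k$ as desired.
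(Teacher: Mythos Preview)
Your proof is correct and essentially identical to the paper's own argument: both observe that $Rxy^{-1} = yRy^{-1}$ is a conjugate of $R$ and conclude that the orders agree. The paper spells out the two implications $(yRy^{-1})^k = 1$ and $(yRy^{-1})^l = 1 \Rightarrow R^l = 1$ explicitly rather than citing the general fact that conjugation preserves order, but the content is the same.
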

\begin{proof}
$(Rxy^{-1})^{k}=(yRy^{-1})^{k}=yR^ky^{-1}=1$.\\
On the other hand, if $(Rxy^{-1})^{l}=1$ then $(Rxy^{-1})^{l}=(yRy^{-1})^{l}=yR^ly^{-1}=1$ i.e. $R^l=1$ and hence $k \mid l$.
\end{proof}

Now, we can prove Theorem~\ref{thm:1}.

\begin{proof}
If $g=10$, let $H_{10}$  be the subgroup of $\mod(\Sigma_{10})$ generated by the set $\{ R, R^4A_1C_1B_3B_7^{-1}C_5^{-1}A_5^{-1}\}$.
Then, $H_{10} =$ $\mod(\Sigma_{10})$ by Corollary ~\ref{cor:gen5}. 
Then, we are done by Lemma ~\ref{lem:order} since $R^4(A_1C_1B_3)R^{-4}=A_5C_5B_7$.
Note that, order of $R^4$ is clearly $5$ and hence order of the element $R^4(A_1C_1B_3)(A_5C_5B_7)^{-1}$ is also $5$ by Lemma ~\ref{lem:order}
since $R^4(a_1)=a_5$, $R^4(c_1)=c_5$ and $R^4(b_3)=b_7$ implies $R^4(A_1C_1B_3)R^{-4} = A_5C_5B_7$.

If $g=9$, let $H_9$  be the subgroup of $\mod(\Sigma_9)$ generated by the set $\{ R, R^3B_1A_3C_5C_8^{-1}A_6^{-1}B_4^{-1}\}$.
Then, $H_9 =$ $\mod(\Sigma_9)$ by Corollary ~\ref{cor:gen13}. 
Then, we are done by Lemma ~\ref{lem:order} since $R^3(B_1A_3C_5)R^{-3}=B_4A_6C_8$.

If $g=8$, let $H_8$  be the subgroup of $\mod(\Sigma_8)$ generated by the set $\{ R, R^2B_1A_5C_5C_7^{-1}A_7^{-1}B_3^{-1}\}$.
Then, $H_8 =$ $\mod(\Sigma_8)$ by Corollary ~\ref{cor:gen12}.
Then, we are done by Lemma ~\ref{lem:order} since $R^2(B_1A_5C_5)R^{-2}=B_3A_7C_7$.

If $g=7$, let $H_7$ be the subgroup of $\mod(\Sigma_7)$ generated by the set $\{ R, RC_1B_4A_6A_7^{-1}B_5^{-1}C_2^{-1}\}$.
Then, $H_7 =$ $\mod(\Sigma_7)$ by Corollary ~\ref{cor:gen2}. 
Then, we are done by Lemma ~\ref{lem:order} since $R(C_1B_4A_6)R^{-1}=C_2B_5A_7$.

The remaining part of the proof is the case of $g\geq 11$.
Let $k=g/g'$ so that $k$ is the greatest divisor of $g$ such that $k$ is strictly less than $g/2$.
Clearly, k can be any positive integer but three.

If $k=2$, let $K_2$ be the subgroup of $\mod(\Sigma_g)$ generated by the set $\{ R, R^2A_1B_4C_8C_{10}^{-1}B_6^{-1}A_3^{-1}\}$.
Then, $K_2 =$ $\mod(\Sigma_g)$ by Corollary ~\ref{cor:gen6}. 
Then, we are done by Lemma ~\ref{lem:order} since $R^2(A_1B_4C_8)R^{-2}=A_3B_6C_{10}$.

If $k=4$, let $K_4$ be the subgroup of $\mod(\Sigma_g)$ generated by the set $\{ R, R^4B_1A_3C_6C_{10}^{-1}A_7^{-1}B_5^{-1}\}$.
Then, $K_4 =$ $\mod(\Sigma_g)$ by Corollary ~\ref{cor:gen7}. 
Then, we are done by Lemma ~\ref{lem:order} since $R^4(B_1A_3C_6)R^{-4}=B_5A_7C_{10}$.

If $k=1$ or $k=5$, let $K_5$ be the subgroup of $\mod(\Sigma_g)$ generated by the set $\{R, R^{-5}A_1B_2C_4C_{g-1}^{-1}B_{g-3}^{-1}A_{g-4}^{-1}\}$.
Then, $K_5 =$ $\mod(\Sigma_g)$ by Corollary ~\ref{cor:gen8}. 
Then, we are done by Lemma ~\ref{lem:order} since $R^{-5}(A_1B_2C_4)R^5=A_{g-4}B_{g-3}C_{g-1}$.

If $k=6$, let $K_6$ be the subgroup of $\mod(\Sigma_g)$ generated by the set $\{ R, R^{-6}A_1B_2C_4C_{g-2}^{-1}B_{g-4}^{-1}A_{g-5}^{-1}\}$.
Then, $K_6 =$ $\mod(\Sigma_g)$ by Corollary ~\ref{cor:gen8}. 
Then, we are done by Lemma~\ref {lem:order} since $R^{-6}(A_1B_2C_4)R^6=A_{g-5}B_{g-4}C_{g-2}$.

If $k\geq 7$, let $K$ be the subgroup of $\mod(\Sigma_g)$ generated by the set $\{ R, R^{-k}A_1B_2C_4C_{g-k+4}^{-1}B_{g-k+2}^{-1}A_{g-k+1}^{-1}\}$.
Then, $K =$ $\mod(\Sigma_g)$ by Corollary ~\ref{cor:gen8}. 
Then, we are done by Lemma~\ref {lem:order} since $R^{-k}(A_1B_2C_4)R^k = A_{g-k+1}B_{g-k+2}C_{g-k+4}$.
\end{proof}

Finally, we prove Theorem~\ref{thm:2}.
\begin{proof}
If $g=6$, let $H_6$ be the subgroup of $\mod(\Sigma_6)$ generated by the set $\{ R, RC_1B_4A_6A_1^{-1}B_5^{-1}C_2^{-1}\}$.
Then, $H_6 =$ $\mod(\Sigma_6)$ by Corollary ~\ref{cor:gen1}. 
Then, we are done by Lemma ~\ref{lem:order} since $R(C_1B_4A_6)R^{-1}=C_2B_5A_1$.
Note that, since $R(c_1)=c_2$, $R(b_4)=b_5$ and $R(a_6)=a_1$, we have
$R(C_1B_4A_6)R^{-1} = C_2B_5A_1$
which implies order of the element $R(C_1B_4A_6)(C_2B_5A_1)^{-1}$ is $g$.

If $g=7$, let $H_7$ be the subgroup of $\mod(\Sigma_7)$ generated by the set $\{ R, RC_1B_4A_6A_7^{-1}B_5^{-1}C_2^{-1}\}$.
Then, $H_7 =$ $\mod(\Sigma_7)$ by Corollary ~\ref{cor:gen2}. 
Then, we are done by Lemma ~\ref{lem:order} since $R(C_1B_4A_6)R^{-1}=C_2B_5A_7$.

If $g=8$, let $H_8$ be the subgroup of $\mod(\Sigma_8)$ generated by the set $\{ R, RB_1C_4A_7A_8^{-1}C_5^{-1}B_2^{-1}\}$.
Then, $H_8 =$ $\mod(\Sigma_8)$ by Corollary ~\ref{cor:gen3}. 
Then, we are done by Lemma ~\ref{lem:order} since $R(B_1C_4A_7)R^{-1}=B_2C_5A_8$.

If $g\geq 9$, let $H_9$ be the subgroup of $\mod(\Sigma_g)$ generated by the set $\{ R, RC_1B_4A_7A_8^{-1}B_5^{-1}C_2^{-1}\}$.
Then, $H_9 =$ $\mod(\Sigma_g)$ by Corollary ~\ref{cor:gen4}. 
Then, we are done by Lemma ~\ref{lem:order} since $R(C_1B_4A_7)R^{-1}=C_2B_5A_8$.
%
%
%
\end{proof}

\section{Further Results}

In this section, we prove Theorem ~\ref{thm:3} which states as: for $g\geq 3k^2+4k+1$ and any positive integer $k$, the mapping class group $\mod(\Sigma_g)$ is generated by 
two elements of order $g/\gcd (g,k)$. 

Korkmaz showed the following in the proof of Theorem ~\ref{thm:thmKorkmaz}.
\begin{theorem} \label{thm:thmKorkmaz2}
If $g\geq 3$, then the mapping class group $\mod(\Sigma_g)$ is generated by the elements
\(
A_iA_j^{-1},B_iB_j^{-1}, C_iC_j^{-1} \forall i, j. \)
 \end{theorem}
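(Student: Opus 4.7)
The strategy is to combine Humphries' Theorem \ref{thm:Humphries} with careful conjugations inside the subgroup $H \leq \mod(\Sigma_g)$ generated by $\{A_iA_j^{-1},\, B_iB_j^{-1},\, C_iC_j^{-1} : 1 \leq i,j \leq g\}$. It suffices to show that each of the Humphries generators $A_1, A_2, B_1, \ldots, B_g, C_1, \ldots, C_{g-1}$ lies in $H$; since $A_j = (A_jA_1^{-1}) \cdot A_1$ and analogously for the $B$ and $C$ families, the whole problem reduces to locating a single Dehn twist of each type in $H$.

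The first step is to enlarge the explicit inventory of elements inside $H$ via conjugation, using the identity $h t_c h^{-1} = t_{h(c)}$ together with the abundant commutations among disjoint $a_i, b_i, c_i$ and the braid relations among intersecting pairs. For instance, for any $k \notin \{1, 2\}$ the twist $B_k$ commutes with both $A_1$ and $A_2$, while $B_1$ commutes with $A_2$ and braid-relates with $A_1$; a direct computation then gives
\[
(B_1 B_k^{-1})(A_1 A_2^{-1})(B_1 B_k^{-1})^{-1}(A_1 A_2^{-1})^{-1} \;=\; t_{B_1(a_1)}\, A_1^{-1},
\]
so $H$ already contains twist-differences $t_\alpha t_\beta^{-1}$ whose underlying curves meet in a single point, not merely disjoint ones. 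Iterating such conjugations systematically produces a rich family of twist-difference elements involving curves of many topological types.

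The main obstacle is the extraction of a single Dehn twist from $H$, since every word visibly built from the generators has exponent sum zero in each Dehn-twist variable. For $g \geq 3$ there is no intrinsic abelian obstruction — the mapping class group is perfect — and the natural mechanism for exhibiting a single twist is the \emph{lantern relation}
\[
t_{x_1} t_{x_2} t_{x_3} t_{x_4} \;=\; t_{y_{12}}\, t_{y_{13}}\, t_{y_{23}},
\]
valid on any embedded four-holed sphere, which equates a product of four twists with a product of three and therefore breaks the naive count. The plan is to position a lantern inside $\Sigma_g$ whose seven curves are expressible as Humphries curves $a_i, b_i, c_i$ or as their images under elements of $H$ already produced, and then rearrange the relation into a product of difference-type factors to conclude $A_1 \in H$. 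Parallel lantern arguments for $b$- and $c$-type curves (or transport of $A_1$ via elements of $H$) yield $B_1, C_1 \in H$; applying $A_j = (A_jA_1^{-1})A_1$ and its analogues then recovers every Humphries generator, and Theorem \ref{thm:Humphries} finishes the proof.
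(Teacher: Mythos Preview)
Your proposal follows essentially the same strategy as the paper's sketch: use conjugation by elements of $H$ to manufacture ``mixed'' twist-differences across the $a$-, $b$-, and $c$-families, then invoke the lantern relation to extract a single Dehn twist, after which every Humphries generator follows from $A_j=(A_jA_1^{-1})A_1$ and its analogues. The paper is a little more economical in the conjugation step---it directly produces $B_1A_3^{-1}$ and $C_1A_3^{-1}$ by noting $A_1A_2^{-1}B_1B_2^{-1}(a_1,a_3)=(b_1,a_3)$ and $B_1A_3^{-1}C_1C_2^{-1}(b_1,a_3)=(c_1,a_3)$, which are exactly the mixed differences needed for the lantern---whereas your illustrative commutator yields $t_{B_1(a_1)}A_1^{-1}$, a less directly usable element; but the underlying mechanism is the same, and both your write-up and the paper leave the actual lantern computation as a reference to Korkmaz's original argument rather than spelling it out.
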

Sketch of the proof is as follows:
$A_1A_2^{-1}B_1B_2^{-1}(a_1,a_3) = (b_1,a_3)$.
$B_1A_3^{-1}C_1C_2^{-1}(b_1,a_3) = (c_1,a_3)$.
Then, Korkmaz showed that $A_3$ can be generated by these elements by using lantern relation.
Hence, $A_i = (A_iA_3^{-1})A_3$,
$B_i = (B_iB_1^{-1})(B_1A_3^{-1})A_3$ and
$C_i = (C_iC_1^{-1})(C_1A_3^{-1})A_3$ are generated by given elements.
This finishes the proof.

Now, we prove the next statement as a corollary to Theorem ~\ref{thm:thmKorkmaz2}.
\begin{corollary} \label{generating}
If $g\geq 3$, then the mapping class group $\mod(\Sigma_g)$ is generated by the elements
\(
A_iB_i^{-1},C_iB_i^{-1}, C_iB_{i+1}^{-1} \forall i. \)
 \end{corollary}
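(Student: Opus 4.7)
The plan is to deduce this corollary from Theorem~\ref{thm:thmKorkmaz2} by showing that each of the generators $A_iA_j^{-1}$, $B_iB_j^{-1}$, $C_iC_j^{-1}$ of $\mod(\Sigma_g)$ appearing there can be written as a product of elements from the proposed set $S=\{A_iB_i^{-1},\,C_iB_i^{-1},\,C_iB_{i+1}^{-1}\}$. Let $H$ denote the subgroup generated by $S$; the goal is to show $H=\mod(\Sigma_g)$.

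First I would produce the elements $B_iB_{i+1}^{-1}$. Since both $C_iB_i^{-1}$ and $C_iB_{i+1}^{-1}$ lie in $S$, their combination
\[
(C_iB_i^{-1})^{-1}(C_iB_{i+1}^{-1}) \;=\; B_iC_i^{-1}\,C_iB_{i+1}^{-1} \;=\; B_iB_{i+1}^{-1}
\]
is in $H$ for every $i$. Telescoping these consecutive ones then gives $B_iB_j^{-1}\in H$ for all indices $i,j$.

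Once all $B_iB_j^{-1}$ are available, I would obtain $A_iA_j^{-1}$ and $C_iC_j^{-1}$ via the identities
\[
A_iA_j^{-1} \;=\; (A_iB_i^{-1})(B_iB_j^{-1})(B_jA_j^{-1}),\qquad
C_iC_j^{-1} \;=\; (C_iB_i^{-1})(B_iB_j^{-1})(B_jC_j^{-1}),
\]
each of whose factors is in $H$ by what was just established together with the fact that inverses of elements of $S$ lie in $H$. Thus every generator listed in Theorem~\ref{thm:thmKorkmaz2} belongs to $H$, and applying that theorem yields $H=\mod(\Sigma_g)$.

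No step is really an obstacle here; the argument is essentially a two-line reduction to Theorem~\ref{thm:thmKorkmaz2}. The only thing to be careful about is indexing modulo $g$, but since both the set $S$ and the target generating set are indexed over all residues, this causes no issue.
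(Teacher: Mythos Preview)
Your proof is correct and essentially identical to the paper's own argument: both reduce to Theorem~\ref{thm:thmKorkmaz2} by first obtaining $B_iB_{i+1}^{-1}=(B_iC_i^{-1})(C_iB_{i+1}^{-1})$, telescoping to all $B_iB_j^{-1}$, and then sandwiching to get $A_iA_j^{-1}$ and $C_iC_j^{-1}$.
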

\begin{proof}
Let us denote by $H$ the subgroup generated by the elements 
\(
A_iB_i^{-1},C_iB_i^{-1}, C_iB_{i+1}^{-1} \forall i. \)
\\
$B_iB_j^{-1}=(B_iC_i^{-1})(C_iB_{i+1}^{-1})\cdots (B_{j-1}C_{j-1}^{-1})(C_{j-1}B_j^{-1})$$\in$$H$$\forall i, j$\\
$C_iC_j^{-1}=(C_iB_i^{-1})(B_iB_j^{-1})(B_jC_j^{-1})$$\in$$H$ $ \forall i, j$\\
$A_iA_j^{-1}=(A_iB_i^{-1})(B_iB_j^{-1})(B_jA_j^{-1})$$\in$$H$ $ \forall i, j$

It follows from  Theorem~\ref{thm:thmKorkmaz2} that
$H=\mod(\Sigma_g)$, completing the proof of the lemma.
\end{proof}

\begin{theorem} \label{thm:r2}
If $g\geq 21$, then the mapping class group $\mod(\Sigma_g)$ is generated by the elements
\(
R^2,B_1B_2A_5A_8C_{11}C_{14}C_{16}^{-1}C_{13}^{-1}A_{10}^{-1}A_7^{-1}B_4^{-1}B_3^{-1}. \)
 \end{theorem}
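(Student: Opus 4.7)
Let $H$ denote the subgroup of $\mod(\Sigma_g)$ generated by $R^2$ and $F_1 = F = B_1B_2A_5A_8C_{11}C_{14}C_{16}^{-1}C_{13}^{-1}A_{10}^{-1}A_7^{-1}B_4^{-1}B_3^{-1}$. The plan mirrors the strategy used in Corollaries~\ref{cor:gen1}--\ref{cor:gen8}: build a sequence of elements $F_1, F_2, F_3, \ldots \in H$ by successive conjugations by $R^{\pm 2}$, multiplications, and applications of the push identity $Wt_cW^{-1} = t_{W(c)}$; after enough steps, extract elements of the form $A_iB_i^{-1}$, $C_iB_i^{-1}$, and $C_iB_{i+1}^{-1}$ for every $i$, and conclude that $H = \mod(\Sigma_g)$ by invoking Corollary~\ref{generating}.

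The design of $F$ makes the first reduction transparent. Write $F = P\cdot Q$ with $P = B_1B_2A_5A_8C_{11}C_{14}$ and $Q = C_{16}^{-1}C_{13}^{-1}A_{10}^{-1}A_7^{-1}B_4^{-1}B_3^{-1}$. For $g\geq 21$ the six curves $b_1,b_2,a_5,a_8,c_{11},c_{14}$ are pairwise disjoint on $\Sigma_g$, so the factors of $P$ commute pairwise; similarly for $Q$. A direct computation gives
\[
R^2PR^{-2} = B_3B_4A_7A_{10}C_{13}C_{16} = Q^{-1},
\]
equivalently $R^{-2}QR^2 = P^{-1}$. Consequently
\[
(R^{-2}FR^2)\cdot F \;=\; (R^{-2}PR^2)\cdot P^{-1}\cdot P\cdot Q \;=\; (R^{-2}PR^2)\cdot Q,
\]
and symmetrically $F\cdot(R^2FR^{-2}) = P\cdot(R^2QR^{-2})$. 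Each of these is a twelve-factor element of $H$ in which the positive and negative halves now sit on index sets further apart than in $F$, so subsequent $R^{\pm 2}$-conjugations create opportunities for additional cancellation across the two halves.

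The remainder of the argument is a cascade of similar reductions. At each step one applies $R^{\pm 2}$-conjugation to some $F_k$, multiplies by a previously-produced element of $H$, and uses the push identity to replace a curve appearing in $F_k$ by its image under that element; whenever adjacent Dehn-twist factors become inverses of each other, they cancel, and the number of factors drops. Since $R^{\pm 2}$ only shifts indices by $2$, a two-twist element $X_iY_j^{-1}$ obtained in this process propagates via $R^2$-conjugation only to other shifts of the same parity. The twelve indices appearing in $F$, namely $\{1,2,3,4,5,7,8,10,11,13,14,16\}$, contain both odd and even representatives in each of the families $\{a_i\}, \{b_i\}, \{c_i\}$, and this balance is what permits the cascade to deliver two-twist elements in both parity classes; these then assemble into the full list $A_iB_i^{-1}$, $C_iB_i^{-1}$, $C_iB_{i+1}^{-1}$ for every $i$.

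The main obstacle is the combinatorial bookkeeping: every cancellation invoked has to rest on a genuine disjointness of the curves supporting the factors being commuted, and the two parity classes of indices have to be bridged using the twelve-factor structure of $F$ rather than just through $R^2$-conjugation. The hypothesis $g\geq 21$ is what guarantees that the indices $1,\ldots,16$ appearing in $F$, together with the small number of $R^{\pm 2}$-shifts introduced during the reduction, remain distinct modulo $g$ and that the required disjointness holds. Once the generating elements required by Corollary~\ref{generating} are in $H$, that corollary finishes the proof.
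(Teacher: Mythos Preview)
Your proposal correctly identifies the target (produce $A_iB_i^{-1}$, $C_iB_i^{-1}$, $C_iB_{i+1}^{-1}$ for all $i$ and invoke Corollary~\ref{generating}) and your $P$--$Q$ telescoping observation $R^2PR^{-2}=Q^{-1}$ is valid and pleasant. However, the proposal is not a proof: after that single computation you hand off to ``a cascade of similar reductions'' without ever exhibiting a single two-twist element of $H$. The entire substance of the theorem lies in that cascade, and it cannot be waved away, because the mechanism you describe is not quite the one that works.

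Concretely, the reductions in the paper (and in Corollaries~\ref{cor:gen1}--\ref{cor:gen8}) do \emph{not} proceed by ``adjacent Dehn-twist factors becoming inverses and cancelling.'' What happens instead is this: one produces a second twelve-factor element $F'$ in $H$ whose underlying twelve curves coincide with those of some $F_k$ except in a single position; the discrepancy arises from a braid-relation computation such as $F_3F_1(b_5)=a_5$ while $F_3F_1$ fixes the other eleven curves. Then $F'F_k^{-1}$ is the desired two-twist element. Your telescoping step $(R^{-2}FR^2)F=(R^{-2}PR^2)Q$ does not do this: it produces another twelve-factor element whose curve set is entirely different from that of $F$, so there is nothing to compare it to. You still need to explain, explicitly, how to manufacture two twelve-factor elements in $H$ differing in exactly one curve, and to do this separately enough times to hit both parity classes in each of the three families $\{a_i\},\{b_i\},\{c_i\}$. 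That is several nontrivial steps, each requiring a specific disjointness or single-intersection check; the hypothesis $g\ge 21$ enters precisely in those checks (for instance, the paper uses $R^4F_1R^{-4}$, which brings in indices up to $20$). Until those steps are written down, the argument is incomplete.
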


\begin{proof}

Let $F_1=B_1B_2A_5A_8C_{11}C_{14}C_{16}^{-1}C_{13}^{-1}A_{10}^{-1}A_7^{-1}B_4^{-1}B_3^{-1}$. Let us denote by
$H$ the subgroup of $\mod(\Sigma_g)$ generated by the set
$\{ R^2, F_1\}$.

Let
\[
F_2 = R^2F_1R^{-2} = B_3B_4A_7A_{10}C_{13}C_{16}C_{18}^{-1}C_{15}^{-1}A_{12}^{-1}A_9^{-1}B_6^{-1}B_5^{-1}.
\]
and \[
F_3 = F_2^{-1} = B_5B_6A_9A_{12}C_{15}C_{18}C_{16}^{-1}C_{13}^{-1}A_{10}^{-1}A_7^{-1}B_4^{-1}B_3^{-1}.
\]

Then, $F_3F_1(b_5,b_6, \cdots ,b_3)=(a_5,b_6, \cdots ,b_3)$ so that\\
$F_4=A_5B_6A_9A_{12}C_{15}C_{18}C_{16}^{-1}C_{13}^{-1}A_{10}^{-1}A_7^{-1}B_4^{-1}B_3^{-1}\in H$.

Note that $\cdots$ refers to the elements remaining fixed under the given maps.

$F_4F_3^{-1}=A_5B_5^{-1}$$\in$$H$ and then by conjugating $A_5B_5^{-1}$ with $R^2$ iteratively, we get $A_{2i+1}B_{2i+1}^{-1}$$\in$$H$ $\forall i$.

Let
\[
F_5 = R^4F_1R^{-4} = B_5B_6A_9A_{12}C_{15}C_{18}C_{20}^{-1}C_{17}^{-1}A_{14}^{-1}A_{11}^{-1}B_8^{-1}B_7^{-1}.
\]
and \begin{eqnarray*}
F_6
&=& (A_7B_7^{-1})F_5^{-1}(B_5A_5^{-1})\\
&=& A_7B_8A_{11}A_{14}C_{17}C_{20}C_{18}^{-1}C_{15}^{-1}A_{12}^{-1}A_9^{-1}B_6^{-1}A_5^{-1}.
\end{eqnarray*}

Then, $F_6F_1(a_7,b_8,a_{11}, \cdots ,b_6,a_5)=(a_7,a_8,a_{11}, \cdots ,b_6,a_5)$ so that\\
$F_7=A_7A_8A_{11}A_{14}C_{17}C_{20}C_{18}^{-1}C_{15}^{-1}A_{12}^{-1}A_9^{-1}B_6^{-1}A_5^{-1}\in H$.

$F_7F_6^{-1}=A_8B_8^{-1}$$\in$$H$ and then by conjugating $A_8B_8^{-1}$ with $R^2$ iteratively, we get $A_{2i}B_{2i}^{-1}$$\in$$H$ $\forall i$.\\
Hence, $A_iB_i^{-1}$$\in$$H$ $\forall i$.

Let
\[
F_8 = (B_{12}A_{12}^{-1})F_4 = A_5B_6A_9B_{12}C_{15}C_{18}C_{16}^{-1}C_{13}^{-1}A_{10}^{-1}A_7^{-1}B_4^{-1}B_3^{-1}.
\]

Then, $F_8F_1( \cdots ,b_{12}, \cdots )=( \cdots ,c_{11}, \cdots )$ so that\\
$F_9=A_5B_6A_9C_{11}C_{15}C_{18}C_{16}^{-1}C_{13}^{-1}A_{10}^{-1}A_7^{-1}B_4^{-1}B_3^{-1}\in H$.

$F_9F_8^{-1}=C_{11}B_{12}^{-1}$$\in$$H$ and then by conjugating $C_{11}B_{12}^{-1}$ with $R^2$ iteratively, we get $C_{2i+1}B_{2i+2}^{-1}$$\in$$H$ $\forall i$.

Let \[
F_{10} = (B_{11}A_{11}^{-1})F_7 = A_7A_8B_{11}A_{14}C_{17}C_{20}C_{18}^{-1}C_{15}^{-1}A_{12}^{-1}A_9^{-1}B_6^{-1}A_5^{-1}. 
\]

Then, $F_{10}F_1( \cdots ,b_{11}, \cdots )=( \cdots ,c_{11}, \cdots )$ so that\\
$F_{11}=A_7A_8C_{11}A_{14}C_{17}C_{20}C_{18}^{-1}C_{15}^{-1}A_{12}^{-1}A_9^{-1}B_6^{-1}A_5^{-1}$$\in$$H$.

Then, $F_{11}F_{10}^{-1}= C_{11}B_{11}^{-1}$$\in$$H$ and then, we get $C_{2i+1}B_{2i+1}^{-1}$$\in$$H$ $\forall i$.

Let
\[
F_{12} = (B_{15}C_{15}^{-1})F_4 = A_5B_6A_9A_{12}B_{15}C_{18}C_{16}^{-1}C_{13}^{-1}A_{10}^{-1}A_7^{-1}B_4^{-1}B_3^{-1}.
\]

Then, $F_{12}F_1( \cdots ,b_{15}, \cdots )=( \cdots ,c_{14}, \cdots )$ so that\\
$F_{13}=A_5B_6A_9A_{12}C_{14}C_{18}C_{16}^{-1}C_{13}^{-1}A_{10}^{-1}A_7^{-1}B_4^{-1}B_3^{-1}$$\in$$H$.

Then, $F_{13}F_{12}^{-1}= C_{14}B_{15}^{-1}$$\in$$H$ and then, we get $C_{2i}B_{2i+1}^{-1}$$\in$$H$ $\forall i$.\\
Hence, $C_iB_{i+1}^{-1}$$\in$$H$ $\forall i$.

Let
\[
F_{14} = F_7(C_{15}B_{16}^{-1}) = A_7A_8A_{11}A_{14}C_{17}C_{20}C_{18}^{-1}B_{16}^{-1}A_{12}^{-1}A_9^{-1}B_6^{-1}A_5^{-1}.
\]

Then, $F_{14}F_1(\cdots ,b_{16}, \cdots )=( \cdots ,c_{16}, \cdots )$ so that\\
$F_{15}=A_7A_8A_{11}A_{14}C_{17}C_{20}C_{18}^{-1}C_{16}^{-1}A_{12}^{-1}A_9^{-1}B_6^{-1}A_5^{-1}$$\in$$H$.

Then, $F_{15}^{-1}F_{14}= C_{16}B_{16}^{-1}$$\in$$H$ and then, we get $C_{2i}B_{2i}^{-1}$$\in$$H$ $\forall i$.\\
Hence, $C_iB_i^{-1}$$\in$$H$ $\forall i$.

It follows from  Corollary~\ref{generating} that
$H=\mod(\Sigma_g)$, completing the proof of the theorem.
\end{proof}

\begin{corollary} \label{cor:r2}
If $g$ is even and $g\geq 22$, then the mapping class group $\mod(\Sigma_g)$ is generated by two elements
of order $g/2$.
\end{corollary}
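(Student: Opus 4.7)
The plan is to combine Theorem~\ref{thm:r2} directly with Lemma~\ref{lem:order}. Since $R$ has order $g$, the element $R^2$ has order $g/\gcd(g,2)$, which equals $g/2$ precisely when $g$ is even. For $g\geq 22$ the hypothesis $g\geq 21$ of Theorem~\ref{thm:r2} holds, so I would start with the generating set $\{R^2,F_1\}$ of $\mod(\Sigma_g)$, where
\[
F_1=B_1B_2A_5A_8C_{11}C_{14}C_{16}^{-1}C_{13}^{-1}A_{10}^{-1}A_7^{-1}B_4^{-1}B_3^{-1},
\]
and replace $F_1$ by $R^2F_1$. The new pair $\{R^2,\,R^2F_1\}$ generates the same subgroup as $\{R^2,F_1\}$, because $F_1=R^{-2}\cdot(R^2F_1)$, so it still generates $\mod(\Sigma_g)$; and $R^2$ already has the desired order $g/2$.

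The main content is then to check that $R^2F_1$ also has order $g/2$. I would apply Lemma~\ref{lem:order} with the ``$R$'' of that lemma taken to be $R^2$, setting
\[
x=B_1B_2A_5A_8C_{11}C_{14},\qquad y=B_3B_4A_7A_{10}C_{13}C_{16}.
\]
The six curves appearing in $y$ are pairwise disjoint, so the corresponding Dehn twists commute, which gives $y^{-1}=C_{16}^{-1}C_{13}^{-1}A_{10}^{-1}A_7^{-1}B_4^{-1}B_3^{-1}$ and hence $F_1=xy^{-1}$. Furthermore, since $R^2$ shifts each curve index by $2$, the conjugation identity $R^2xR^{-2}=y$ holds factor by factor. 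Lemma~\ref{lem:order} then immediately yields that $R^2F_1=R^2xy^{-1}$ has order $g/2$, completing the proof.

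There is essentially no obstacle: the specific shape of $F_1$ in Theorem~\ref{thm:r2} was evidently chosen so that its ``first half'' $x$ and ``inverse-of-second-half'' $y$ are related by conjugation by $R^2$, which is exactly the hypothesis needed in Lemma~\ref{lem:order}. The corollary is therefore an immediate consequence of Theorem~\ref{thm:r2} together with the order lemma, with the only verifications being the commutativity used to rewrite $y^{-1}$ and the index-shift computation $R^2xR^{-2}=y$.
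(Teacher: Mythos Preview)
Your proposal is correct and follows essentially the same argument as the paper: take the generating set $\{R^2,F_1\}$ from Theorem~\ref{thm:r2}, replace $F_1$ by $R^2F_1$, and use Lemma~\ref{lem:order} with $x=B_1B_2A_5A_8C_{11}C_{14}$ and $y=R^2xR^{-2}=B_3B_4A_7A_{10}C_{13}C_{16}$ to conclude that $R^2F_1$ has order $g/2$. One small remark: the commutativity observation is unnecessary, since $(B_3B_4A_7A_{10}C_{13}C_{16})^{-1}=C_{16}^{-1}C_{13}^{-1}A_{10}^{-1}A_7^{-1}B_4^{-1}B_3^{-1}$ is just the usual inverse-of-a-product formula.
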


\begin{proof}
Let $H$ be the subgroup of $\mod(\Sigma_g)$ generated by the set\\ 
$\{ 
R^2, R^2B_1B_2A_5A_8C_{11}C_{14}C_{16}^{-1}C_{13}^{-1}A_{10}^{-1}A_7^{-1}B_4^{-1}B_3^{-1}\}$. \\
Then, $H=$ $\mod(\Sigma_g)$ by Theorem ~\ref{thm:r2}. 
Then, we are done by Lemma ~\ref{lem:order} since
$R^2(B_1B_2A_5A_8C_{11}C_{14})R^{-2}=B_3B_4A_7A_{10}C_{13}C_{16}$.
\end{proof}

Generalization of Theorem ~\ref{thm:r2} and Corollary ~\ref{cor:r2} is as follows:
\begin{theorem} \label{generalization}
For $k\geq 2$ and $g\geq 3k^2+4k+1$, the mapping class group $\mod(\Sigma_g)$ is generated by the elements
\(
R^k,R^kF(R^kF^{-1}R^{-k})\) where $F=B_1B_2\cdots B_kA_{2k+1}A_{3k+2}\cdots A_{k^2+2k}C_{k^2+3k+1}C_{k^2+4k+2}\cdots C_{2k^2+3k}$.
 \end{theorem}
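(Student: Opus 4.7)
The plan is to generalize the proof of Theorem~\ref{thm:r2} (the $k=2$ case) to arbitrary $k \geq 2$. I will set $F_1 = F \cdot (R^k F^{-1} R^{-k})$ so that the second generator in the statement equals $R^k F_1$, and let $H = \langle R^k, F_1 \rangle$. Since $\{R^k, R^k F_1\}$ and $\{R^k, F_1\}$ generate the same subgroup of $\mod(\Sigma_g)$, the task reduces to showing $H = \mod(\Sigma_g)$ and then checking the order of $R^k F_1$.

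Two structural features of $F$ are exploited. First, $F$ is a product of $3k$ positive Dehn twists distributed in three blocks (a $B$-block, an $A$-block, and a $C$-block), each of length $k$ and with common difference $k+1$. The spacing $k+1$ (which exceeds $k$) ensures that each block is disjoint from its own $R^k$-translate, and the gaps between blocks ensure that each block is disjoint from the $R^k$-translate of the neighboring blocks. Second, the $B$-block occupies the consecutive indices $\{1, 2, \ldots, k\}$, so it already hits every residue class modulo $k$ and hence every residue modulo $\gcd(g,k)$; this is what lets iterated conjugation by $R^k$, started from different positions in the $B$-block, reach every index on the surface. The bound $g \geq 3k^2+4k+1$ is the precise threshold past which the curves supporting $F$ and $R^k F R^{-k}$ fit on $\Sigma_g$ without wrap-around collisions among the relevant twists.

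The main argument mirrors the cascade of definitions in the proof of Theorem~\ref{thm:r2}. Setting $F_2 = R^k F_1 R^{-k}$ and $F_3 = F_2^{-1}$, the product $F_3 F_1$ acts as the identity on most reference curves because its twists almost entirely cancel in pairs; the disjointness pattern above guarantees that only one chosen reference curve is moved, say $b_{k+1} \mapsto a_{k+1}$. Writing $F_3$ as a word starting with the single twist $B_{k+1}$ followed by a suffix $Y$, conjugating $F_3$ by $F_3 F_1$ replaces $B_{k+1}$ by $A_{k+1}$ inside the expression and yields $F_4 = A_{k+1} Y \in H$. Since $F_3$ and $F_4$ share the same suffix $Y$, the quotient $F_4 F_3^{-1} = A_{k+1} B_{k+1}^{-1}$ lies in $H$. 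Iterating with $R^k$ propagates this to $A_i B_i^{-1}$ for all $i$ in one $R^k$-orbit, and restarting the construction from a different index in the $B$-block moves to the next orbit. In this way I would obtain $A_i B_i^{-1}$ for every $i$, and by analogous cascades performed with other modifications of $F_1$ (as in the proof of Theorem~\ref{thm:r2}) also $C_i B_i^{-1}$ and $C_i B_{i+1}^{-1}$ for every $i$. Corollary~\ref{generating} then concludes $H = \mod(\Sigma_g)$.

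For the order statement I would apply Lemma~\ref{lem:order} with $R^k$ playing the role of $R$, with $x = F$ and $y = R^k F R^{-k}$. The hypothesis $R^k x R^{-k} = y$ is immediate, so the lemma gives that $R^k x y^{-1} = R^k F R^k F^{-1} R^{-k} = R^k F_1$ has the same order as $R^k$, namely $g/\gcd(g,k)$. The hard part, as always in this family of arguments, will be the combinatorial bookkeeping for general $k$: one must carefully verify that in the products $F_3 F_1$, and in each later pair entering the cascade, the Dehn twist supports do in fact cancel or commute in the required manner, producing exactly one single-curve swap at each step. Tracking this uniformly in $k$ — rather than checking a fixed small case as in Theorem~\ref{thm:r2} — and confirming that the bound $g \geq 3k^2+4k+1$ is sufficient to forbid every wrap-around intersection, will be the principal obstacle.
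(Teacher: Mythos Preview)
Your proposal is correct and follows essentially the same approach as the paper, which likewise reduces to showing $H=\langle R^k,F_1\rangle$ contains $A_iB_i^{-1}$, $C_iB_i^{-1}$, $C_iB_{i+1}^{-1}$ for all $i$ via a cascade of $R^k$-conjugates and single-curve swaps, then invokes Corollary~\ref{generating}. Two small corrections: the $B$-block $B_1,\ldots,B_k$ has common difference $1$, not $k+1$ (only the $A$- and $C$-blocks are spaced by $k+1$), and the first swap in the cascade occurs at index $2k+1$ rather than $k+1$ (compare $b_5\mapsto a_5$ in the $k=2$ case); also, the order computation via Lemma~\ref{lem:order} belongs to Theorem~\ref{thm:3}, not to the present statement.
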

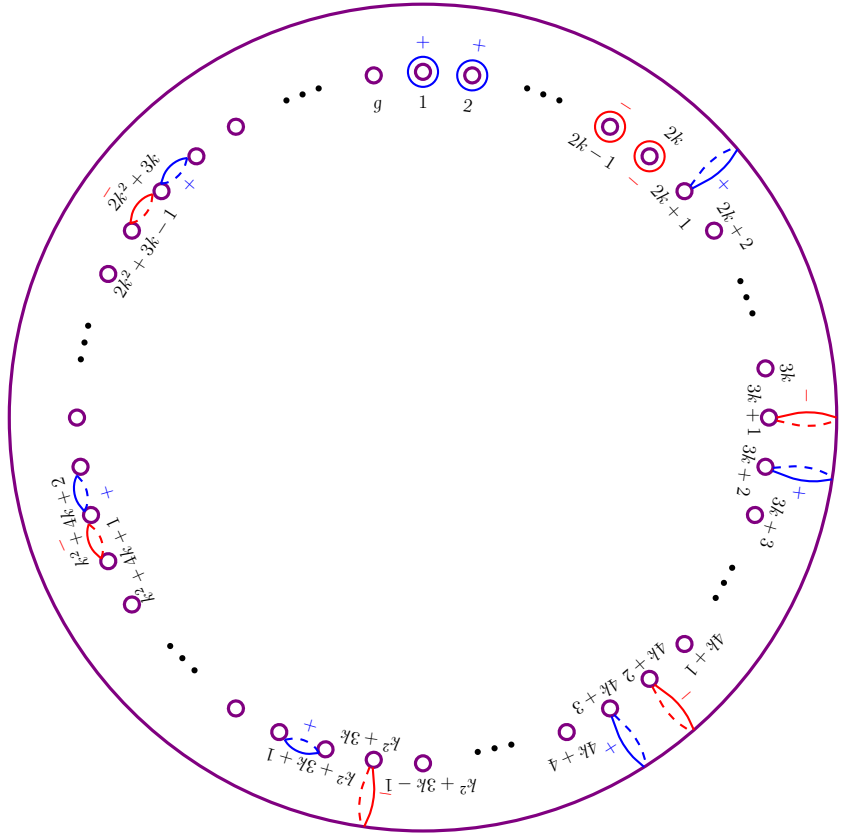
\begin{figure}
\begin{tikzpicture}[scale=1.0]
\begin{scope} [xshift=0cm, yshift=0cm]
 \draw[very thick, violet] (0,0) circle [radius=5.5cm];
 \draw[very thick, violet] (0,4.6) circle [radius=0.1cm]; 
 \draw[very thick, violet, rotate=-360/44] (0,4.6) circle [radius=0.1cm]; 
 
 \draw[very thick, violet, rotate=-360/44*4] (0,4.6) circle [radius=0.1cm];  
 \draw[very thick, violet, rotate=-360/44*5] (0,4.6) circle [radius=0.1cm];  
 \draw[very thick, violet, rotate=-360/44*6] (0,4.6) circle [radius=0.1cm];  
 \draw[very thick, violet, rotate=-360/44*7] (0,4.6) circle [radius=0.1cm]; 
 
 \draw[very thick, violet, rotate=-360/44*10] (0,4.6) circle [radius=0.1cm];  
 \draw[very thick, violet, rotate=-360/44*11] (0,4.6) circle [radius=0.1cm];  
 \draw[very thick, violet, rotate=-360/44*12] (0,4.6) circle [radius=0.1cm];  
 \draw[very thick, violet, rotate=-360/44*13] (0,4.6) circle [radius=0.1cm]; 
 
 \draw[very thick, violet, rotate=-360/44*16] (0,4.6) circle [radius=0.1cm];  
 \draw[very thick, violet, rotate=-360/44*17] (0,4.6) circle [radius=0.1cm];  
 \draw[very thick, violet, rotate=-360/44*18] (0,4.6) circle [radius=0.1cm];  
 \draw[very thick, violet, rotate=-360/44*19] (0,4.6) circle [radius=0.1cm]; 
 
 \draw[very thick, violet, rotate=-360/44*22] (0,4.6) circle [radius=0.1cm]; 
 \draw[very thick, violet, rotate=-360/44*23] (0,4.6) circle [radius=0.1cm];  
 \draw[very thick, violet, rotate=-360/44*24] (0,4.6) circle [radius=0.1cm];  
 \draw[very thick, violet, rotate=-360/44*25] (0,4.6) circle [radius=0.1cm];  
 \draw[very thick, violet, rotate=-360/44*26] (0,4.6) circle [radius=0.1cm]; 
  
 \draw[very thick, violet, rotate=-360/44*29] (0,4.6) circle [radius=0.1cm]; 
 \draw[very thick, violet, rotate=-360/44*30] (0,4.6) circle [radius=0.1cm];  
 \draw[very thick, violet, rotate=-360/44*31] (0,4.6) circle [radius=0.1cm];  
 \draw[very thick, violet, rotate=-360/44*32] (0,4.6) circle [radius=0.1cm];  
 \draw[very thick, violet, rotate=-360/44*33] (0,4.6) circle [radius=0.1cm]; 
 
 \draw[very thick, violet, rotate=-360/44*36] (0,4.6) circle [radius=0.1cm]; 
 \draw[very thick, violet, rotate=-360/44*37] (0,4.6) circle [radius=0.1cm];  
 \draw[very thick, violet, rotate=-360/44*38] (0,4.6) circle [radius=0.1cm];  
 \draw[very thick, violet, rotate=-360/44*39] (0,4.6) circle [radius=0.1cm];  
 \draw[very thick, violet, rotate=-360/44*40] (0,4.6) circle [radius=0.1cm]; 
 
 \draw[very thick, violet, rotate=360/44] (0,4.6) circle [radius=0.1cm]; 
 \draw[thick, blue, rotate=-360/44*26, rounded corners=6pt] (-1.25,1.4+3.1)--(-1.0, 1.6+3.1)--(-0.77,1.5+3.1);  
 \draw[thick, blue, dashed,  rotate=-360/44*26, rounded corners=6pt] (-1.25,1.4+3.1)--(-1.0, 1.3+3.1)--(-0.77,1.5+3.1);

  \draw[thick, blue, rotate=-360/44*33, rounded corners=6pt]  (-1.25,1.4+3.1)--(-1.0, 1.6+3.1)--(-0.77,1.5+3.1);  
 \draw[thick, blue, dashed,  rotate=-360/44*33, rounded corners=6pt]  (-1.25,1.4+3.1)--(-1.0, 1.3+3.1)--(-0.77,1.5+3.1); 
  \draw[thick, red, rotate=-360/44*32, rounded corners=6pt]  (-1.25,1.4+3.1)--(-1.0, 1.6+3.1)--(-0.77,1.5+3.1);  
 \draw[thick, red, dashed,  rotate=-360/44*32, rounded corners=6pt]  (-1.25,1.4+3.1)--(-1.0, 1.3+3.1)--(-0.77,1.5+3.1); 
 
   \draw[thick, blue, rotate=-360/44*40, rounded corners=6pt]  (-1.25,1.4+3.1)--(-1.0, 1.6+3.1)--(-0.77,1.5+3.1); 
 \draw[thick, blue, dashed,  rotate=-360/44*40, rounded corners=6pt]  (-1.25,1.4+3.1)--(-1.0, 1.3+3.1)--(-0.77,1.5+3.1); 
  \draw[thick, red, rotate=-360/44*39, rounded corners=6pt]  (-1.25,1.4+3.1)--(-1.0, 1.6+3.1)--(-0.77,1.5+3.1);  
 \draw[thick, red, dashed,  rotate=-360/44*39, rounded corners=6pt]  (-1.25,1.4+3.1)--(-1.0, 1.3+3.1)--(-0.77,1.5+3.1);   
 
 \draw[thick, blue, rotate=-360/44] (0,4.6) circle [radius=0.2cm]; 
\draw[thick, red, rotate=-360/44*5] (0,4.6) circle [radius=0.2cm]; 
 \draw[thick, blue, rotate=0] (0,4.6) circle [radius=0.2cm]; 
\draw[thick, red, rotate=-360/44*4] (0,4.6) circle [radius=0.2cm]; 
 \draw[thick, blue, rotate=-360/44*6, rounded corners=6pt] (0.04,4.7)--(0.14, 5.15)--(0.04,5.5);  
 \draw[thick, blue, dashed, rotate=-360/44*6, rounded corners=6pt]  (0.0,4.7)--(-0.1, 5.15)--(0.0,5.5); 
 
 \draw[thick, dashed, red, rotate=-360/44*11, rounded corners=6pt] (0.04,4.7)--(0.14, 5.15)--(0.04,5.5);  
 \draw[thick, red, rotate=-360/44*11, rounded corners=6pt] (0.0,4.7)--(-0.1, 5.15)--(0.0,5.5);
  \draw[thick, blue, rotate=-360/44*12, rounded corners=6pt] (0.04,4.7)--(0.14, 5.15)--(0.04,5.5);  
 \draw[thick, blue, dashed, rotate=-360/44*12, rounded corners=6pt]  (0.0,4.7)--(-0.1, 5.15)--(0.0,5.5);   
 
  \draw[thick, dashed, red, rotate=-360/44*17, rounded corners=6pt] (0.04,4.7)--(0.14, 5.15)--(0.04,5.5);  
 \draw[thick, red, rotate=-360/44*17, rounded corners=6pt] (0.0,4.7)--(-0.1, 5.15)--(0.0,5.5);
  \draw[thick, blue, rotate=-360/44*18, rounded corners=6pt] (0.04,4.7)--(0.14, 5.15)--(0.04,5.5);  
 \draw[thick, blue, dashed, rotate=-360/44*18, rounded corners=6pt]  (0.0,4.7)--(-0.1, 5.15)--(0.0,5.5);
 
 \draw[thick, dashed, red, rotate=-360/44*23, rounded corners=6pt] (0.04,4.7)--(0.14, 5.15)--(0.04,5.5);  
 \draw[thick, red, rotate=-360/44*23, rounded corners=6pt] (0.0,4.7)--(-0.1, 5.15)--(0.0,5.5);
 
  \node[scale=0.6, blue] at (0,5.0) {$+$};
  \node[scale=0.6, blue, rotate=-360/44] at (0.75,4.95) {$+$};
  \node[scale=0.6, red, rotate=-360/44*4] at (2.7,4.15) {$-$};
  \node[scale=0.6, red, rotate=-360/44*5] at (2.8,3.15) {$-$};
  \node[scale=0.6, blue, rotate=-360/44*6] at (4.0,3.1) {$+$};
  \node[scale=0.6, red, rotate=-360/44*11] at (5.1,0.3) {$-$};
  \node[scale=0.6, blue, rotate=-360/44*12] at (5.0,-1.0) {$+$};
  \node[scale=0.6, red, rotate=-360/44*17] at (3.5,-3.7) {$-$};
  \node[scale=0.6, blue, rotate=-360/44*18] at (2.5,-4.4) {$+$};
  \node[scale=0.6, red, rotate=-360/44*23] at (-0.5,-5.0) {$-$};
  \node[scale=0.6, blue, rotate=-360/44*24] at (-1.5,-4.1) {$+$};
  \node[scale=0.6, red, rotate=-360/44*30] at (-4.8,-1.7) {$-$};
  \node[scale=0.6, blue, rotate=-360/44*31] at (-4.2,-1.0) {$+$};
  \node[scale=0.6, red, rotate=-360/44*37] at (-4.2,3.0) {$-$};
  \node[scale=0.6, blue, rotate=-360/44*38] at (-3.1,3.1) {$+$};
  
\end{scope}

  	\node[scale=2, rotate=-360/44*2.5] at (1.6,4.3) {...};
  	\node[scale=2, rotate=-360/44*8.5] at (4.3,1.6) {...};
  	\node[scale=2, rotate=-360/44*14.5] at (4.0,-2.2) {...};
  	  	\node[scale=2, rotate=-360/44*20.5] at (0.95,-4.4) {...};  	
  	  	\node[scale=2, rotate=-360/44*27.5] at (-3.2,-3.2) {...};  	
  	  	\node[scale=2, rotate=-360/44*34.5] at (-4.5,1.0) {...};
  	  	\node[scale=2, rotate=-360/44*41.5] at (-1.6,4.3) {...};
  	
	 \node[scale=0.6, rotate=360/44] at (-0.6,4.15) {$g$};	
	 \node[scale=0.6] at (0.0,4.2) {$1$};
	 \node[scale=0.6, rotate=-360/44] at (0.6,4.15) {$2$};
	 
	 \node[scale=0.6, rotate=-360/44*4] at (2.25,3.55) {$2k-1$};
	 \node[scale=0.6, rotate=-360/44*5] at (3.35,3.75) {$2k$};
	 \node[scale=0.6, rotate=-360/44*6] at (3.3,2.8) {$2k+1$};
	 \node[scale=0.6, rotate=-360/44*7] at (4.15,2.55) {$2k+2$};
	 
	 \node[scale=0.6, rotate=-360/44*10] at (4.85,0.6) {$3k$};
	 \node[scale=0.6, rotate=-360/44*11] at (4.4,0.1) {$3k+1$};
	 \node[scale=0.6, rotate=-360/44*12] at (4.3,-0.7) {$3k+2$};
	 \node[scale=0.6, rotate=-360/44*13] at (4.65,-1.4) {$3k+3$};
	 
	 \node[scale=0.6, rotate=-360/44*16] at (3.75,-3.15) {$4k+1$};
	 \node[scale=0.6, rotate=-360/44*17] at (2.95,-3.25) {$4k+2$};
	 \node[scale=0.6, rotate=-360/44*18] at (2.3,-3.7) {$4k+3$};
	 \node[scale=0.6, rotate=-360/44*19] at (2.0,-4.5) {$4k+4$};
	 
	 \node[scale=0.6, rotate=-360/44*22] at (0.1,-4.9) {$k^2+3k-1$};
	 \node[scale=0.6, rotate=-360/44*23] at (-0.7,-4.3) {$k^2+3k$};
	 \node[scale=0.6, rotate=-360/44*24] at (-1.5,-4.65) {$k^2+3k+1$};
	 
	 \node[scale=0.6, rotate=-360/44*30] at (-3.9,-1.9) {$k^2+4k+1$};
	 \node[scale=0.6, rotate=-360/44*31] at (-4.7,-1.4) {$k^2+4k+2$};
	 
	 \node[scale=0.6, rotate=-360/44*37] at (-3.7,2.2) {$2k^2+3k-1$};
	 \node[scale=0.6, rotate=-360/44*38] at (-3.85,3.15) {$2k^2+3k$};
	 
\end{tikzpicture}
 	\caption{Generator for Theorem~\ref{thm:3}.}
\end{figure}

\begin{proof}
We define an algorithm to prove the desired result.\\
Let $F=B_1B_2\cdots B_iA_{2k+1}A_{3k+2}\cdots A_{k^2+2k}C_{k^2+3k+1}C_{k^2+4k+2}\cdots C_{2k^2+3k}$
and $F_1=F(R^kF^{-1}R^{-k})$. Let us denote by
$H$ the subgroup of $\mod(\Sigma_g)$ generated by the set
$\{ R^k, F_1\}$.

A) Use conjugation of $F_1$ with $R^k, R^{2k} , \cdots , R^{k^2}$ with proper multiplications to get $A_{k+1}B_{k+1}^{-1}$$\in$$H$, $A_{k+2}B_{k+2}^{-1}$$\in$$H$, $\cdots$ , $A_{2k-1}B_{2k-1}^{-1}$$\in$$H$, $A_{2k}B_{2k}^{-1}$$\in$$H$, respectively. Hence, $A_iB_i^{-1}$$\in$$H$ $\forall i$.

B) Follow the next $k$ steps.

1) Use conjugation of $F_1$ with $R^{kl}$ for some positive integers $l$'s with proper multiplications to get $C_{ik+1}B_{ik+1}^{-1}$$\in$$H$ and $C_{ik+1}B_{ik+2}^{-1}$$\in$$H$ $\forall i$.

2) Use conjugation of $F_1$ with $R^{kl}$ for some positive integers $l$'s with proper multiplications to get $C_{ik+2}B_{ik+2}^{-1}$$\in$$H$ and $C_{ik+2}B_{ik+3}^{-1}$$\in$$H$ $\forall i$.\\
$\cdots $

k) Use conjugation of $F_1$ with $R^{kl}$ for some positive integers $l$'s with proper multiplications to get $C_{ik}B_{ik}^{-1}$$\in$$H$ and $C_{ik}B_{ik+1}^{-1}$$\in$$H$ $\forall i$.\\
Hence, $C_iB_i^{-1}$$\in$$H$ and $C_iB_{i+1}^{-1}$$\in$$H$ $\forall i$.

It follows from  Corollary~\ref{generating} that
$H=\mod(\Sigma_g)$, completing the proof of the theorem.

See Theorem ~\ref{thm:r2} for an example usage of the algorithm.
\end{proof}

Now, we prove Theorem~\ref{thm:3}.
\begin{proof}
For $k\geq 2$ and $g\geq 3k^2+4k+1$, let $H$ be the subgroup of $\mod(\Sigma_g)$ generated by the set 
$\{R^k, R^kF(R^kF^{-1}R^{-k})\}$.
Then, $H =$ $\mod(\Sigma_g)$ by Theorem ~\ref{generalization}. 
Hence, we are done by Lemma ~\ref{lem:order} since the orders of $R^k$ and $R^kF(R^kF^{-1}R^{-k})$ are $g/d$ where $d$ is the greatest common divisor of $g$ and $k$.
If $k=1$, we are done by Theorem ~\ref{thm:2}.
\end{proof}


%
%
%
%


\end{document}